\newtheorem{theorem}{Theorem}
\newtheorem{corollary}{Corollary}[theorem]
\newtheorem{lemma}[theorem]{Lemma}
\newtheorem{proposition}[theorem]{Proposition}
\newtheorem{remark}[theorem]{Remark}
\newcommand{\mizh}{{\underline{\pmb{z}}_{h}} }
\newcommand{\misigmah}{{\underline{\pmb{\sigma}}_{h}} }
\newcommand{\misigmac}{{\underline{\pmb{\sigma}}}}
\newcommand{\misigmag}{{\underline{\pmb{\hat{\sigma}}}_{h}}}
\newcommand{\esigma}{\underline{\pmb{e_{\sigma}}}}
\newcommand{\dsigma}{\underline{\pmb{\delta_{\sigma}}}}
\newcommand{\esigmag}{\underline{\pmb{e_{\hat{\sigma}}}}}
\newcommand{\fAsigma}{\pmb{\Lambda}^{\mathcal{A}(\underline{\pmb{\sigma}}_{h})}(\boldsymbol{x})}
\newcommand{\fAsigmawx}{\pmb{\Lambda}^{\mathcal{A}(\underline{\pmb{\sigma}}_{h})}}
\newcommand{\fAdsigma}{\pmb{\Lambda}^{\mathcal{A}\underline{\pmb{\delta_{\sigma}}}}(\boldsymbol{x})}
\newcommand{\fAdsigman}{\pmb{\Lambda}^{\mathcal{A}\underline{\pmb{\delta_{\sigma}}}}}
\newcommand{\fAdsigmawx}{\pmb{\Lambda}^{\mathcal{A}
\underline{\pmb{\delta_{\sigma}}}}}
\newcommand{\fAesigma}{\pmb{\Lambda}^{\mathcal{A}\underline{\pmb{e_{\sigma}}}}(\boldsymbol{x})}
\newcommand{\fAesigman}{\pmb{\Lambda}^{\mathcal{A}\underline{\pmb{e_{\sigma}}}}}
\newcommand{\fAesigmawx}{\pmb{\Lambda}^{\mathcal{A}\underline{\pmb{e_{\sigma}}}}}
\newcommand{\mirh}{{\pmb{r}_{h}}}
\newcommand{\miuh}{{\pmb{u}_{h}}}
\newcommand{\miuc}{\pmb{u}}
\newcommand{\mirg}{\pmb{\hat{r}}_{h}}
\newcommand{\miug}{\pmb{\hat{u}}_{h}}
\newcommand{\eu}{\pmb{e_{u}}}
\newcommand{\du}{\pmb{\delta_{u}}}
\newcommand{\eug}{\pmb{e_{\hat{u}}}}
\newcommand{\piwD}{\pmb{\Pi_{W}}}
\newcommand{\proMD}{\pmb{\mathcal{P}_{M}}}
\newcommand{\proI}{\pmb{\mathcal{I}}}
\newcommand{\piA}{\underline{\pmb{\Pi_{A}}}}
\newcommand{\pivD}{\underline{\pmb{\Pi}}^{\pmb{D}}}
\newcommand{\pcero}{\underline{\pmb{P}}_{0}}
\newcommand{\miroh}{\underline{\pmb{\rho}}_{h}}
\newcommand{\miroc}{\underline{\pmb{\rho}}}
\newcommand{\erho}{\underline{\pmb{e_{\rho}}}}
\newcommand{\drho}{\underline{\pmb{\delta_{\rho}}}}
\newcommand{\frho}{\pmb{\Lambda}^{\underline{\pmb{\rho}}_{h}}(\boldsymbol{x})}
\newcommand{\frhowx}{\pmb{\Lambda}^{\underline{\pmb{\rho}}_{h}}}
\newcommand{\fdrho}{\pmb{\Lambda}^{\underline{\pmb{\delta_{\rho}}}}(\boldsymbol{x})}
\newcommand{\fdrhon}{\pmb{\Lambda}^{\underline{\pmb{\delta_{\rho}}}}}
\newcommand{\fdrhowx}{\pmb{\Lambda}^{\underline{\pmb{\delta_{\rho}}}}}
\newcommand{\ferho}{\pmb{\Lambda}^{\underline{\pmb{e_{\rho}}}}(\boldsymbol{x})}
\newcommand{\ferhon}{\pmb{\Lambda}^{\underline{\pmb{e_{\rho}}}}}
\newcommand{\ferhowx}{\pmb{\Lambda}^{\underline{\pmb{e_{\rho}}}}}
\newcommand{\mipsi}{\underline{\pmb{\psi}}}
\newcommand{\dpsi}{\underline{\pmb{\delta_{\psi}}}}
\newcommand{\dxi}{\underline{\pmb{\delta_{\xi}}}}
\newcommand{\mixi}{\underline{\pmb{\xi}}}
\newcommand{\miphi}{\pmb{\phi}}
\newcommand{\dphi}{\pmb{\delta_{\phi}}}
\newcommand{\miA}{\mathcal{A}}
\newcommand{\miv}{\underline{\pmb{v}}}
\newcommand{\mieta}{\underline{\pmb{\eta}}}
\newcommand{\mivar}{\underline{\pmb{\varsigma}}}
\newcommand{\miw}{\pmb{w}}
\newcommand{\mimu}{\pmb{\mu}}
\newcommand{\mib}{b_{k}}
\newcommand{\migama}{\underline{\pmb{\gamma}}}
\newcommand{\Cero}{\pmb{0}}
\newcommand{\Cerot}{\underline{\pmb{0}}}
\newcommand{\exb}{\boldsymbol{\bar{x}}}
\newcommand{\ex}{\boldsymbol{x}}
\newcommand{\lx}{l(\boldsymbol{x})}
\newcommand{\tx}{\boldsymbol{t}(\boldsymbol{x})}
\newcommand{\Teta}{\Theta(\misigmac,\miroc)}
\newcommand{\trian}{_{\mathcal{T}_{h}}}
\newcommand{\Trian}{{\mathcal{T}_{h}}}
\newcommand{\dtrian}{_{\partial\mathcal{T}_{h}}}
\newcommand{\Dtrian}{{\partial\mathcal{T}_{h}}}
\newcommand{\midiv}{\nabla\hspace{-0.05cm}\cdot }
\newcommand{\normal}{\pmb{n}}
\newcommand{\nT}{\pmb{n}^{t}}
\newcommand{\nore}{\pmb{n}_{e}}
\newcommand{\gamah}{_{\Gamma_{h}}}
\newcommand{\sumE}{ \sum_{e \in \mathcal{E}_{h}^{\partial}}}
\newcommand{\gtildeh}{\pmb{\widetilde{g}}_{h}}
\newcommand{\gtilde}{\pmb{\widetilde{g}}}
\newcommand{\mif}{\pmb{f}}
\newcommand{\mig}{\pmb{g}}
\newcommand{\ipd}[2]{\langle #1,#2 \rangle}
\newcommand{\ipe}[2]{\langle #1,#2 \rangle_{e}}
\newcommand{\ipg}[2]{\langle #1,#2 \rangle_{\Gamma_{h}}}
\newcommand{\ipt}[2]{( #1,#2 )_{\mathcal{T}_{h}}}
\newcommand{\ipb}[2]{\langle #1,#2 \rangle_{\partial \mathcal{T}_{h}}}
\newcommand{\ipf}[2]{\langle #1,#2 \rangle_{\partial \mathcal{T}_{h}\setminus\Gamma_{h}}}
\newcommand{\ipk}[2]{( #1,#2 )_{K}}
\newcommand{\ipdk}[2]{\langle #1,#2 \rangle_{\partial K}}
\newcommand{\miteta}{\pmb{\theta}}
\newcommand{\dn}[1]{\partial_{\pmb{n}}(#1 \pmb{n})}
\newcommand{\psn}{\partial_{\pmb{n}}}
\newcommand{\miT}[2]{\mathbb{T}^{#1}_{#2}}
\newcommand{\maxE}{\max_{e \in \mathcal{E}_{h}^{\partial}}}
\newcommand{\norm}[3]{\| #1 \|_{#2}^{#3}}
\newcommand{\norme}[2]{\| #1 \|_{e}^{#2}}
\newcommand{\normel}[2]{\| #1 \|_{e,l^{-1}}^{#2}}
\newcommand{\normell}[2]{\| #1 \|_{e,l}^{#2}}
\newcommand{\normea}[2]{\| #1 \|_{e,\tau}^{#2}}
\newcommand{\normKe}[2]{\| #1 \|_{K^{e}}^{#2}}
\newcommand{\normk}[2]{\| #1 \|_{K}^{#2}}
\newcommand{\normKex}[2]{\| #1 \|_{K_{ext}^{e}}^{#2}}
\newcommand{\normKext}[2]{\| #1 \|_{K_{ext}^{e},(h^{\perp})^{2}}^{#2}}
\newcommand{\normDhc}[2]{\| #1 \|_{\Omega_{h}^{c},(h^{\perp})^{2}}^{#2}}
\newcommand{\normDh}[2]{\| #1 \|_{\Omega_{h}}^{#2}}
\newcommand{\nDhA}[2]{\| #1 \|_{\Omega_{h},\miA}^{#2}}
\newcommand{\normO}[2]{\| #1 \|_{\Omega}^{#2}}
\newcommand{\normgh}[2]{\| #1 \|_{\Gamma_{h}}^{#2}}
\newcommand{\normgl}[2]{\| #1 \|_{\Gamma_{h},l^{-1}}^{#2}}
\newcommand{\normgpl}[2]{\| #1 \|_{\Gamma_{h},l}^{#2}}
\newcommand{\normgll}[2]{\| #1 \|_{\Gamma_{h},l^{2}}^{#2}}
\newcommand{\normgml}[2]{\| #1 \|_{\Gamma_{h},l^{-2}}^{#2}}
\newcommand{\normga}[2]{\| #1 \|_{\Gamma_{h},\tau}^{#2}}
\newcommand{\normgal}[2]{\| #1 \|_{\Gamma_{h},\tau^{2},l^{2}}^{#2}}
\newcommand{\normGl}[3]{\| #1 \|_{\Gamma_{h},l^{#2}}^{#3}}
\newcommand{\normGh}[3]{\| #1 \|_{\Gamma_{h},(h^{\bot})^{#2}}^{#3}}
\newcommand{\Cext}{C_{ext}^{e}}
\newcommand{\Cinv}{C_{inv}^{e}}
\newcommand{\Ctr}{C_{tr}^{e}}
\newcommand{\he}{h_{e}^{\bot}}
\newcommand{\He}{H_{e}^{\bot}}
\newcommand{\alfa}{\tau}
\newcommand{\CA}{\dfrac{1}{2\mu}}
\newcommand{\miR}{R}
\newcommand{\mir}{r_{e}}
\newcommand{\miVh}[1]{\underline{\pmb{V}}_{h}#1}
\newcommand{\miV}[1]{\underline{\pmb{V}}(#1)}
\newcommand{\miWh}[1]{\pmb{W}_{h}#1}
\newcommand{\miAh}[1]{\underline{\pmb{A}}_{h}#1}
\newcommand{\miAc}[1]{\underline{\pmb{A}}(#1)}
\newcommand{\miAt}{{\underline{\pmb{\widetilde{A}}}}(K)}
\newcommand{\miAs}[2]{\pmb{A}_{#1}(#2)}
\newcommand{\miAS}[1]{\underline{\pmb{AS}}(#1)}
\newcommand{\miB}[1]{\underline{\pmb{B}}(#1)}
\newcommand{\miBh}{\underline{\mathcal{B}}_{h}}
\newcommand{\miMh}[1]{\pmb{M}_{h}#1}
\newcommand{\miM}[1]{\pmb{M}(#1)}
\newcommand{\miMD}[1]{\pmb{M}(#1)}
\newcommand{\Eh}[1]{\mathcal{E}_{h}^{#1}}
\newcommand{\Ltv}[1]{\pmb{L}^{2}(#1)}
\newcommand{\Ltt}[1]{\underline{\pmb{L}}^{2}(#1)}
\newcommand{\Hv}[2]{\pmb{H}^{#1}(#2)}
\newcommand{\Ht}[2]{\underline{\pmb{H}}^{#1}(#2)}
\newcommand{\Hdiv}[1]{\underline{\pmb{H}}({\rm div};#1)}
\newcommand{\Hg}[1]{\pmb{H}^{1/2}(#1)}
\newcommand{\Pkt}[1]{\underline{\mathbf{P}}_{k}(#1)}
\newcommand{\Pt}[2]{\underline{\mathbf{P}}_{#1}(#2)}
\newcommand{\Pv}[2]{\mathbf{P}_{#1}(#2)}
\newcommand{\Pcerok}[1]{\underline{\mathbf{P}}_{0}(#1)}
\newcommand{\Punok}[1]{\underline{\mathbf{P}}_{1}(#1)}
\newcommand{\Pkv}[1]{\mathbf{P}_{k}(#1)}
\newcommand{\Pke}[1]{P_{k}(#1)}
\newcommand{\vertiii}[1]{{\left\vert\kern-0.25ex\left\vert\kern-0.25ex\left\vert #1 
    \right\vert\kern-0.25ex\right\vert\kern-0.25ex\right\vert}}
\newcommand{\bignorm}{\vertiii{(\esigma,\eu-\eug,\gtilde-\gtildeh)}}
\title{A high order unfitted  hybridizable discontinuous Galerkin method for linear elasticity}
\author[1]{Juan Manuel C\'ardenas}
\author[2,3]{Manuel Solano}
\affil[1]{Department of Mathematics, Simon Fraser University, Vancouver, Canada.{\it email: jcardena@sfu.ca}}
\affil[2]{Departamento de Ingenier\'ia Matem\'atica, Facultad de Ciencias F\'isicas y Matem\'aticas, Universidad de Concepci\'on, Concepci\'on, Chile.}
\affil[3]{Centro de Investigaci\'on 
en Ingenier\'ia Matem\'atica (CI$^2$MA), Universidad de Concepci\'on, Concepci\'on, Chile.{\it email: msolano@ing-mat.udec.cl}}
\date{\today}
\begin{document}

\maketitle

\begin{abstract}
This work analyzes a high order hybridizable discontinuous Galerkin (HDG) method for the linear elasticity problem in a domain not necessarily polyhedral. The domain is approximated by a polyhedral computational domain where the HDG solution can be computed. The introduction of the rotation as one of the unknowns allows us to use the gradient of the displacements to obtain an explicit representation of the boundary data in the computational domain. The boundary data is transferred from the true boundary to the computational boundary by line integrals, where the integrand depends on the Cauchy stress tensor and the rotation. Under closeness assumptions between the computational and true boundaries, the scheme is shown to be well-posed and optimal error estimates are provided even in the nearly incompressible. Numerical experiments in two-dimensions are presented.
\end{abstract}

{\bf Key words}: Hybridizable discontinuous Galerkin (HDG), unfitted methods, transfer path method, linear elasticity.

\noindent
{\bf Mathematics Subject Classifications (2020)}: 65N15, 65N30.

\section{Introduction}\label{sec;introduction}


This work introduces and analyses a hybridizable discontinuous Galerkin (HDG) method for the isotropic linear elasticity problem 
\begin{subequations}\label{Equation1a}
\begin{align}
\miA{}\misigmac{}-\underline{\pmb{\epsilon}}(\miuc{})& =0\hspace{0.5cm}  \text{in} \hspace{0.2cm}  \Omega,\\
\midiv{}\misigmac{} & =\mif{} \hspace{0.5cm}  \text{in} \hspace{0.2cm}  \Omega,\\
\miuc{} & =\mig \hspace{0.5cm} \text{on} \hspace{0.2cm}  \Gamma,
\end{align}
\end{subequations}
where $\Omega \in \mathbb{R}^{n}, n \in \lbrace 2,3\rbrace$ is a bounded domain, not necessarily polyhedral, with boundary $\Gamma$  compact, Lipschitz and piecewise $\mathcal{C}^2$. Here, $\miuc$ is the unknown displacement, $\underline{\pmb{\epsilon}}(\miuc):=\frac{1}{2}(\nabla\miuc+\nabla^{t}\miuc)$ is the strain tensor, $\misigmac$ is the Cauchy stress tensor, $\mif \in \Ltv{\Omega}$ is a source term, $\mig \in \Hg{\Gamma}$ is a given boundary data,  $\miA^{-1}$ is the elasticity tensor determined by the Hooke's Law, that is, for a tensor $\mixi$,
\begin{align}
\miA^{-1}(\mixi)= 2\mu \mixi + \lambda {\rm tr}(\mixi)\underline{\pmb{I}} 
\quad \textrm{and} \quad
\miA(\mixi)=\frac{1}{2\mu}\mixi - \frac{\lambda}{2\mu(n\lambda + 2\mu)}{\rm tr}(\mixi)\underline{\pmb{I}}, \label{Atensor}
\end{align}
where, $\underline{\pmb{I}}$ denotes the identity tensor, $\displaystyle {\rm tr}(\mixi):=\sum_{i=1}^{n}\mixi_{ii}$, $\lambda$ and $\mu$ are the Lam\'e constant such that 
$ \mu := \dfrac{E}{2(1+\nu)}$  and $ \lambda := \dfrac{E \nu}{(1+\nu)(1-2\nu)}$,
with $E$ the Young's modulus and $\nu$ the Poisson ratio.

One of the first HDG schemes for the linear elasticity problem has been proposed in \cite{Soon} for the formulation \eqref{Equation1a} in polyhedral domains, where the symmetry of the stress tensor is imposed exactly. There, numerical experiments showed the performance of the method. Later, the authors in \cite{FuCoSt2015} theoretically proved optimal order of convergence for the displacement and suboptimal for the other variables. They also provided numerical experiments showing that their error estimates are sharp. In addition, also for the formulation in \eqref{Equation1a}, \cite{QiShSh2018} devised a new HDG scheme by considering polynomials of degree $k$, $k+1$ and $k$ for the approximation of the stress, displacement and trace of the displacements, respectively. The symmetry of the stress tensor is also imposed on the discrete spaces and the numerical trace of the stress is suitable defined in order to be able to use the standard $L^2$-projection in the error analysis, instead of the HDG-projection \cite{diffusion}. Recently, two new theoretical tools have been developed to devise and analyze HDG method for elasticity problems. One of them is the M-decomposition for devising  superconvergent HDG methods  \cite{CoFu2017} and the other is related to the construction of a tailored projection that provides way of analyzing a family of HDG methods \cite{DuSA2020}. Recently, the authors in \cite{Fu} proposed a novel gradient-robust and locking-free HDG method.

On the other hand, the work in \cite{Ke} analyzed an HDG method where the symmetry is imposed weakly by introducing the rotation $\miroc{}(\miuc{})= (\nabla \miuc - \nabla^{t} \miuc)/2$ as an additional unknown. In this setting, \eqref{Equation1a} can be written as 
\begin{subequations}\label{Equation2a}
\begin{align}
\miA{}\misigmac{}-\nabla \miuc{}+ \miroc{}& =0\hspace{0.5cm}  \text{in $\Omega$,}\label{Equation2_1}\\
\midiv{}\misigmac{} & =\mif{} \hspace{0.5cm} \text{in $\Omega$,}\\
\miuc{} & =\mig \hspace{0.5cm} \text{on $\Gamma$}.
\end{align}
\end{subequations}

During the last decade, HDG methods to handle curved domains via extension from polyhedral subdomain have been developed for a variety of problems such as Darcy \cite{CQS,CS}, Stokes \cite{Vargas} and Oseen \cite{Vargas2} equations, convection-diffusion problem \cite{CoSol2} and elliptic interface problems \cite{Patrick}. All these contributions are based on approximating $\Omega$ by a polyhedral subdomain $\Omega_h$ and transferring the boundary condition from $\Gamma$ to the computational boundary $\Gamma_{h}$ by line integration of the extrapolated discrete approximation of the gradient. That is why this methodology is often called {\it transferring technique}. The key feature of the partial differential equation (PDE) that makes possible to use this approach, is to have the gradient of $\boldsymbol{u}$ as one of the unknowns. Therefore, the formulation \eqref{Equation2a} is well suited to this transferring technique since $\nabla \boldsymbol{u}$ is written as $\miA{}\misigmac{}+ \miroc{}$.

In this work, we analyze the resulting HDG scheme for \eqref{Equation2a} posed on a curved domain $\Omega$, combined with the aforementioned transferring technique to approximate the boundary data on the computational domain. Even though this type of unfitted HDG method has been analyzed before \cite{CQS,Vargas,Vargas2}, it has not been studied for elasticity problems, where the main challenges that we address in this manuscript rely in three aspects. The first one is the presence of two of the unknowns, the Cauchy stress tensor and the rotation, in the line integrals used to transfer the boundary data. In our previous work only one of the unknowns is being integrating along the transferring segments. The second aspect is the task of obtaining estimates independent of the value of $\lambda$. Finally, as it is usual in unfitted methods, closeness conditions between $\Gamma_h$ and $\Gamma$ must be assumed in order to to have a wellposed and optimal scheme. In the context of elasticity problems, ideally one would like those conditions to be independent of $\lambda$. As we will see in Section \ref{ExUn}, we were able to get rid of $\lambda$ in all the closeness assumption, except in one of them which requires the distance between $\Gamma_h$ and $\Gamma$ to satisfy $dist(\Gamma_h,\Gamma)h^{1/2}\lesssim \lambda^{-1}$. However, the numerical experiments reported in Section \ref{ChapNE} suggest that this restriction could be relaxed since for the nearly incompressible examples the method still performs optimally.

To fix ideas, let $\ex \in \Gamma_{h}$ and associate to it a point $ \exb \in \Gamma$. The precise specification of $ \exb$ will be introduced in Section \ref{secTP}. We set  $\lx := |\exb-\ex|$ and $\tx$ the unit tangent vector of the segment joining $\ex$ and $\exb$, then integrating \eqref{Equation2_1} between $\ex$ and $\exb$ we deduce  the identity
\begin{align}\label{expansion}
\miuc(\ex)=\mig(\exb)-\int_{0}^{\lx}\hspace{-0.2cm}(\miA\misigmac + \miroc)(\ex + s \tx)\tx ds,
\end{align}
since $\miuc(\exb)=\mig(\exb)$. Defining $\gtilde(\ex):=\miuc(\ex)$, we obtain the following expression for the boundary data $\gtilde$ in $\Gamma_{h}$:
\begin{align}
\gtilde(\ex) = \mig(\exb) - \int_{0}^{\lx}\hspace{-0.2cm}(\miA\misigmac + \miroc)(\ex + s \tx) \tx ds .\label{defg}
\end{align}
Then, we solve the following problem in the computational subdomain $\Omega_{h}$:
\begin{subequations}\label{Equation3a}
\begin{align}
\miA{}\misigmac{}-\nabla \miuc{}+ \miroc{}& =0\hspace{0.5cm}  \text{in $\Omega_{h}$,}\\
\midiv{}\misigmac & =\mif \hspace{0.5cm} \text{in $\Omega_{h}$,}\\
\miuc{} & =\gtilde \hspace{0.5cm} \text{on $\Gamma_{h}:=\partial \Omega_{h}$.}
\end{align}
\end{subequations}

As we mentioned above, the idea of transferring the boundary data from $\Gamma$ to $\Gamma_{h}$ by integrating $\nabla\miuc$ along a segment, was originally introduced and analysed in a one-dimensional diffusion problem \cite{Cockburn9}, where an HDG method was employed. Later, \cite{CS} generalized the method to the two-dimensional case and developed the implementation tools. In the same direction, \cite{CoSol2} numerically showed that the method performs optimaly in convection-diffusion equations. Also, this technique was used in an exterior diffusion problem in a curved domain \cite{Cockburn10}. There, the authors coupled the boundary element method to an HDG scheme and experimentally showed that the order of convergence of the resulting method is optimal. Then \cite{CQS} analysed the method proposed in \cite{CS} using the projections-based error analysis of HDG methods \cite{diffusion}. In fact, \cite{CQS} provided the theoretical framework to analyze this type of techniques of transferring the boundary data. Recently, this data transferring technique have been generalized to other type of boundary conditions \cite{CaSo2021} and transmission conditions over dissimilar and non-matching grids \cite{10.1093/imanum/drab059}.

Let us briefly comment on previous work related to unfitted methods for elasticity problems. In the context of discontinuous Galerkin methods, one of the first unfitted methods for linear and nonlinear elasticity was introduced by \cite{Lew2} based on the immerse DG method proposed in \cite{Lew1}. The approximations functions are piecewise polynomials of degree one and allowed to be discontinuous in those elements intersecting the interface. The resulting method is optimal and does not suffer from boundary locking.

In the context of HDG method, recently an unfitted eXtended HDG(X-HDG) method has been introduced for the elasticity problem \cite{han2021interfaceboundaryunfitted}. In the X-HDG method \cite{xHDG1,xHDG2,xHDG3}, the domain is also immerse in a background mesh and piecewise polynomials functions are employed in the discrete spaces. The local discrete spaces associated to those elements cut by the interface are enriched in order to correctly capture the behavior of the solution across the interface. The authors in \cite{han2021interfaceboundaryunfitted} considered polynomials of degree $k$ and $k-1$ to approximate the displacements and stress, respectively; and polynomials of degree $k$ for the numerical traces. Optimal $L^2$-error estimates were proved.

During the last five years, a close related method to our technique has been introduced: the shifted boundary method (SBM) \cite{Scovazzi1,Scovazzi2} and recently extended to problems in solid mechanics \cite{Scovazzi3}. The main idea of SBM is to properly construct the boundary data in the computational boundary that is ``shifted" from the true boundary. That construction is based on a Taylor expansion of the solution near the boundary and on a Nitsche approach to imposed weakly the boundary data. In our transferring technique, the data in the computational boundary is also properly constructed but using the PDE instead of a Taylor series. Actually, by taking a closer look to \eqref{expansion}, somehow we are expanding $\miuc{}$ around a point $ \exb \in \Gamma$ and the functions that have been integrated are actually differential operators acting on $\miuc{}$. In contrast with a Taylor expansion, we also observe that the expansion in \eqref{expansion} is exact (no residual term), since it comes from the PDE. At the discrete level there will be a residual,  because $\misigmac$ and $\miroc$ will be approximated.

The rest of this manuscript is organized as follows. Section \ref{method} describes the proposed unfitted HDG method, whereas in Section \ref{ChapAnEr} wellposedness of the scheme is shown. The  error estimates are stated in Section \ref{sec:error} and the corresponding proofs are provided in Section \ref{proofs}. Numerical experiments validating the theory are presented in Section \ref{ChapNE}.

\section{The method}\label{method}
\subsection{Computational domain}\label{secNC}
Given $h>0$, we denote by $\Omega_{h}$ a polyhedral domain contained in $\Omega$ with boundary $\Gamma_h=\partial \Omega_h$. We also denote by $\mathcal{T}_{h}$ a triangulation of $\Omega_h$ made of simplices $K$ of diameter $h_{K}$ and  outward unit normal  $\pmb{n}_K$.  When there is no confusion, we just write $\pmb{n}$ instead of $\pmb{n}_K$.  By simplicity we assume that the family of triangulations $\{\mathcal{T}_h\}_{h>0}$ does not have hanging nodes and is uniformly shape regular, i.e., there exists a constant $\gamma$ such that $h_{K}\leq \gamma \varrho_{K}$, for all $K\in \mathcal{T}_h$ and $h>0$. Here, $\varrho_{K}$ is the radius of the biggest ball included in $K$ and the maximum of the diameters $h_{K}$ is at most $h$.

We call $e$ an interior face if there are two elements $K^{+}$ and $K^{-}$ in $\mathcal{T}_{h}$ such that $e = \partial K^{+}\cap \partial K^{-}.$ Similarly, $e$ is a boundary face if there is an element $K \in \mathcal{T}_{h}$ such that $e = \partial K \cap \Gamma_{h}.$ Let $\mathcal{E}_{h}^{0}$ be the set of inferior faces of $\mathcal{T}_{h}, \mathcal{E}_{h}^{\partial}$ the set of faces at the boundary and $\mathcal{E}_{h}:=\mathcal{E}_{h}^{0}\cup \mathcal{E}_{h}^{\partial}$. Given an face $e\in \mathcal{E}_h^\partial$, $\pmb{n}_e$ denotes its unit normal vector pointing outwards $\Omega_h$, also denoted by just  $\pmb{n}$ when there is no confusion.

\subsection{Transferring paths and extrapolation regions}\label{secTP}
As we mentioned in the introduction, given a point $\ex \in \Gamma_{h}$ we need to specify a point $\exb \in \Gamma$ in order to transfer the boundary data from $\boldsymbol{\bar{x}}$ to $\boldsymbol{x}$ according to \eqref{defg}. In principle, $\exb$ could be any point of $\Gamma$ {\it{close enough}} to $\ex.$ The segment joining $\ex$ and $\exb$ will be referred as {\it{transferring path}} associated to $\ex$. We denote by $\lx$ and $\tx$ the length and unit tangent vector, respectively, of the transferring path associated to $\ex$ (see Figure \ref{fig:Tpa_x} for an illustration). From a practical point of view, this transferring path is required to satisfy three conditions: (1) $\exb$ and $\ex$ must be as close as possible, (2) two transferring paths must not intersect each other before terminating at $\Gamma$ and (3) a transferring path must not intersect the interior of the computational domain $\Omega_{h}$. The authors in \cite{CS}, for the two dimensional case, proposed an algorithm to construct a family of transferring paths satisfying the above mentioned condition. The construction in three dimensions can be done using the same ideas. In practice we only need to compute the transferring paths of the quadrature points of all boundary edges (see Figure \ref{fig:Tpa_b}). Another possibility is to consider $\boldsymbol{\bar{x}}$ as the closest point projection of  $\boldsymbol{x}$ onto $\Gamma$, as long it is unique. Actually, the analysis that we present in this work, is independent of how the transferring paths are constructed, if the hypothesis regarding the closedness between $\Gamma_h$ and $\Gamma$ are satisfied, namely the set of Assumptions C presented in Section \ref{ExUn}.

Now, let us introduce the notation associated to the set $\Omega_{h}^{c}:= \Omega \setminus \overline{\Omega_{h}}$. For a face $e \in\mathcal{E}_{h}^{\partial},$ we denote by $K^{e}$ the only element of $\mathcal{T}_{h}$ having $e$ as a face. We define  
$\widetilde{K}_{ext}^{e}:= \lbrace \ex +s \tx: 0 \leq s \leq \boldsymbol{l(x)} , \boldsymbol{x} \in e \rbrace$.
In Figure \ref{fig:Kext} we observe an example of a region $\widetilde{K}_{ext}^{e}$. The subscript {\it{ext}} in $\widetilde{K}_{ext}^{e}$ is introduced to indicate that in those regions the discrete solution will be {\it{extrapolated}} as follows.
 Let $p$ a polynomial defined on $K^{e}.$ The extrapolation of $p$ from $K^{e}$ to $\widetilde{K}_{ext}^{e}$, denoted by $E_{h}(p)$, is defined by $E_{h}(p)(y):= p|_{K^{e}}(y), \forall y \in \widetilde{K}_{ext}^{e}$. To simplify notation, from now on we will just write $p(y)$ instead of $E_{h}(p)(y)$ for $y \in \widetilde{K}_{ext}^{e}$. The same notation will be used for the extrapolation of tensor- and vector-valued polynomial functions.

\begin{center}
\begin{figure}[ht!]
    \begin{subfigure}[t]{0.5\textwidth}
         \centering
        \includegraphics[scale=0.5]{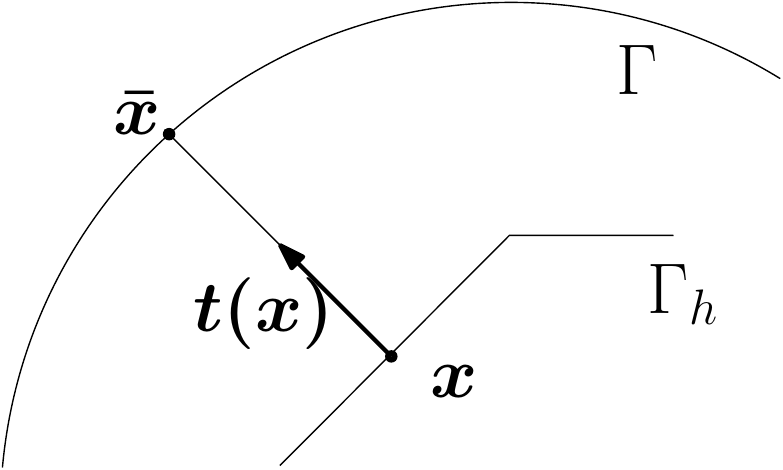}
        \caption{Transferring path associated to $\ex.$}
        \label{fig:Tpa_x}
    \end{subfigure}\hfill
    \begin{subfigure}[t]{0.5\textwidth}
         \centering
        \includegraphics[width=2.4cm,height=3.4cm]{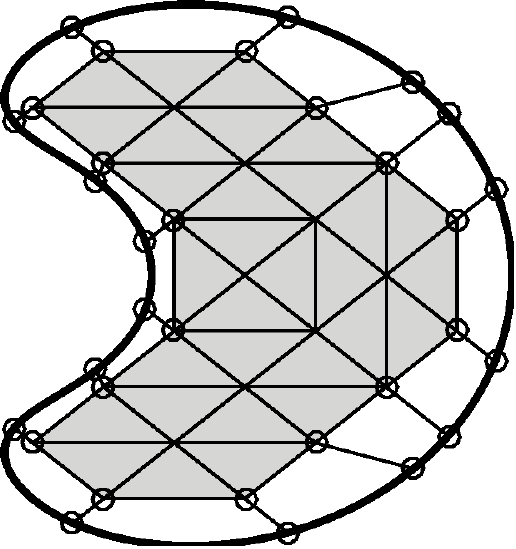}
        \caption{Transferring paths associated to the boundary vertices.}
        \label{fig:Tpa_a}
    \end{subfigure}\\
     \begin{subfigure}[t]{0.5\textwidth}
     \centering
        \includegraphics[width=2.5cm,height=3.5cm]{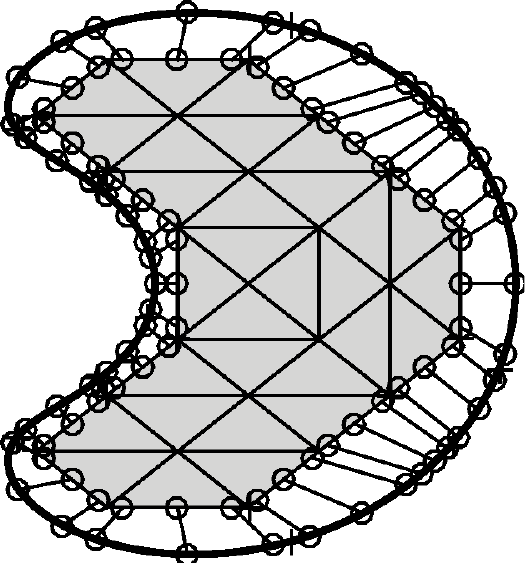}
        \caption{Transferring paths associated to the boundary quadrature points.}
     \label{fig:Tpa_b}
    \end{subfigure}\hfill
\begin{subfigure}[t]{0.5\textwidth}
     \centering
\includegraphics[scale=0.45]{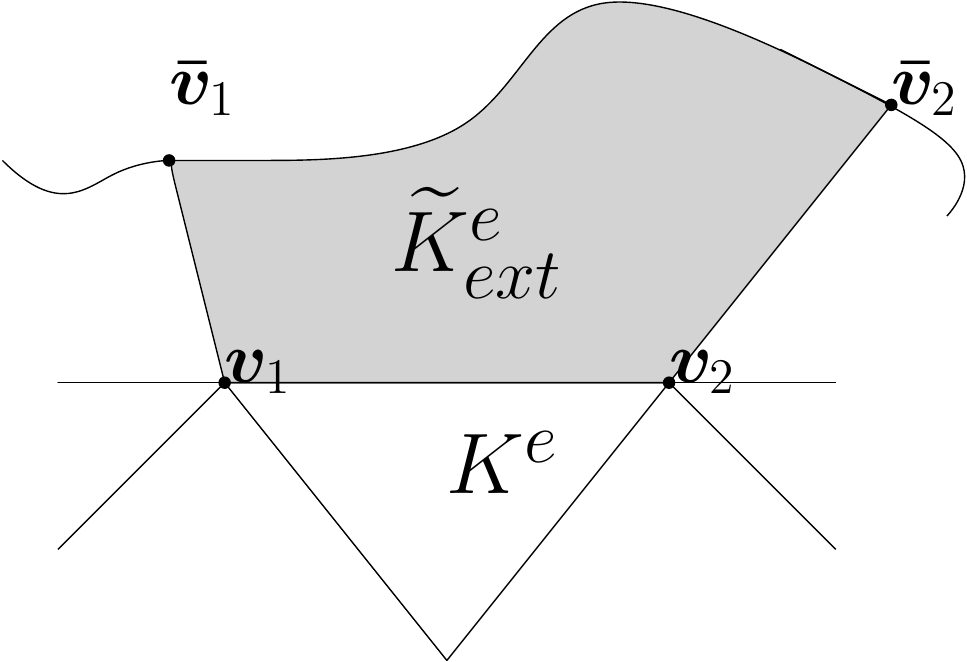}
\caption{Example of $\widetilde{K}_{ext}^{e}$.}\label{fig:Kext}
\end{subfigure}    
    \caption{Examples of the transferring paths and extrapolation regions.}
\end{figure}\label{fig:Tpa}
\end{center}

\subsection{Additional notation}\label{sec:noration}

For tensor-, vector- and scalar-valued functions we use the symbols $\mieta=(\eta_{ij})_{i,j=1}^n$, $\pmb{\eta}=(\eta_{i})_{i=1}^n$ and $\eta$, respectively. The superscript $t$ in a  vector or a tensor refers to its transpose. We define 
$
(\mieta,\mivar)\trian:=\sum\limits_{i,j=1}^{n}(\eta_{ij},\varsigma_{ij})\trian$,
$(\pmb{\eta},\pmb{\varsigma})\trian:=\sum\limits_{i=1}^{n}(\eta_{i},\varsigma_{i})\trian$ and
$(\eta,\varsigma)\trian:=\sum\limits_{K \in \mathcal{T}_{h}}(\eta,\varsigma)_{K}
$,
where $(\eta,\varsigma)_{K}$ denotes the standard $L^2$-inner product on $K$. Similarly, we write 
$
\ipd{\pmb{\eta}}{\pmb{\varsigma}}\dtrian:=\sum\limits_{i=1}^{n}\ipd{\eta_{i}}{\varsigma_{i}}\dtrian $
and $
\ipd{\eta}{\varsigma}\dtrian:=\sum\limits_{K \in \Trian}\ipdk{\eta}{\varsigma}
$,
where $\ipd{\eta}{\varsigma}_{\partial K}$ is the $L^2(\partial K)$-inner product. We also use the standard notation for Sobolev spaces and the associated norms and seminorms. We define $\norm{\eta}{D,w}{}:= \norm{\sqrt{w}\eta}{L^{2}(D)}{}$ and simply write $ \norm{\eta}{D}{}$ when $w=1$. On the set of boundary faces we consider the norm
$\displaystyle\norm{\eta}{h}{}:=\left( \sum_{K \in \Trian } h_{K}\norm{\eta}{\partial K}{2} \right)^{1/2}$. 

On the other hand,  in the two-dimensional case, the operator $\nabla \times$ applied to $\mieta$ and $\pmb{\nu}$ is defined as:
\[ \nabla \times\mieta:=
\begin{pmatrix}
-\partial_{y}\eta_{11} +\partial_{x} \eta_{12}\\
-\partial_{y}\eta_{21} +\partial_{x} \eta_{22}
\end{pmatrix}
\hspace{0.5cm}\textrm{and} \hspace{0.5cm}
\nabla \times\pmb{\nu}:=
\begin{pmatrix}
-\partial_{y}\nu_{1}   & \partial_{x} \nu_{1}\\
-\partial_{y}\nu_{2} & \partial_{x} \nu_{2}
\end{pmatrix},
\]
respectively; whereas in three dimensions,
\[ \nabla \times \mieta:=
\begin{pmatrix}
\nabla\times(\eta_{11},\tau_{12},\eta_{13})\\
\nabla\times(\eta_{21},\tau_{22},\eta_{23})\\
\nabla\times(\eta_{31},\tau_{32},\eta_{33})
\end{pmatrix}.
\]
Finally, $\nabla$ will denote the usual gradient or broken-gradient, depending on the context. Similarly for $\midiv$.


\subsection{The HDG method}\label{ChapHDG}
First of all, we recall the discrete spaces of the HDG method proposed in \cite{Ke} for simplices. Let $K\in \Trian$. We define $\Pke{K}$ as the set of polynomials of degree at most $k$ over $K$, $\Pkv{K}:=[\Pke{K}]^{n}$, $\Pkt{K}:=[\Pke{K}]^{n\times n}$ and $\miAc{K}:=[\miAs{i,j}{K}]^{n\times n}$ such that 
\[ \miAs{i,j}{K}=
	\begin{cases} 
       \Pke{K} & \text{if} \hspace{0.3cm} i < j,  \\
       0 &  \text{if}  \hspace{0.3cm}i = j, \\
       -\Pke{K} &\text{if} \hspace{0.3cm} i > j.
    \end{cases}
\]
We notice that $\miAc{K}$ is contained in $\miAS{K}:=\lbrace \mieta \in \Ltt{K}:\mieta+\mieta^{T}=\underline{\pmb{0}}\rbrace$. In addition, the polynomial space $\miB{K}$ associated to bubble functions is defined as follows. In the two dimensional case $ \miB{K}:= \nabla\times((\nabla\times\miAc{K})\mib{})$, where $\mib:=\prod_{e \subset \partial K} \eta_{e}$ and $\eta_e$ is the barycentric coordinate associated to the edge $e$ of $K$. 
In the three dimensional case, 
$ \miB{K}:= \nabla\times((\nabla\times\miAc{K})\underline{\pmb{b}}_{k})$,
with
\[ \underline{\pmb{b}}_{k}:= \sum_{e \subset \partial K} \Big[\prod_{e' \subset \partial K \setminus \lbrace e \rbrace}\eta_{e'}\Big]\nabla\eta_{e}\otimes\nabla\eta_{e}.
\]

For an element $K$ we define the local space $
\miV{K} :=\Pkt{K}+\miB{K}$. We notice that
$
\miV{K} = \Pkt{K} +  \nabla \times ((\nabla \times \miAc{K}))\mib{})
	    = \Pkt{K}\oplus \nabla \times ((\nabla \times \miAt )\mib{})$,
where $\miAt = \miAc{K} \cap \widetilde{\underline{\textbf{P}}}_{k}(K)$ and $\widetilde{\underline{\textbf{P}}}_{k}(K)$ is the set of polynomials of degree exactly $k$.

\begin{remark}\label{rmk:B} Observe that any function $\miv$ lying in the space $\miBh :=\lbrace \mieta \in \Ltt{\Omega_{h}}: \mieta |_{K} \in \miB{K}, K \in \Trian \rbrace$ is such that $\midiv \miv |_{K} = 0 \,, \forall K \in \Trian$ and $\miv\normal |_{e}=0 \,, \forall e \in \mathcal{E}_{h}.$
\end{remark}

The method seeks an approximation  $(\misigmah,\miuh,\miroh,\miug)$ of the exact solution $(\misigmac,\miuc,\miroc,\miuc|_{\mathcal{E}_{h}})$ in the finite-dimensional space $\miVh\times\miWh\times\miAh\times\miMh \subset \Ltt{\Omega_{h}}\times\Ltv{\Omega_{h}}\times\miAS{\Omega_{h}}\times\Ltv{\Eh{}}$ given by 
\begin{subequations}\label{spaces}
\begin{align}
\miVh&=\lbrace\miv \in \Ltt{\Trian}: \miv|_{K}\in \miV{K},\hspace{0.3cm}\forall K \in \Trian \rbrace, \\
\miWh&=\lbrace\miw \in \Ltv{\Trian}: \miw|_{K}\in\Pkv{K},\hspace{0.3cm} \forall K \in \Trian \rbrace, \\
\miAh&=\lbrace\mieta \in \Ltt{\Trian}: \mieta|_{K}\in \miAc{K},\hspace{0.3cm} \forall K \in \Trian \rbrace, \\
\miMh&=\lbrace\mimu \in \Ltv{\Eh{}}: \mimu|_{e}\in \mathbf{P}_{k}(e),\hspace{0.3cm} \forall e \in \Eh{} \rbrace.
\end{align}
\end{subequations}
\begin{subequations}\label{sq}
The approximation $(\misigmah,\miuh ,\miroh ,\miug )$ is the solution of the following linear system:
\begin{align}
\ipt{\miA\misigmah}{\miv}+\ipt{\miuh}{\midiv \miv}+\ipt{\miroh}{\miv}-\ipb{\miug}{\miv \normal}&=0,\label{seqa}\\
\ipt{\misigmah}{\nabla \miw}-\ipb{\misigmag \normal}{\miw} & = -\ipt{\mif}{\miw},\label{seqb}\\
\ipt{\misigmah}{\mieta} & = 0,\label{seqc}\\
\ipf{\misigmag\normal}{\mimu} & = 0,\label{seqd}\\
\ipg{\miug}{\mimu} &= \ipg{\gtildeh}{\mimu},\label{seqe}
\end{align}
for all $(\miv,\miw,\mieta,\mimu) \in \miVh \times \miWh \times \miAh \times \miMh ,$ where 
\begin{align}
\misigmag \normal&= \misigmah\normal-\tau(\miuh-\miug)\hspace{1cm} \text{on} \hspace{0.5cm} \Dtrian , \label{seqf} \\
\gtildeh(\ex)&=\pmb{g}(\exb)-\int_{0}^{\lx}\hspace{-0.2cm}(\miA \misigmah + \miroh)(\ex +s\tx)\tx ds \label{seqg}
\end{align}
\end{subequations}
and $\tau$ is a positive stabilization parameter defined on $\Dtrian$, that we assume constant on each face. We observe  that \eqref{seqg} is a discrete version of \eqref{defg}, where we recall that $\miA \misigmah $ and $\miroh$ are understood as the local extrapolation as mentioned at the end of Section \ref{secTP}.\\

%
%


\section{Wellposedness}\label{ChapAnEr}
\subsection{Preliminaries}\label{secpre}
As we will see through this section, the analysis of the method requires several technicalities and most of the estimates involve a large number of terms. In order to keep the proofs as clean as possible, we assume the vector $\tx$ of the transferring paths associated to $\ex \in e, e \in \Eh{\partial},$ to be normal to $e$, i.e., $\tx =\normal_{e}$. In the general case where $\tx$ is not necessarily equal to $\normal_{e}$, as it usually happens, terms of the type  $\max\limits_{\ex \in e}\tx\cdot\normal_{e}$ and $\max\limits_{\ex \in e}(\tx\cdot\normal_{e})^{-1}$ would appear in the estimates and the results that we will prove hold also true if $\tx\cdot\normal_{e}$ is close enough to one as shown by \cite{Vargas} in the context of Stokes flows. We emphasize that this assumption is only made to simplify the analysis and we consider that it is not crucial to explain the theory. Moreover, in the numerical experiments we will consider examples where transferring paths are not normal to the boundary edges and will see that results are optimal. Following the discussion in Section \ref{secTP}, for each $e \in \Eh{\partial}$, let us define  
\[ K_{ext}^{e}:= \lbrace \boldsymbol{x}+s \pmb{n}_{e}: 0 \leq s \leq \boldsymbol{l(x)} , \boldsymbol{x} \in e \rbrace. \] 
In addition, we define auxiliary constants that will be used in the analysis. Let $K^{e}$ the element with face $e$. We denote by $\he$ the distance between the vertex, opposite to $e$, and the plane determined by $e$ and set $\He:=\max\limits_{\boldsymbol{x}\in e} l(\boldsymbol{x})$. In order to quantify how close is $\Gamma_h$ from $\Gamma$, related to the meshsize, we define the ratios  
\begin{align}\label{ratios}
\mir := \He / \he \qquad \textrm{and} \quad \miR{}:=\maxE\mir.
\end{align}

We consider the norms: 
\begin{align*}
\normgl{\eta}{} & := \left\lbrace \sum_{e \in \Eh{\partial}} \norm{\eta}{e,l^{-1}}{2}\right\rbrace^{1/2},
\hspace{0.1cm}\normKext{\eta}{}{}  := \left\lbrace \sum_{e \in \mathcal{E}^{\partial}_{h}} (h_{e}^{\bot})^{2}\norm{\eta}{K_{ext}^{e}}{2}\right\rbrace^{1/2},
\end{align*}
where, according to the notation in Section \ref{secTP}, $\norm{\eta}{e,l^{-1}}{}=\norm{l^{-1/2}\eta}{e}{}.$
Finally, we define the constants:
\begin{align}\label{Cext}
\Cext := \frac{1}{\sqrt{\mir}} \sup_{\pmb{\eta} \in \miV{K^{e}}\normal_{e}\setminus\lbrace \boldsymbol{0} \rbrace} 
\frac{\normKex{\pmb{\eta} }{}}{\normKe{\pmb{\eta} }{}}, \hspace{0.5cm}
\Cinv := \he  \sup_{\pmb{\eta}  \in \miV{K^{e}}\normal\setminus\lbrace \boldsymbol{0} \rbrace} 
\frac{\normKe{\partial_{\nore }\pmb{\eta} }{}}{\normKe{\pmb{\eta} }{}},
\end{align}
which are independent of $h$, but depend on the polynomial degree $k$ as shown in Lemma A.2 of \cite{CQS}. 

On the other hand, it is useful to state some estimates from previous work that will be used in the proofs. For any face $e \in \Eh{\partial}$, any point $\boldsymbol{x}$ lying on $e$ and any smooth enough function tensor $\miv$  given in $K^{e}_{ext}$, we define the auxiliary function 
\begin{align}\label{Lambda}
\pmb{\Lambda}^{\miv}(\ex):=\frac{1}{\lx}\int_{0}^{\lx}[\miv(\ex + s\nore)-\miv(\ex)]\nore ds.
\end{align}
If $\miv \in \Pkt{K^{e}}$, Lemma 5.2 of \cite{CQS} applied to each of row of  $\miv$ implies: 
\begin{align}\label{auxlem1} 
\norm{\pmb{\Lambda}^{\miv}}{e,l}{} & \leq \frac{1}{\sqrt{3}}\mir{}^{3/2}\Cext{}\Cinv{}\normKe{\miv}{}.
\end{align}
Moreover, for a symmetric and positive definite tensor $\mathcal{D}$ and $\miv \in \Pkt{K^{e}}$, it holds
\begin{align}\label{auxlemD} 
\norm{\mathcal{D}\pmb{\Lambda}^{\miv}}{e,l}{} = \norm{\pmb{\Lambda}^{\mathcal{D}\miv}}{e,l}{}  & \leq \frac{1}{\sqrt{3}}\mir{}^{3/2}\Cext{}\Cinv{}\normKe{\mathcal{D}\miv}{}.
\end{align}

Let $K \in \Trian$ having a face $e$. For $p \in \Pke{K}$ we recall the discrete trace inequality 
\begin{eqnarray}\label{auxlemTr}
 \norm{p}{L^{2}(e)}{} \leq \Ctr{}h_{e}^{-1/2}\norm{p}{L^{2}(K)}{},
 \end{eqnarray}
where $\Ctr >0$ is independent of $h$.

In addition, given a symmetric and positive definite tensor $\mathcal{D}$ and a region $D\subset \mathbb{R}^n$, we define the the norm $\norm{\misigmac}{D,\mathcal{D}}{}:=(\mathcal{D} \misigmac,\misigmac)_D^{1/2}$. Similarly, for $D\subset \mathbb{R}^{n-1}$, we let $\norm{\misigmac}{D,\mathcal{D}}{}:=\langle\mathcal{D} \misigmac,\misigmac\rangle_D^{1/2}$.

\begin{lemma}\label{auxlemA3} Let $\misigmac \in \Ltt{D}$. It holds
$$ \norm{\misigmac}{D,\miA^2}{}\leq \Big(\frac{1}{2\mu}\Big)^{1/2}\norm{\misigmac}{D,\miA}{} \leq
\frac{1}{2\mu}\norm{\misigmac}{D}{} 
\leq \Big(\frac{1}{2\mu}\Big)^{3/2}\norm{\misigmac}{D,\miA^{-1}}{}.$$
Moreover, if $\misigmac$ is antisymmetric, these inequalities become equalities.
\end{lemma}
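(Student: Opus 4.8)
The inequalities are really a statement about the linear algebra of the tensor $\miA$ acting on $\Ltt{D}$, combined with monotonicity of the pointwise quadratic forms. The key observation is that $\miA$, as defined in \eqref{Atensor}, is self-adjoint and positive definite on symmetric tensors, and that it acts on the trace-free part and the spherical part of a tensor separately. Concretely, writing any $\misigmac$ as $\misigmac^{\mathrm{dev}} + \frac{1}{n}{\rm tr}(\misigmac)\underline{\pmb{I}}$, one computes that $\miA$ multiplies $\misigmac^{\mathrm{dev}}$ by $\frac{1}{2\mu}$ and multiplies the spherical part by $\frac{1}{n\lambda+2\mu}$. Since $\lambda\ge 0$, the eigenvalue $\frac{1}{n\lambda+2\mu}$ of $\miA$ lies in $(0,\frac{1}{2\mu}]$, so $\miA \preceq \frac{1}{2\mu}\underline{\pmb{I}}$ in the sense of quadratic forms on symmetric tensors; on antisymmetric tensors one has ${\rm tr}(\misigmac)=0$ and $\miA\misigmac = \frac{1}{2\mu}\misigmac$ exactly, which is where the equality statement comes from.

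First I would record the pointwise spectral facts: for a symmetric tensor field $\misigmac$, at almost every point $x\in D$ the eigenvalues of the linear map $\mixi \mapsto \miA\mixi$ restricted to symmetric tensors are $\frac{1}{2\mu}$ (with multiplicity equal to $\dim$ of trace-free symmetric tensors) and $\frac{1}{n\lambda+2\mu}$ (the spherical direction). From $0 < \frac{1}{n\lambda+2\mu} \le \frac{1}{2\mu}$ it follows that as quadratic forms $0 \preceq \miA \preceq \frac{1}{2\mu}\underline{\pmb{I}}$, hence also $\miA^2 \preceq \frac{1}{2\mu}\miA$ and $\miA \preceq \frac{1}{2\mu}\miA^{-1}$ (multiply the operator inequality $\miA \preceq \frac{1}{2\mu}\underline{\pmb{I}}$ on both sides by $\miA^{1/2}$, resp. by $\miA^{-1/2}$, which is legitimate since $\miA$ is positive definite and these all commute). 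Then I would integrate over $D$: $\norm{\misigmac}{D,\miA^2}{2} = (\miA^2\misigmac,\misigmac)_D \le \frac{1}{2\mu}(\miA\misigmac,\misigmac)_D = \frac{1}{2\mu}\norm{\misigmac}{D,\miA}{2}$, giving the first inequality after taking square roots; similarly $\norm{\misigmac}{D,\miA}{2} = (\miA\misigmac,\misigmac)_D \le \frac{1}{2\mu}(\misigmac,\misigmac)_D = \frac{1}{2\mu}\norm{\misigmac}{D}{2}$, giving the second; and $\norm{\misigmac}{D}{2}=(\misigmac,\misigmac)_D \le \frac{1}{2\mu}(\miA^{-1}\misigmac,\misigmac)_D = \frac{1}{2\mu}\norm{\misigmac}{D,\miA^{-1}}{2}$, which rearranges into the third inequality $\frac{1}{2\mu}\norm{\misigmac}{D}{} \le \bigl(\frac{1}{2\mu}\bigr)^{3/2}\norm{\misigmac}{D,\miA^{-1}}{}$. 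The case $D\subset\mathbb{R}^{n-1}$ is identical with $(\cdot,\cdot)_D$ replaced by $\langle\cdot,\cdot\rangle_D$.

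For the last sentence, if $\misigmac$ is antisymmetric then ${\rm tr}(\misigmac)=0$, so \eqref{Atensor} gives $\miA\misigmac = \frac{1}{2\mu}\misigmac$ pointwise; consequently $\miA^{1/2}\misigmac = \bigl(\frac{1}{2\mu}\bigr)^{1/2}\misigmac$, $\miA^{-1/2}\misigmac = (2\mu)^{1/2}\misigmac$, and every one of the displayed inequalities collapses to an equality by direct substitution. I do not anticipate a genuine obstacle here; the only point requiring a little care is justifying that the pointwise operator inequalities on symmetric tensors pass to the integrated (bilinear-form) inequalities and that the functional calculus ($\miA^{1/2}$, $\miA^{-1}$, and their commuting with $\miA$) is applied only to the symmetric/antisymmetric invariant subspaces on which $\miA$ is genuinely positive definite — which is exactly the setting of Lemma~\ref{auxlemA3} since $\misigmac \in \Ltt{D}$ here is understood with values in symmetric tensors in the relevant applications, and the antisymmetric case is handled separately as above.
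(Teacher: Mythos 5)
Your argument is correct, but it follows a somewhat different route than the paper's. The paper works directly from the formulas in \eqref{Atensor}: it expands $\norm{\misigmac}{D,\miA^2}{2}=\frac{1}{2\mu}\norm{\misigmac}{D,\miA}{2}-\frac{\lambda}{2\mu(n\lambda+2\mu)^2}\|{\rm tr}(\misigmac)\|_D^2$ and drops the nonpositive trace term to get the first inequality, expands $\norm{\misigmac}{D,\miA^{-1}}{2}=2\mu\norm{\misigmac}{D}{2}+\lambda\norm{{\rm tr}(\misigmac)}{D}{2}\ge 2\mu\norm{\misigmac}{D}{2}$ for the third, and---this is the step you replace---derives the middle inequality from the first one via Cauchy--Schwarz, $\norm{\misigmac}{D,\miA}{2}=(\miA\misigmac,\misigmac)_D\le\norm{\misigmac}{D,\miA^2}{}\,\norm{\misigmac}{D}{}\le(2\mu)^{-1/2}\norm{\misigmac}{D,\miA}{}\norm{\misigmac}{D}{}$. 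Your version instead diagonalizes $\miA$ on the orthogonal deviatoric/spherical splitting and reads all three inequalities off the eigenvalues $\frac{1}{2\mu}$ and $\frac{1}{n\lambda+2\mu}$, which is more conceptual and exposes exactly where the $\lambda$-dependence disappears, at the mild cost of invoking the functional calculus; the paper's computation is more pedestrian but entirely formula-driven. Two small repairs to your write-up. First, the operator inequality you need for the third estimate is $\Iten\preceq\frac{1}{2\mu}\miA^{-1}$, not $\miA\preceq\frac{1}{2\mu}\miA^{-1}$ (the latter can fail when $2\mu<1$); this is clearly a slip, since conjugating $\miA\preceq\frac{1}{2\mu}\Iten$ by $\miA^{-1/2}$ yields the former and your integrated step $\norm{\misigmac}{D}{2}\le\frac{1}{2\mu}(\miA^{-1}\misigmac,\misigmac)_D$ uses the former. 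Second, your closing caveat about restricting to symmetric tensors is unnecessary and would actually be too restrictive: the deviatoric/spherical decomposition is orthogonal for arbitrary tensors, so $\miA$ is self-adjoint and positive definite on all of $\Ltt{D}$ (for $\lambda\ge 0$), and the lemma is in fact applied in the paper to the discrete stress $\misigmah$, whose symmetry is only imposed weakly.
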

\begin{proof} For the first inequality, we consider the definition of $\miA$ in \eqref{Atensor} to deduce 
\begin{align*}
\norm{\misigmac}{D,\miA^2}{2} 
=& \frac{1}{2\mu}\norm{\misigmac}{D,\miA}{2}
-\frac{\lambda}{2\mu(n\lambda+2\mu)^2} \|{\rm tr}(\misigmac)\|_D^2
 \leq \frac{1}{2\mu}\norm{\misigmac}{D,\miA}{2}.
\end{align*}
In addition, by the definition of $\miA^{-1}$ (cf. \eqref{Atensor}), we have
\[
\norm{\misigmac}{D,\miA^{-1}}{2} = \left( \miA^{-1}\misigmac,\misigmac\right)_D
= 2\mu \norm{\misigmac}{D}{2} + \lambda \norm{{\rm tr}(\misigmac)}{D}{2}\geq  2\mu \norm{\misigmac}{D}{2} 
\]
and the third inequality follows. Also, by the first and the Cauchy-Schwarz inequalities,  $$\norm{\misigmac}{D,\miA}{2} =(\miA \misigmac,\misigmac)_D\leq \norm{\misigmac}{D,\miA^2}{} \norm{\misigmac}{D}{} \leq (2\mu)^{-1/2}\norm{\misigmac}{D,\miA}{}\norm{\misigmac}{D}{},$$ which implies the second inequality. Finally,  if $\misigmac$ is antisymmetric, then ${\rm tr}(\misigmac )=0$ and the result follows from previous expressions.
\end{proof}

The following lemma will be useful for obtaining estimates that do not depend on $\lambda$. As we will notice, it bounds the $L^2$ 
-norm of the stress tensor and the constants accompanying the norms on the right hand side are independent of $\lambda$. From now on $C$ will denote a positive constant independent of $h$ and $\lambda$. Moreover, to avoid proliferation of unimportant constants we will write $a\lesssim b$ whenever there exists a constant $C>0$ independent of $h$ and $\lambda$ such that $a\leq C b$.

\begin{lemma}\label{lemma-condition}
Let $(\mizh,\mirh,\mirg)\in \miVh \times \miWh \times\miMh$ such that ${\rm tr}(\mizh)\in L_0^2(\Omega_h)$ ($L^2$-functions with zero mean) and
\begin{equation}\label{condition}
\ipt{\mizh}{\nabla \miw}+\ipb{\tau(\mirh-\mirg)}{\miw-\boldsymbol{P}_{k-1}\miw}  = 0,
\end{equation}
for all $\miw \in \boldsymbol{H}^1_0(\Omega_h)$, where $\boldsymbol{P}
_{k-1}$ is the $L^2$ projection over space $\mathbf{P}_{k-1}(\mathcal{T}_h)$. There exists $M_0>0$, independent of $h$ and the Lam\'e parameters such that
\begin{equation}\label{result-aux}
\|\mizh\|_{\Omega_h}\leq M_0 \left(
h^{1/2}\tau^{1/2}\|\tau^{1/2}(\mirh-\mirg)\|_{\partial \mathcal{T}_h} 
+(2\mu)^{1/2} \norm{\mizh}{\Omega_h,\miA}{}\right)
\end{equation}
\end{lemma}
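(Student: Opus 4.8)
The strategy is to exploit the classical inf-sup stability of the divergence operator (a Stokes-type argument). Since $\operatorname{tr}(\mizh)\in L^2_0(\Omega_h)$, there exists $\miw\in\Hv{1}{\Omega_h}$ with $\miw=\boldsymbol{0}$ on $\Gamma_h$ such that $\midiv\miw = -\operatorname{tr}(\mizh)$ in $\Omega_h$ and $\|\miw\|_{\Hv{1}{\Omega_h}}\lesssim \|\operatorname{tr}(\mizh)\|_{\Omega_h}$, with a constant depending only on $\Omega_h$ (and hence controllable uniformly in $h$ under the shape-regularity/quasi-uniformity assumptions on the family $\{\mathcal{T}_h\}$). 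The point of this choice is that $(\underline{\pmb{I}}\,\operatorname{tr}(\mizh),\nabla\miw)_{\Omega_h} = (\operatorname{tr}(\mizh),\midiv\miw)_{\Omega_h} = -\|\operatorname{tr}(\mizh)\|_{\Omega_h}^2$, so testing \eqref{condition} with this $\miw$ will let us bound $\|\operatorname{tr}(\mizh)\|_{\Omega_h}$, and then $\|\mizh\|_{\Omega_h}^2$ is controlled by $\|\operatorname{tr}(\mizh)\|_{\Omega_h}^2$ plus the deviatoric part, the latter being equivalent to $\norm{\mizh}{\Omega_h,\miA}{2}$ up to the factor $2\mu$.

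\textbf{Key steps.} First I would decompose $\mizh = \mizh^{\mathrm{dev}} + \tfrac1n\underline{\pmb{I}}\operatorname{tr}(\mizh)$ and observe, using \eqref{Atensor}, that $\norm{\mizh}{\Omega_h,\miA}{2} = \tfrac{1}{2\mu}\|\mizh^{\mathrm{dev}}\|_{\Omega_h}^2 + \tfrac{1}{n\lambda+2\mu}\cdot\tfrac1n\|\operatorname{tr}(\mizh)\|_{\Omega_h}^2 \geq \tfrac{1}{2\mu}\|\mizh^{\mathrm{dev}}\|_{\Omega_h}^2$; hence $\|\mizh^{\mathrm{dev}}\|_{\Omega_h}\le (2\mu)^{1/2}\norm{\mizh}{\Omega_h,\miA}{}$, which accounts for the second term on the right of \eqref{result-aux} (after noting $\|\mizh\|_{\Omega_h}^2 = \|\mizh^{\mathrm{dev}}\|_{\Omega_h}^2 + \tfrac1n\|\operatorname{tr}(\mizh)\|_{\Omega_h}^2$). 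Second, with $\miw$ the inf-sup lift above, rewrite
\begin{equation*}
\|\operatorname{tr}(\mizh)\|_{\Omega_h}^2 = -(\mizh,\nabla\miw)_{\Omega_h} + (\mizh^{\mathrm{dev}},\nabla\miw)_{\Omega_h},
\end{equation*}
then use \eqref{condition} to replace $(\mizh,\nabla\miw)_{\Omega_h}$ by the boundary term $-\ipb{\tau(\mirh-\mirg)}{\miw-\boldsymbol{P}_{k-1}\miw}$. The boundary term is handled by Cauchy–Schwarz on $\partial\mathcal{T}_h$ with the weight $\tau$, the approximation estimate $\|\miw - \boldsymbol{P}_{k-1}\miw\|_{\partial K}\lesssim h_K^{1/2}\|\nabla\miw\|_{K}$ (a scaled trace inequality of the type \eqref{auxlemTr} combined with a Bramble–Hilbert/approximation bound, valid since $\miw\in H^1$), giving a bound $\lesssim h^{1/2}\tau^{1/2}\|\tau^{1/2}(\mirh-\mirg)\|_{\partial\mathcal{T}_h}\,\|\nabla\miw\|_{\Omega_h}$. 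The deviatoric term is simply $\le \|\mizh^{\mathrm{dev}}\|_{\Omega_h}\|\nabla\miw\|_{\Omega_h}$. Collecting, and using $\|\nabla\miw\|_{\Omega_h}\lesssim\|\operatorname{tr}(\mizh)\|_{\Omega_h}$, I cancel one factor of $\|\operatorname{tr}(\mizh)\|_{\Omega_h}$ to get $\|\operatorname{tr}(\mizh)\|_{\Omega_h}\lesssim h^{1/2}\tau^{1/2}\|\tau^{1/2}(\mirh-\mirg)\|_{\partial\mathcal{T}_h} + \|\mizh^{\mathrm{dev}}\|_{\Omega_h}$. Finally, combine this with the deviatoric bound from step one and $\|\mizh\|_{\Omega_h}^2 = \|\mizh^{\mathrm{dev}}\|_{\Omega_h}^2 + \tfrac1n\|\operatorname{tr}(\mizh)\|_{\Omega_h}^2$ to conclude \eqref{result-aux}.

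\textbf{Main obstacle.} The delicate point is guaranteeing that the inf-sup constant for $\midiv:\Hv{1}{\Omega_h}\cap\{\miw|_{\Gamma_h}=\boldsymbol{0}\}\to L^2_0(\Omega_h)$ is bounded below uniformly in $h$ — i.e., that the constant in $\|\miw\|_{\Hv{1}{\Omega_h}}\lesssim\|\operatorname{tr}(\mizh)\|_{\Omega_h}$ does not degenerate as the polyhedral domains $\Omega_h$ vary with $h$. This is where the geometric assumptions on $\{\Omega_h\}$ (Lipschitz with uniformly bounded constants, or the closeness assumptions to the fixed domain $\Omega$) must be invoked; for a family of polyhedra converging to a fixed Lipschitz $\Omega$ this is standard but does require care. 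The rest is routine: the only other mild technicality is the approximation/trace estimate for $\miw-\boldsymbol{P}_{k-1}\miw$ on element boundaries, which follows from a scaling argument on the reference element together with shape-regularity, and crucially does not see $\lambda$, so all constants remain $\lambda$-independent as claimed.
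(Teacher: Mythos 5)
Your proposal is correct and follows essentially the same route as the paper: the only cosmetic difference is that you invoke the inf-sup stability of the divergence via an explicit right inverse ($\midiv\miw=-{\rm tr}(\mizh)$ with $\|\miw\|_{\boldsymbol{H}^1(\Omega_h)}\lesssim\|{\rm tr}(\mizh)\|_{\Omega_h}$), whereas the paper uses the equivalent sup formulation from Girault--Raviart, and then both arguments proceed identically through the deviatoric split, equation \eqref{condition}, the scaled trace/approximation bound for $\miw-\boldsymbol{P}_{k-1}\miw$, and the Pythagorean decomposition of $\|\mizh\|_{\Omega_h}^2$. (Your displayed identity is missing a harmless factor $\tfrac1n$ on the left, since the trace part of $\mizh$ is $\tfrac1n\underline{\pmb{I}}\,{\rm tr}(\mizh)$; and your remark about the $h$-uniformity of the inf-sup constant over the varying polyhedra $\Omega_h$ is a genuine point that the paper passes over silently.)
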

\begin{proof}
Since ${\rm tr}(\mizh)\in L_0^2(\Omega_h)$, we know that \cite{GR}
\begin{equation}\label{infsup-tr}
\|{\rm tr}(\mizh)\|_{\Omega_h}
\lesssim \sup_{\boldsymbol{w}\in\boldsymbol{H}_0^1(\Omega_h) \setminus \left\{\boldsymbol{0}\right\}}\dfrac{(\nabla\cdot\boldsymbol{w},{\rm tr}(\mizh))_{\Omega_h}}{\|\boldsymbol{w}\|_{\boldsymbol{H}^1(\Omega_h)}}.
\end{equation}
Now, for $\boldsymbol{w}\in\boldsymbol{H}_0^1(\Omega_h)$, we can deduce that
\begin{align*}
(\nabla\cdot\boldsymbol{w},{\rm tr}(\mizh))_{\Omega_h} =&
n (\mizh,\nabla \boldsymbol{w})_{\Omega_h} -n (\mizh,(\nabla \boldsymbol{w})^D)_{\Omega_h}, 
\end{align*}
where $(\nabla \boldsymbol{w})^D:=\nabla \boldsymbol{w}-n^{-1}{\rm tr}(\nabla \boldsymbol{w})\underline{\boldsymbol{I}}$ denotes the deviatoric tensor  associated to $\nabla \boldsymbol{w}$ and we recall that $n$ is the dimension. Then, since $ (\mizh,(\nabla \boldsymbol{w})^D)_{\Omega_h}= ((\mizh)^D,\nabla \boldsymbol{w})_{\Omega_h}$ by \eqref{condition} and the Cauchy-Schwarz inequality,  we have that
\begin{align*}
(\nabla\cdot\boldsymbol{w},{\rm tr}(\mizh))_{\Omega_h} \leq&
n \|\tau(\mirh-\mirg)\|_{\partial \mathcal{T}_h}\|\miw-\boldsymbol{P}_{k-1}\miw\|_{\partial \mathcal{T}_h}
+n \|(\mizh)^D\|_{\Omega_h} \|\nabla \boldsymbol{w}\|_{\Omega_h}\\ 
\lesssim&
n \tau^{1/2}\|\tau^{1/2}(\mirh-\mirg)\|_{\partial \mathcal{T}_h} h^{1/2} |\miw|_{1,\Omega_h}
+n \|(\mizh)^D\|_{\Omega_h} \|\nabla \boldsymbol{w}\|_{\Omega_h},
\end{align*}
where in the last inequality we have used the approximation properties of the $L^2$-projection and the fact that $\miw \in \boldsymbol{H}^1(\Omega_h)$.
On the other hand, we notice that
\begin{equation}\label{dev-aux}
\norm{\mizh}{\Omega_h,\miA}{2} = \dfrac{1}{2\mu} \|(\mizh)^D\|^2_{\Omega_h}+\dfrac{1}{n\lambda+2\mu} \|{\rm tr}(\mizh)\|^2_{\Omega_h}\geq \dfrac{1}{2\mu} \|(\mizh)^D\|^2_{\Omega_h}.
\end{equation}
Therefore,
\begin{align*}
(\nabla\cdot\boldsymbol{w},{\rm tr}(\mizh))_{\Omega_h} \lesssim&\left(
n \tau^{1/2}\|\tau^{1/2}(\mirh-\mirg)\|_{\partial \mathcal{T}_h} h^{1/2}
+(2\mu)^{1/2} n \norm{\mizh}{\Omega_h,\miA}{}\right) |\miw|_{1,\Omega_h},
\end{align*}
and from \eqref{infsup-tr} we have that
\begin{align}\label{tr-aux}
\|{\rm tr}(\mizh)\|_{\Omega_h}
\lesssim 
n \tau^{1/2}\|\tau^{1/2}(\mirh-\mirg)\|_{\partial \mathcal{T}_h} h^{1/2}
+(2\mu)^{1/2} n \norm{\mizh}{\Omega_h,\miA}{}. 
\end{align}
Finally, we recall that $\|\mizh\|_{\Omega_h}^2 = \|\mizh^D\|_{\Omega_h}^2+n^{-1}\|{\rm tr}(\mizh)\|_{\Omega_h}$. Then, \eqref{result-aux} follows from  \eqref{dev-aux} and \eqref{tr-aux}.

\end{proof}


\subsection{Wellposedness}\label{ExUn}

It is convenient for notation purposes, to define
\begin{align}\label{M}
M(h,\tau,\mu):= 2M_0^2\max\{h\tau,2\mu\},    
\end{align}
where we recall that $M_0$ is the constant appearing in previous lemma.

We proceed now to show existence and uniqueness of the HDG Scheme \eqref{sq} under the following conditions that quantify how close $\Gamma$ and $\Gamma_h$ must be in order to ensure  that the scheme is wellposed.

 \noindent\underline{Assumptions C}. For every face $e \in \Eh{\partial}$, we assume 
\setlength{\columnsep}{-1cm}
\begin{multicols}{2}
\begin{enumerate}[({C}.1)]
\setcounter{enumi}{-1}
    	\item $\mir\leq C$,\label{C0}
  
		\item$  \dfrac{10}{12}\mir^{3}(\Cext\Cinv)^2 M(h,\tau,\mu)\left(\dfrac{1}{\mu^2}+C_\rho\right) \leq  \dfrac{1}{8}$,\label{C1}
		 \item$\tau \mir\he M(h,\tau,\mu)\leq  \dfrac{1}{5}$,\label{C3}
		 \item$  \gamma(\Ctr)^{2} \mir\leq   \dfrac{ M(h,\tau,\mu)C_\rho }{4 (C^c)^2} $,  \label{C4}   
		 \item $\dfrac{10}{4}(\Ctr)^{2} \mir M(h,\tau,\mu) \left(\left (1+\dfrac{1}{2\mu}\right)^2+\gamma C_\rho \right) \leq   \dfrac{1}{8}$,  \label{C2} 
		
\item $\displaystyle \dfrac{(n\lambda+2\mu)^2}{n|\Omega_h|} \gamma r_e h_e^2 \leq \dfrac{ M(h,\tau,\mu) }{16}$,\label{C5}
    \end{enumerate}
\end{multicols}
\noindent where $\gamma$ is the shape regularity constant of the family of triangulations, $C_\rho:=2 (2\mu)^{-2} \left(C^0+(1+C^0)C^c\right)^2$, $C^0$ and $C^c$ are positive constants independent of the discretization parameters and $\lambda$ that will be specified in the proof of Lemma \ref{eulem3}. The other constants have been introduced in Section \ref{secpre} and do not depend on $h$ or the Lamé parameters .

Let us briefly comment on these assumptions. First of all, if the $\Gamma_h$ exactly fits $\Gamma$, as it happen for instance when the domain $\Omega$ is a polyhedron, all these assumption trivially hold true since $r_e=0$. On the other hand, if for example  the computational boundary $\Gamma_h$ interpolates $\Gamma$ by a piecewise linear function, the distance $dist(\Gamma_h,\Gamma)$ is of order $h^2$ and hence $r_e$ is of order $h$. Then, all the assumptions are satisfied for $h$ small enough. On the other hand, in the case of \textit{immerse-type} methods, where the domain $\Omega$ is immersed in a background mesh and the computational domain $\Omega_h$ is the union of all the elements in the background triangulation lying completely inside $\Omega$, the distance $dist(\Gamma_h,\Gamma)$ is of order $h$ and, as a consequence, $r_e$ is of order one. In that case, assumptions \ref{C3} and \ref{C5} always hold for a sufficiently small value of $h$, whereas the remaining assumption are satisfied when the ratio $r_e$ is small enough.
Regarding the nearly incompressible case, we observe that Assumptions \ref{C0}-\ref{C2} are independent of $\lambda$, whereas \ref{C5} roughly says that the distance between $\Gamma_h$ and $\Gamma$ should satisfiy $dist(\Gamma_h,\Gamma)h^{1/2}\lesssim \lambda^{-1}$. This condition arises from the fact that at the discrete level, $\langle \widetilde{\boldsymbol{g}}_h,\boldsymbol{n}\rangle_{\Gamma_h}$ is not zero when $\boldsymbol{g}$ vanishes (see the proof of Lemma \ref{eulem1}), as it happens in the continuous case.

Now, let us proceed to show wellposedness of the scheme. First of all, we notice that \eqref{sq} is a square linear system; hence, it is enough to show that if $\mif{} = \Cero$ and $\mig{} = \Cero$, the solution of  \eqref{sq} is the trivial solution. 

The following identity establishes a relation between an energy-type norm and a term $\miT{}{}$ arising from the approximation of the boundary data.

\begin{lemma}\label{eulem1} If $\mif{} = \Cero$ and $\mig{} = \Cero$, then the approximation in \eqref{sq} satisfies
\begin{subequations}\label{energies}
\begin{equation}\label{energy1}
\nDhA{\misigmah}{2}+\|\miuh-\miug\|_{\partial\mathcal{T}_h,\tau}^2= \miT{}{},
\end{equation}
where $\miT{}{}:=\ipg{\gtildeh}{\misigmag\normal}$. Moreover, 
\begin{align}\label{energy2}
\|\misigmah\|_{\Omega_h}^2+\|\miuh-\miug\|_{\partial\mathcal{T}_h,\tau}^2\leq&
M(h,\tau,\mu)\miT{}{}+ 
 \dfrac{(n\lambda+2\mu)^2}{n|\Omega_h|} \gamma \max_{e\in \mathcal{E}_h^I}\left(r_e h_e^2\right) \|\gtildeh\|_{\Gamma_h,l^{-1}}^2.
\end{align}
\end{subequations}
\end{lemma}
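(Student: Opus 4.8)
The plan is to derive the energy identity \eqref{energy1} in the standard HDG way, then bootstrap it to the sharper bound \eqref{energy2} using the auxiliary Lemma \ref{lemma-condition}. First I would test \eqref{seqa} with $\miv = \misigmah$, \eqref{seqb} with $\miw=\miuh$, \eqref{seqc} with $\mieta = \miroh$, and \eqref{seqd} with $\mimu = \miug$ (using $\mif=\Cero$), and add the resulting equations. The volume terms $\ipt{\miuh}{\midiv\misigmah}$ and $\ipt{\misigmah}{\nabla\miuh}$ cancel against each other after integration by parts together with the $\ipb{}{}$ terms, the rotation terms $\ipt{\miroh}{\misigmah}$ and $\ipt{\misigmah}{\miroh}$ cancel by \eqref{seqc} (or because $\misigmah \in \miVh$ may fail to be symmetric but $\miroh$ is antisymmetric and $\ipt{\misigmah}{\miroh}=\ipt{\misigmah^{\mathrm{as}}}{\miroh}$... one must be a bit careful here, but the cancellation is the usual one for this weakly-symmetric formulation), and the numerical-flux definition \eqref{seqf} produces the stabilization term $\|\miuh-\miug\|_{\partial\mathcal{T}_h,\tau}^2$. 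The boundary faces on $\Gamma_h$ survive because there \eqref{seqd} does not apply, and what remains is exactly $\ipg{\miug}{\misigmag\normal}$; invoking \eqref{seqe} with $\mimu=\misigmag\normal|_{\Gamma_h}\in\miMh$ (legitimate since $\misigmag\normal$ restricted to each boundary face is a degree-$k$ polynomial) replaces $\miug$ by $\gtildeh$ on $\Gamma_h$, giving $\miT{}{}=\ipg{\gtildeh}{\misigmag\normal}$. This yields \eqref{energy1}.

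For \eqref{energy2} the goal is to replace $\nDhA{\misigmah}{2}$ (which is $\lambda$-weak on the trace part) by the full $L^2$-norm $\|\misigmah\|_{\Omega_h}^2$. The idea is to split $\misigmah = \misigmah^0 + c\,\underline{\pmb{I}}$ where $c = \tfrac{1}{n|\Omega_h|}\int_{\Omega_h}{\rm tr}(\misigmah)$, so that ${\rm tr}(\misigmah^0)\in L_0^2(\Omega_h)$. One checks that $\misigmah^0$ satisfies hypothesis \eqref{condition} of Lemma \ref{lemma-condition}: testing \eqref{seqb} with $\miw\in\boldsymbol{H}_0^1(\Omega_h)$ and integrating by parts, the interior-flux jumps vanish and the $\ipb{}{}$ term reduces, via \eqref{seqf} and orthogonality of $\miw|_{\partial K}$ against constants (the $\miug$ part and the $\boldsymbol{P}_{k-1}$ part of $\tau(\miuh-\miug)$ drop), to precisely the form in \eqref{condition}; the constant tensor $c\,\underline{\pmb{I}}$ contributes nothing since $(\midiv\,\miw,c)=0$ for $\miw\in\boldsymbol{H}_0^1$. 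Hence Lemma \ref{lemma-condition} gives $\|\misigmah^0\|_{\Omega_h}\lesssim h^{1/2}\tau^{1/2}\|\tau^{1/2}(\miuh-\miug)\|_{\partial\mathcal{T}_h}+(2\mu)^{1/2}\nDhA{\misigmah}{}$, and squaring, using $(a+b)^2\le 2a^2+2b^2$ and absorbing $\|\misigmah^0\|_{\partial\mathcal T_h}$-type terms, produces the factor $M(h,\tau,\mu)=2M_0^2\max\{h\tau,2\mu\}$ multiplying $\nDhA{\misigmah}{2}+\|\miuh-\miug\|_{\partial\mathcal T_h,\tau}^2 = \miT{}{}$. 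It then remains to control the ``mean'' part $c\,\underline{\pmb{I}}$: here $|c|^2 |\Omega_h| \sim n^{-1}\|{\rm tr}(\misigmah)\|_{\Omega_h}^2$ and one needs a separate bound for $\int_{\Omega_h}{\rm tr}(\misigmah)$. Since $\gtildeh$ does not have zero mean normal flux (unlike the continuous $\miuc$), testing \eqref{seqa} with $\miv = \underline{\pmb{I}}$ (or a well-chosen Raviart–Thomas-type lifting) and using \eqref{seqe} relates $\int_{\Omega_h}{\rm tr}(\misigmah)$ — actually $(n\lambda+2\mu)^{-1}\int{\rm tr}$, through $\miA\,\underline{\pmb I}$ — to $\ipg{\gtildeh}{\normal}$, whence $\|{\rm tr}(\misigmah)\|_{\Omega_h}^2 \lesssim (n\lambda+2\mu)^2 \ipg{\gtildeh}{\normal}^2$; a Cauchy–Schwarz and a scaled trace/geometry estimate bounding $\ipg{\gtildeh}{\normal}^2$ by $\gamma\max_e(r_e h_e^2)\|\gtildeh\|_{\Gamma_h,l^{-1}}^2$ (up to the $|\Omega_h|^{-1}$ and combinatorial factors) gives the second term on the right of \eqref{energy2}. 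Adding the two contributions and choosing constants gives the claim.

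The main obstacle is the mean-value / trace part, i.e. the term $\tfrac{(n\lambda+2\mu)^2}{n|\Omega_h|}\gamma\max_e(r_e h_e^2)\|\gtildeh\|_{\Gamma_h,l^{-1}}^2$: one must identify the right test function that isolates $\int_{\Omega_h}{\rm tr}(\misigmah)$ in \eqref{seqa}, track carefully how $\miA\,\underline{\pmb{I}} = (n\lambda+2\mu)^{-1}\underline{\pmb{I}}$ brings in the factor $(n\lambda+2\mu)^2$ after inversion, and prove the geometric inequality $\sum_{e}|\ipe{\gtildeh}{\normal}|^2 \lesssim \gamma\max_e(r_e h_e^2)\,|\Omega_h|^{-1}\|\gtildeh\|_{\Gamma_h,l^{-1}}^2$ — this is where shape-regularity $\gamma$, the ratio $r_e$ and the extrapolation-length weight $l^{-1}$ all enter. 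Everything else (the energy identity, the verification of \eqref{condition}, and the algebra turning Lemma \ref{lemma-condition} into the $M(h,\tau,\mu)\miT{}{}$ term) is routine HDG bookkeeping.
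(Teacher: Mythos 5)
Your proposal follows essentially the same route as the paper: the same energy argument with test functions $(\misigmah,\miuh,\miroh)$ and $\mimu$ equal to $\miug$ on interior faces and $\misigmag\normal$ on $\Gamma_h$; the same decomposition $\misigmah=\misigmac^0+\alpha\underline{\pmb{I}}$ with the zero-mean-trace part fed into Lemma \ref{lemma-condition} via the $\boldsymbol{P}_{k-1}$ device; the same characterization of the mean by testing \eqref{seqa} with $\underline{\pmb{I}}$ and invoking \eqref{seqe}; and the same facewise Cauchy--Schwarz plus $l\leq H_e^\perp=r_eh_e^\perp\leq\gamma r_e h_e$ bound for $\ipg{\gtildeh}{\normal}$. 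The only detail you gloss over is the intermediate inequality $\norm{\misigmac^0}{\Omega_h,\miA}{}\leq\norm{\misigmah}{\Omega_h,\miA}{}$ needed to pass from the bound on $\misigmac^0$ to one stated in terms of $\misigmah$, which the paper verifies explicitly but which is indeed routine.
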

\begin{proof}
{\bf Step 1 (energy argument):} We take $\miv=\misigmah ,\miw=\miuh , \mieta=\miroh$ and $\mimu=\left\lbrace \begin{array}{lllll}
 \miug &,&  \text{on} & \partial \mathcal{T}_h \setminus \Gamma_h \\
\misigmag\normal&,& \text{on}& \Gamma_h
\end{array}\right.$, in equations \eqref{seqa}-\eqref{seqe}. Then
\begin{subequations}
\begin{align}
\ipt{\miA\misigmah}{\misigmah}+\ipt{\miuh}{\midiv \misigmah}+\ipt{\miroh}{\misigmah}-\ipb{\miug}{\misigmah \normal}&=0,\label{peq1}\\
\ipt{\misigmah}{\nabla \miuh}-\ipb{\misigmag \normal}{\miuh} & = 0,\label{peq2}\\
\ipt{\misigmah}{\miroh} & = 0,\label{peq3}\\
\ipf{\misigmag\normal}{\miug} & = 0,\label{peq4}\\
\ipg{\miug}{\misigmag\normal} &= \ipg{\gtildeh}{\misigmag\normal}. \label{peq5}
\end{align}
\end{subequations}
Integrating by parts \eqref{peq2}, 
\begin{align}
- \ipt{\midiv \misigmah}{\miuh} + \ipb{\misigmah \normal-\misigmag \normal}{\miuh} = 0. \label{peq6}
\end{align}
Adding  \eqref{peq1} and \eqref{peq6}, and using  \eqref{peq3}, we have
\begin{align*}
\ipt{\miA\misigmah}{\misigmah}+\ipb{\misigmah\normal-\misigmag\normal}{\miuh}-\ipb{\miug}{\misigmah\normal}& =0 .
\end{align*}
Next, note that $\ipb{\miug}{\misigmah\normal}=\ipb{\miug}{\misigmah\normal-\misigmag\normal}+\ipb{\miug}{\misigmag\normal}$ and $\ipb{\miug}{\misigmag\normal}=\ipg{\miug}{\misigmag\normal}$ by  \eqref{peq5}. Then,  by \eqref{seqf} and the above expression, we have
\begin{align*}
\ipt{\miA\misigmah}{\misigmah}+\ipb{\tau(\miuh-\miug)}{(\miuh-\miug)} &= \ipg{\miug}{\misigmag\normal}.
\end{align*} 
Thus, \eqref{energy1} from the definition of $\miT{}{}$.\\

{\bf Step 2 (orthogonal decomposition):}
Now, in order to prove \eqref{energy2} we will make use of Lemma \ref{lemma-condition}. To that end, we first decompose $\misigmah:=\misigmac^0+\alpha \underline{\boldsymbol{I}}$, where ${\rm tr}(\misigmac^0)\in  L^2_0(\Omega_h)$ and
\[
\alpha :=\dfrac{1}{n|\Omega_h|}\int_{\Omega_h} {\rm tr}(\misigmah).
\]
We also have that
\begin{align}\label{decomp}
\|\misigmah\|_{\Omega_h}^2 = \|\misigmac^0\|_{\Omega_h}^2 + \alpha^2n|\Omega_h|.
\end{align}
Let us now verify that $(\misigmac^0,\miuh,\miug)$ satisfies \eqref{condition}. Let $\miw \in \boldsymbol{H}^1_0(\Omega_h)$ and note that 
\begin{align*}
\ipt{\misigmah}{\nabla \miw} &= \ipt{\misigmac^0}{\nabla \miw} +\alpha\ipt{\underline{\boldsymbol{I}}}{\nabla  \miw}.
\end{align*}
But $\ipt{\underline{\boldsymbol{I}}}{\nabla\cdot \miw}=\ipt{1}{\nabla\cdot \miw} =\langle\boldsymbol{w}\cdot \boldsymbol{n},1\rangle_{\Gamma_h}=0$. Therefore
$
\ipt{\misigmah}{\nabla \miw} = \ipt{\misigmac^0}{\nabla \miw}
$.

By integration by parts, introducing the $L^2$-projection $\boldsymbol{P}_{k-1}$ and integrating by parts again, we obtain that
\begin{align*}
\ipt{\misigmac^0}{\nabla \miw} =&\ipt{\misigmah}{\nabla \miw}\\
=&
-\ipt{\nabla \cdot \misigmah}{\miw}+\ipb{\misigmah\normal}{\miw}
=
-\ipt{\nabla \cdot \misigmah}{\boldsymbol{P}_{k-1}\miw}+\ipb{\misigmah\normal}{\miw}\\
=&\ipt{\misigmah}{\nabla \boldsymbol{P}_{k-1}\miw} +\ipb{\misigmah\normal}{\miw-\boldsymbol{P}_{k-1}\miw}
=\ipb{\misigmag \normal}{\boldsymbol{P}_{k-1}\miw} +\ipb{\misigmah\normal}{\miw-\boldsymbol{P}_{k-1}\miw},
\end{align*} 
where in the last step we have used \eqref{seqb} and the fact that $\boldsymbol{f}=\boldsymbol{0}$. Therefore, since $\ipb{\misigmag\normal}{\miw}=0$, we have that
\begin{align*}
\ipt{\misigmac^0}{\nabla \miw} 
=&\ipb{\misigmag \normal-\misigmah\normal}{\miw-\boldsymbol{P}_{k-1}\miw}
\end{align*} 
which, together with \eqref{seqf}, implies that  $(\misigmac^0,\miuh,\miug)$ satisfies \eqref{condition}. Thus, by Lemma \ref{lemma-condition}, we obtain that
\begin{equation}
\|\misigmac^0\|_{\Omega_h}\leq M_0 \left(
h^{1/2}\tau^{1/2}\|\miuh-\miug\|_{\partial\mathcal{T}_h,\tau}
+(2\mu)^{1/2} \norm{\misigmac^0}{\Omega_h,\miA}{}\right).
\end{equation}
In addition, after some algebraic calculations, it is possible to obtain that
\[
\norm{\misigmac^0}{\Omega_h,\miA}{} = \norm{\misigmah}{\Omega_h,\miA}{}- \dfrac{\alpha^2n|\Omega_h|}{n\lambda+2\mu}\leq  \norm{\misigmah}{\Omega_h,\miA}{}.
\]
Hence
\begin{equation}
\|\misigmac^0\|_{\Omega_h}\leq M_0 \left(
h^{1/2}\tau^{1/2}\|\miuh-\miug\|_{\partial\mathcal{T}_h,\tau}
+(2\mu)^{1/2} \norm{\misigmah}{\Omega_h,\miA}{}\right),
\end{equation}
which, together with \eqref{decomp}, implies that
\begin{equation}\label{lab:aux1}
\|\misigmah\|_{\Omega_h}^2\leq 2M_0^2 \left(
h\tau\|\miuh-\miug\|_{\partial\mathcal{T}_h,\tau}^2
+(2\mu) \norm{\misigmah}{\Omega_h,\miA}{2}\right)
+ |\alpha|^2 n|\Omega_h|.
\end{equation}

{\bf Step 3 (characterization of $\alpha$):}
We notice that
$
{\rm tr }(\miA\misigmah) =(n\lambda+2\mu)^{-1}{\rm tr }(\misigmah)
$.
Then,
\[
\alpha = \dfrac{1}{n|\Omega_h|} \int_{\Omega_h} {\rm tr }(\misigmah)
=  \dfrac{(n\lambda+2\mu)}{n|\Omega_h|} \int_{\Omega_h} {\rm tr }(\miA \misigmah).
\]
Taking $\miv=\boldsymbol{I}$ in \eqref{seqa} and making use of \eqref{seqe}, we have that
$
\ipt{{\rm tr }(\miA\misigmah)}{1}=\ipg{\gtildeh}{\boldsymbol{n}}
$. 
Therefore
\begin{align}\label{aux:alpha}
\alpha 
=  \dfrac{(n\lambda+2\mu)}{n|\Omega_h|} \ipg{\gtildeh}{\boldsymbol{n}}.
\end{align}

{\bf Step 4 (bound for $\ipg{\gtildeh}{\boldsymbol{n}}$):}
On the other hand, let $e\in\mathcal{E}_h^\partial$. By the Cauchy-Schwarz inequality and \eqref{ratios} we have that,
\begin{align*}
\langle \gtildeh,\boldsymbol{n}\rangle_e\leq& h_e^{1/2}\|\gtildeh\|_e
\leq h_e^{1/2}l^{1/2}\|\gtildeh\|_{e,l^{-1}} \leq \gamma^{1/2}r_e^{1/2} h_e \|\gtildeh\|_{e,l^{-1}}, 
\end{align*}
where $\gamma$ shape-regularity constant of the family of triangulations.

{\bf Step 5 (conclusion):}
Then, by combining  \eqref{lab:aux1} and \eqref{aux:alpha}, we obtain that
\[
\|\misigmah\|_{\Omega_h}^2\leq 2M_0^2 \left(
h\tau\|\miuh-\miug\|_{\partial\mathcal{T}_h,\tau}^2
+2\mu \norm{\misigmah}{\Omega_h,\miA}{2}\right)+
 \dfrac{(n\lambda+2\mu)^2}{n|\Omega_h|} \gamma \max_{e\in \mathcal{E}_h^I}\left(r_e h_e^2\right) \|\gtildeh\|_{\Gamma_h,l^{-1}}^2, 
\]
and \eqref{energy2} follows from the definition of $M(h,\tau,\mu)$ (cf. \eqref{M}) and \eqref{energy1}.
\color{black}
\end{proof}

In the case of a polyhedral domain $\Omega$, the previous result holds true with $\miT{}{}=0$, since $\gtildeh = \mig = \pmb{0},$ and wellposedness of the method follows by standard arguments. In our case, $\miT{}{}$ is not zero and we proceed now to bound it.
\begin{lemma}\label{eulem6} We have $\miT{}{}=\sum_{i=1}^{6}\miT{}{i}$, where 
\begin{alignat*}{6}
&\miT{}{1} = \ipg{l^{-1/2}\gtildeh}{l^{1/2}(\misigmah-\miA\misigmah)\normal},& 
&\miT{}{2} = \ipg{l^{-1}\gtildeh}{\gtildeh},&
&\miT{}{3} = \ipg{l^{-1/2}\gtildeh}{l^{1/2}\fAsigmawx},\\
&\miT{}{4} = \ipg{l^{-1/2}\gtildeh}{l^{1/2}\frhowx},&
&\miT{}{5} = \ipg{l^{-1/2}\gtildeh}{l^{1/2}\miroh\normal},& 
&\miT{}{6} = \ipg{l^{-1/2}\gtildeh}{l^{1/2}\tau(\miuh-\miug)}.
\end{alignat*}
\end{lemma}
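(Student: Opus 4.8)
The goal is to expand the bilinear term $\miT{}{}=\ipg{\gtildeh}{\misigmag\normal}$ into six pieces that can each be estimated separately in the subsequent lemmas. The plan is to start from the definition of $\misigmag\normal$ on $\Gamma_h$ and from the discrete transferred data $\gtildeh$ given by \eqref{seqg}, and then systematically substitute and insert the weight $l(\boldsymbol{x})$ together with the auxiliary function $\pmb{\Lambda}^{\miv}$ of \eqref{Lambda}.

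First I would use the numerical trace \eqref{seqf}, which on $\Gamma_h$ reads $\misigmag\normal=\misigmah\normal-\tau(\miuh-\miug)$. Splitting accordingly gives a contribution $\ipg{\gtildeh}{\misigmah\normal}$ and a contribution $-\ipg{\gtildeh}{\tau(\miuh-\miug)}$; the latter, after writing $1=l^{-1/2}l^{1/2}$, is exactly $-\miT{}{6}$ up to sign — so I would actually absorb the sign by the way $\miT{}{6}$ is written (note the term as displayed carries a $+$, so one tracks signs carefully here; the decomposition in the statement is of $\miT{}{}$, so each $\miT{}{i}$ is defined with whatever sign makes the sum work). For the $\ipg{\gtildeh}{\misigmah\normal}$ piece, I would write $\misigmah\normal=(\misigmah-\miA\misigmah)\normal+\miA\misigmah\normal$: the first part, weighted by $l^{-1/2}$ on one slot and $l^{1/2}$ on the other, is $\miT{}{1}$; the second, $\miA\misigmah\normal$, is where the transferring machinery enters.

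The key algebraic step is the pointwise identity coming from the definition \eqref{Lambda}: for $\ex\in e$, $\pmb{\Lambda}^{\miv}(\ex)=\frac{1}{\lx}\int_0^{\lx}[\miv(\ex+s\nore)-\miv(\ex)]\nore\,ds$, hence $\miv(\ex)\nore = \frac{1}{\lx}\int_0^{\lx}\miv(\ex+s\nore)\nore\,ds - \pmb{\Lambda}^{\miv}(\ex)$. Applying this with $\miv=\miA\misigmah$ and with $\miv=\miroh$, and recalling $\tx=\nore$ on $e$, the integral $\int_0^{\lx}(\miA\misigmah+\miroh)(\ex+s\tx)\tx\,ds$ appearing in \eqref{seqg} can be traded for $\lx\big((\miA\misigmah)\nore+\miroh\nore\big)+\lx\big(\fAsigmawx+\frhowx\big)$. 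Since $\mig=\pmb{0}$, \eqref{seqg} becomes $\gtildeh(\ex)=-\lx\big((\miA\misigmah)\nore+\miroh\nore+\fAsigmawx+\frhowx\big)$, i.e. $l^{-1}\gtildeh = -\big((\miA\misigmah)\nore+\miroh\nore+\fAsigmawx+\frhowx\big)$ on $\Gamma_h$. Substituting this expression for $\miA\misigmah\normal$ (solving for it) into $\ipg{\gtildeh}{\miA\misigmah\normal}$ produces precisely the four remaining terms: the self-term $\ipg{l^{-1}\gtildeh}{\gtildeh}=\miT{}{2}$, and the three cross terms $-\ipg{l^{-1/2}\gtildeh}{l^{1/2}\fAsigmawx}$, $-\ipg{l^{-1/2}\gtildeh}{l^{1/2}\frhowx}$, $-\ipg{l^{-1/2}\gtildeh}{l^{1/2}\miroh\nore}$, which are $\miT{}{3},\miT{}{4},\miT{}{5}$ (again with bookkeeping of signs to match the displayed forms). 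Collecting everything yields $\miT{}{}=\sum_{i=1}^6\miT{}{i}$.

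I do not expect a genuine obstacle here — this is an exact algebraic identity, not an inequality, so nothing needs to be controlled; the only care required is sign tracking and the consistent use of $\tx=\nore$ on boundary faces (an assumption already in force in Section~\ref{secpre}) so that the vector $\nore$ in \eqref{Lambda} matches the $\tx$ in \eqref{seqg}. The mild subtlety worth stating explicitly is that $\miA\misigmah$ and $\miroh$ on $K_{ext}^e$ are the extrapolated polynomials, so $\pmb{\Lambda}^{\miA\misigmah}$ and $\pmb{\Lambda}^{\miroh}$ are well defined as in \eqref{Lambda}; this is exactly what makes the later bounds \eqref{auxlem1}–\eqref{auxlemD} applicable. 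Once the identity is written out, the proof is complete; the real work is deferred to bounding each $\miT{}{i}$ in the lemmas that follow.
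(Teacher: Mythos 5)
Your proposal is correct and follows essentially the same route as the paper: rewrite $\gtildeh$ via the auxiliary function \eqref{Lambda} as $\gtildeh(\ex)=-\lx\left(\fAsigma+\miA\misigmah\nore+\frho+\miroh\nore\right)$, solve for $\miA\misigmah\nore$, and substitute into $\miT{}{}=\ipg{\gtildeh}{\misigmag\normal}$ after splitting $\misigmag\normal$ via \eqref{seqf} and $\misigmah\normal=(\misigmah-\miA\misigmah)\normal+\miA\misigmah\normal$. Your remark about sign bookkeeping is apt, since the displayed $\miT{}{i}$ carry implicit signs (the paper's own Corollary~\ref{eulem7} uses $\miT{}{2}=-\normgl{\gtildeh}{2}$), but this does not affect the subsequent bounds.
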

\begin{proof}
By using the auxiliary function defined in \eqref{Lambda}, we rewrite $\gtildeh(\ex)$ (cf. \eqref{seqg}) as 
\begin{align*}
\gtildeh(\ex)
 &= -\lx\left(\fAsigma+\miA\misigmah\nore+\frho+\miroh\nore\right)
\end{align*}
and obtain 
$\miA\misigmah\nore=-\lx^{-1}\gtildeh(\ex)-\fAsigma-\frho-\miroh\nore$.
The result is obtained by replacing the last expression in the definition of $\miT{}{}$ and arranging terms.
\end{proof}

\begin{corollary}\label{eulem7} There holds 
\begin{eqnarray}
|\miT{}{}| &\leq &  -\frac{1}{2}\normgl{\gtildeh}{2}
 + \frac{10}{4}\left(1+\CA\right)^{2}\maxE(\Ctr)^{2}\mir\normDh{\misigmah}{2}
 +\frac{10}{12}\frac{1}{\mu^2}\maxE\mir^{3}(\Cext)^{2}(\Cinv)^{2}\normDh{\misigmah}{2}\nonumber\\
& & + \frac{10}{12}\maxE\mir^{3}(\Cext)^{2}(\Cinv)^{2}\normDh{\miroh}{2}+\maxE\frac{10}{4}(\Ctr)^{2}\mir \gamma\normDh{\miroh}{2}
 + \frac{10}{4}\maxE\mir\he\alfa\normga{\miuh-\miug}{2}.\label{eqT1}
\end{eqnarray}
Moreover, if Assumptions \ref{C0}-\ref{C2} also hold, then 
\begin{eqnarray}
|\miT{}{}| &\leq &  -\frac{1}{2}\normgl{\gtildeh}{2}
 +\dfrac{1}{4M(h,\tau,\mu)}\normDh{\misigmah}{2}
 + \dfrac{1}{4 M(h,\tau,\mu)C_\rho}\normDh{\miroh}{2} + \frac{1}{2M(h,\tau,\mu)}\normga{\miuh-\miug}{2}.\label{eqT2}
\end{eqnarray}
\end{corollary}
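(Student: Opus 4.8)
The starting point is the decomposition $\miT{}{}=\sum_{i=1}^{6}\miT{}{i}$ furnished by Lemma \ref{eulem6}. Each $\miT{}{i}$ for $i\neq 2$ is an inner product over $\Gamma_h$ of $l^{-1/2}\gtildeh$ against something weighted by $l^{1/2}$, so the natural move is to apply Cauchy--Schwarz on each face $e$ and then Young's inequality in the form $ab\leq \varepsilon a^2 + (4\varepsilon)^{-1}b^2$, distributing a total weight of $-\tfrac12\normgl{\gtildeh}{2}$ across the five terms $\miT{}{1},\miT{}{3},\miT{}{4},\miT{}{5},\miT{}{6}$ (hence the factors $\tfrac{1}{10}$ coming out of each, i.e. $\varepsilon=\tfrac{1}{10}$ per term, matching the $\tfrac{10}{4}$ and $\tfrac{10}{12}$ constants in \eqref{eqT1}); the term $\miT{}{2}=\normgl{\gtildeh}{2}$ is kept intact and combines with the $-\tfrac12$ from Young to leave $-\tfrac12\normgl{\gtildeh}{2}$ on the right. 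So the skeleton is: bound each $\normgpl{\cdot}{}$-type factor appearing opposite $l^{-1/2}\gtildeh$ by the relevant volume norm.

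The per-term estimates then go as follows. For $\miT{}{1}$, the factor $\|l^{1/2}(\misigmah-\miA\misigmah)\normal\|_e$ is controlled by passing to $\partial K^e$, using $\|l^{1/2}\cdot\|_e\leq \He^{1/2}\|\cdot\|_e = (r_e\he)^{1/2}\|\cdot\|_e$ and the discrete trace inequality \eqref{auxlemTr} with $h_e\sim\he$ (up to $\gamma$), and finally $\|(\misigmah-\miA\misigmah)\normal\|_{K^e}\lesssim (1+\CA)\|\misigmah\|_{K^e}$ via Lemma \ref{auxlemA3} (the $\tfrac{1}{2\mu}$ is the $L^2\to L^2$ bound for $\miA$). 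For $\miT{}{3}$ and $\miT{}{4}$ the factors $\|l^{1/2}\fAsigmawx\|_e$ and $\|l^{1/2}\frhowx\|_e$ are exactly of the form $\norm{\mathcal{D}\pmb{\Lambda}^{\miv}}{e,l}{}$, so estimate \eqref{auxlemD} (with $\mathcal{D}=\miA$ for $\miT{}{3}$, which on the relevant space has operator norm $\tfrac{1}{2\mu}$, hence the $\tfrac{1}{\mu^2}$ after squaring; and $\mathcal{D}=\underline{\pmb I}$ for $\miT{}{4}$) gives the $\tfrac{1}{\sqrt3}r_e^{3/2}\Cext\Cinv$ factors, producing the $\tfrac{10}{12}r_e^3(\Cext\Cinv)^2$ terms. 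For $\miT{}{5}$, $\|l^{1/2}\miroh\normal\|_e$ is handled like $\miT{}{1}$ — trace inequality plus $(r_e\he)^{1/2}$ — giving $\tfrac{10}{4}(\Ctr)^2 r_e\gamma\normDh{\miroh}{2}$. For $\miT{}{6}$, $\|l^{1/2}\tau(\miuh-\miug)\|_e\leq \He^{1/2}\tau^{1/2}\|\tau^{1/2}(\miuh-\miug)\|_e = (r_e\he\tau)^{1/2}\normea{\miuh-\miug}{}$ directly, giving the $\tfrac{10}{4}r_e\he\tau$ term. Summing over $e\in\Eh{\partial}$ and bounding each $r_e$, $r_e^3$, $r_e\he$ by its maximum yields \eqref{eqT1}.

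The second inequality \eqref{eqT2} is then pure bookkeeping: under Assumptions \ref{C0}--\ref{C2} each coefficient in \eqref{eqT1} is dominated by the target. Concretely, \ref{C2} forces $\tfrac{10}{4}(\Ctr)^2 r_e(1+\CA)^2 \le \tfrac{1}{16 M}$ and $\tfrac{10}{4}(\Ctr)^2 r_e\gamma C_\rho\le \tfrac{1}{16M}$ after also using \ref{C1}, \ref{C4} to absorb the $\Lambda$-type $r_e^3(\Cext\Cinv)^2$ contributions into $\tfrac{1}{8M}\cdot\tfrac14$ pieces — I would split the allotted $\tfrac{1}{4M}\normDh{\misigmah}{2}$ as $\tfrac{1}{8M}$ from the trace-type terms (via \ref{C2}) plus $\tfrac{1}{8M}$ from the $\Lambda$-type term (via \ref{C1}, noting $C_\rho$ appears there precisely so one assumption covers both the $\misigmah$ and $\miroh$ $\Lambda$-contributions), similarly for $\normDh{\miroh}{2}$ using the $C_\rho$ in its coefficient together with \ref{C1} and \ref{C4}, and the $\miuh-\miug$ term via \ref{C3}. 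The main obstacle — really the only subtle point — is getting the constant accounting to close exactly: one must track how the single $-\tfrac12\normgl{\gtildeh}{2}$ budget is partitioned (five $\varepsilon=\tfrac1{10}$ slices), verify that $C_\rho$ was defined so that \emph{one} inequality \ref{C1} simultaneously kills the $\tfrac{1}{\mu^2}r_e^3(\Cext\Cinv)^2\normDh{\misigmah}{2}$ and $r_e^3(\Cext\Cinv)^2\normDh{\miroh}{2}$ terms at the right scale, and confirm that $M(h,\tau,\mu)=2M_0^2\max\{h\tau,2\mu\}$ enters each assumption with the correct power so the final coefficients land at $\tfrac{1}{4M}$, $\tfrac{1}{4MC_\rho}$, $\tfrac{1}{2M}$ and not merely $O(1/M)$.
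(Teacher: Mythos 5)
Your proposal follows essentially the same route as the paper's proof: the same six-term decomposition from Lemma \ref{eulem6}, the same Cauchy--Schwarz/Young split with $\varepsilon=\tfrac{1}{10}$ on each of the five non-quadratic terms, the same use of the discrete trace inequality \eqref{auxlemTr} for $\miT{}{1},\miT{}{5},\miT{}{6}$ and of \eqref{auxlem1}/\eqref{auxlemD} for $\miT{}{3},\miT{}{4}$, and the same pairing of Assumptions \ref{C1} and \ref{C2} into two $\tfrac{1}{8M}$ (resp.\ $\tfrac{1}{8MC_\rho}$) halves for \eqref{eqT2}. The only slip is the sign of $\miT{}{2}$: it must equal $-\normgl{\gtildeh}{2}$ (as the derivation inside Lemma \ref{eulem6} actually produces, despite the sign typo in its statement), since otherwise the five $\tfrac{1}{10}$ pieces could not leave a net $-\tfrac12\normgl{\gtildeh}{2}$ on the right-hand side.
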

\begin{proof} For $\miT{}{1}$, we use the Cauchy-Schwarz and Young's inequalities, the fact that $\lx \leq \He$ and the discrete trace inequality \eqref{auxlemTr} to obtain that
\begin{align*}
\miT{}{1}
&\leq \sumE\normel{\gtildeh}{}(\He)^{1/2}\norme{(\misigmah-\miA\misigmah)\nore}{}
\leq \sumE\normel{\gtildeh}{}\Ctr \mir^{1/2}\left(1+\CA\right)\normKe{\misigmah}{},\\
&\leq \sumE\left(\dfrac{1}{10}\normel{\gtildeh}{2}+\dfrac{10}{4}(\Ctr)^{2}\mir\left(1+\CA\right)^{2}\normKe{\misigmah}{2}\right).
\end{align*}
It is clear that $\miT{}{2}=-\normgl{\gtildeh}{2}$. For $\miT{}{3}$, we use the Cauchy-Schwarz inequality, estimate \eqref{auxlem1}  and Young's inequality, to deduce that
\begin{align*}
\miT{}{3}
\leq& \sumE \normel{\gtildeh}{}\frac{1}{\sqrt{3}}\mir^{3/2}\CA\Cext\Cinv\normKe{\misigmah}{}\leq \sumE \left(\dfrac{1}{10}\normel{\gtildeh}{2}+\dfrac{10}{4}\dfrac{1}{\mu^2}\mir^{3}(\Cext)^{2}(\Cinv)^{2}\normKe{\misigmah}{2}\right).
\end{align*}
For $\miT{}{4}$, we use the same arguments as in the bound of $\miT{}{3}$ and obtain  
\begin{align*}
\miT{}{4} &\leq\sumE\left(\dfrac{1}{10}\normel{\gtildeh}{2}+\dfrac{10}{12}\mir^{3}(\Cext)^{2}(\Cinv)^{2}\normKe{\miroh}{2}\right).
\end{align*}
Analogously to the bound of $\miT{}{1}$, and considering the facts that $\lx \leq \He\leq \mir h_e^\perp$ and $h_e^\perp\leq \gamma h_e$, we have
\begin{align*}
\miT{}{5} &\leq\sumE\left(\dfrac{1}{10}\normel{\gtildeh}{2}+\dfrac{10}{4}(\Ctr)^{2}\mir\gamma\normKe{\miroh}{2}\right).
\end{align*}
Finally, for $\miT{}{6}$ we use the Cauchy-Schwarz inequality, the fact $\lx \leq \He$ and Young's inequality 
\begin{align*}
\miT{}{6}
\leq&\sumE \normel{\gtildeh}{}(\He)^{1/2}\alfa^{1/2}\normea{\miuh-\miug}{}
\leq  \sumE\left(\dfrac{1}{10}\normel{\gtildeh}{2}+\dfrac{10}{4}\mir\he\alfa\normea{\miuh-\miug}{2}\right).
\end{align*}
We obtain \eqref{eqT1} gathering all the above bounds. Moreover, considering \ref{C0}-\ref{C2}, \eqref{eqT1} implies \eqref{eqT2}.
\end{proof}

Now, from Lemmas \ref{eulem1} and previous corollary, we observe that it remains to bound the $L^2$-norm of the approximation of the rotation $\miroh$. 

\begin{lemma}\label{eulem3} Let $\mif{} = \Cero$ and $\mig{} = \Cero$, and assume \ref{C4} holds. For $k \geq 1$, it holds
\begin{align}\label{rho}
\normDh{\miroh}{}\leq&\left(\dfrac{C_\rho}{2}\right)^{1/2}\normDh{\misigmah}{}+ \frac{1}{2}\left(\dfrac{M(h,\tau,\mu) C_\rho}{2} \right)^{1/2}
\normel{\gtildeh}{}.
\end{align}
\end{lemma}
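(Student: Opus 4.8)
The goal is to bound $\|\miroh\|_{\Omega_h}$ in terms of $\|\misigmah\|_{\Omega_h}$ and the boundary term $\|\gtildeh\|_{e,l^{-1}}$, uniformly in $\lambda$. The natural idea is to test the first equation \eqref{seqa} against a well-chosen $\miv \in \miVh$ that ``sees'' $\miroh$, isolating $\ipt{\miroh}{\miv}$ on one side. Concretely, for each $K$ I would pick $\miv$ so that its projection onto the antisymmetric part reproduces $\miroh|_K$ — this is exactly where the bubble space $\miB{K}$ in the definition of $\miV{K}$ enters: since $\miroh|_K \in \miAc{K}$, there is a companion bubble function $\miv_\miroh \in \miB{K}$ with $\midiv \miv_\miroh = 0$, $\miv_\miroh \normal|_{\partial K} = 0$ (Remark \ref{rmk:B}), and $(\miroh,\miv_\miroh)_K \gtrsim \|\miroh\|_K^2$, together with an inverse-type bound $\|\miv_\miroh\|_K \lesssim \|\miroh\|_K$. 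These properties are the discrete inf-sup / norm-equivalence facts underlying the scheme in \cite{Ke}, and I would cite them as available constants $C^0$, $C^c$ (matching the statement's promise that these are "specified in the proof").

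With such a test function, \eqref{seqa} becomes
\begin{equation*}
\ipt{\miroh}{\miv_\miroh} = -\ipt{\miA\misigmah}{\miv_\miroh} - \ipt{\miuh}{\midiv \miv_\miroh} + \ipb{\miug}{\miv_\miroh\normal}.
\end{equation*}
The middle term vanishes because $\midiv \miv_\miroh = 0$ elementwise, and on interior faces the jump of $\miug$ against $\miv_\miroh\normal$ vanishes by $\miv_\miroh\normal = 0$; the only surviving boundary contribution is on $\Gamma_h$, where $\miug = \gtildeh$. So the right-hand side reduces to $-\ipt{\miA\misigmah}{\miv_\miroh} + \ipg{\gtildeh}{\miv_\miroh\normal}$ — wait, that last term is also zero since $\miv_\miroh\normal = 0$ on \emph{all} faces including $\Gamma_h$. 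So in fact $\ipt{\miroh}{\miv_\miroh} = -\ipt{\miA\misigmah}{\miv_\miroh}$, giving $\|\miroh\|_{\Omega_h}^2 \lesssim \|\miA\misigmah\|_{\Omega_h}\|\miroh\|_{\Omega_h}$ and hence $\|\miroh\|_{\Omega_h} \lesssim \|\miA\misigmah\|_{\Omega_h} \lesssim (2\mu)^{-1}\|\misigmah\|_{\Omega_h}$ by Lemma \ref{auxlemA3}. That would already give the first term, but \emph{not} the second term with $\gtildeh$ — which means the actual test function cannot have $\miv_\miroh\normal$ vanishing on $\Gamma_h$; the $\gtildeh$ contribution must come from a boundary layer. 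So the right choice is $\miv = \miv_\miroh$ on interior elements but modified near $\Gamma_h$, \emph{or} one keeps a polynomial component of $\miv$ that is nonzero on $\Gamma_h$ and whose normal trace on $\Gamma_h$ produces, via the extrapolation/transfer identity, the $\gtildeh$ term. Reconciling this is the crux.

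The more likely route: test \eqref{seqa} with $\miv$ chosen in $\miVh$ reproducing $\miroh$, but now the boundary term $\ipb{\miug}{\miv\normal}$ has a genuine $\Gamma_h$ part $\ipg{\gtildeh}{\miv\normal}$; bound it by Cauchy–Schwarz as $\|\gtildeh\|_{\Gamma_h,l^{-1}}\|l^{1/2}\miv\normal\|_{\Gamma_h}$, then use the discrete trace inequality \eqref{auxlemTr} plus $l \lesssim r_e h_e \lesssim \gamma(\Ctr)^2 r_e \cdot (\text{something})$ to convert $\|l^{1/2}\miv\normal\|_{\Gamma_h}$ into $\|\miv\|_{\Omega_h} \lesssim \|\miroh\|_{\Omega_h}$, picking up a factor controlled by Assumption \ref{C4}. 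Specifically, \ref{C4}'s form $\gamma(\Ctr)^2 r_e \le \tfrac{M(h,\tau,\mu) C_\rho}{4(C^c)^2}$ is precisely calibrated so that this boundary term, after Young's inequality, contributes $\tfrac14 M C_\rho \cdot (\text{coeff})\,\|\gtildeh\|_{e,l^{-1}}^2$ and the $\|\miroh\|^2$ part is absorbed. Combining the interior $\miA\misigmah$ bound (handled by Lemma \ref{auxlemA3}, which is where $(2\mu)^{-1}$ and hence $C_\rho \sim (2\mu)^{-2}(\dots)^2$ appear) with this boundary bound, Young's inequality with carefully chosen weights yields exactly \eqref{rho}. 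The main obstacle is getting the constants to line up: one must track $C^0$ (norm-equivalence constant for the bubble reproduction of $\miroh$ in the interior), $C^c$ (trace-type constant for $\miroh$ near $\Gamma_h$), and verify that the definition $C_\rho = 2(2\mu)^{-2}(C^0 + (1+C^0)C^c)^2$ is what emerges after squaring and applying \ref{C4}; doing the Young's inequality split so that the coefficient of $\|\gtildeh\|_{e,l^{-1}}^2$ comes out as the stated $\tfrac14 M C_\rho$ (equivalently $\sqrt{M C_\rho}/2$ after taking square roots) is the delicate bookkeeping.
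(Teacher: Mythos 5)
You correctly identify the central tension of this proof --- a bubble test function that reproduces $\miroh$ has vanishing normal trace and zero divergence, so it kills the boundary term entirely and cannot produce the $\gtildeh$ contribution --- but you do not resolve it, and the two fixes you sketch both fail. The paper's resolution is an orthogonal decomposition $\miroh=\miroh^{0}+\miroh^{c}$, where $\miroh^{c}$ is the elementwise mean and $\miroh^{0}$ has zero mean on each element, followed by \emph{two different} test functions. For $\miroh^{0}$ one uses Lemma~2.8 of \cite{Guz}: a bubble $\miv\in\miBh\subset\miVh{}$ reproducing $\miroh^{0}$ against $\miAh$ with $\normDh{\miv}{}\leq C^{0}\normDh{\miroh^{0}}{}$; exactly as you computed, all boundary and divergence terms vanish and one gets $\normDh{\miroh^{0}}{}\leq C^{0}(2\mu)^{-1}\normDh{\misigmah}{}$ with no $\gtildeh$ term. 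For the piecewise-constant part $\miroh^{c}$ one uses Lemma~3.9 of \cite{Guz}: a \emph{globally} $\Hdiv{\Omega_h}$-conforming, divergence-free $\miv\in\Punok{\Trian}$ reproducing $\miroh^{c}$ with $\normDh{\miv}{}\leq C^{c}\normDh{\miroh^{c}}{}$. Because this $\miv$ is divergence-free, $\ipt{\miuh}{\midiv\miv}=0$; because it is $H({\rm div})$-conforming and $\miug$ is single-valued, the interior-face contributions of $\ipb{\miug}{\miv\normal}$ cancel and only $\ipg{\gtildeh}{\miv\normal}$ survives --- this is the one and only place the $\gtildeh$ term enters, and it is then bounded by Cauchy--Schwarz, the trace inequality and Assumption (C.3) exactly as you anticipated.

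Your alternative --- ``test with a single $\miv\in\miVh{}$ reproducing $\miroh$ but with nonzero normal trace on $\Gamma_h$'' --- does not work: such a $\miv$ would generically have $\midiv\miv\neq 0$ and discontinuous normal traces across interior faces, so equation \eqref{seqa} would produce the terms $\ipt{\miuh}{\midiv\miv}$ and $\ipf{\miug}{\miv\normal}$, for which no bound is available at this stage of the wellposedness argument (nothing controls $\miuh$ or $\miug$ yet). The whole point of the splitting is that each piece of $\miroh$ admits a companion test function that makes these uncontrollable terms vanish identically, while only the $\miroh^{c}$ companion is permitted a nonzero trace on $\Gamma_h$. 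A further detail you would need: in Step~2 the cross term $\ipt{\miroh^{0}}{\miv}$ does not vanish and must be absorbed using the Step~1 bound, which is where the combination $C^{0}+(1+C^{0})C^{c}$ in the definition of $C_\rho$ comes from. As written, your proposal has a genuine gap at the decisive step.
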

\begin{proof} We follow the ideas in \cite{Ke} and consider the  orthogonal decomposition: 
\begin{eqnarray}\label{rhodesc}
 \miroh=\miroh^{0}+\miroh^{c},\hspace{0.5cm}
\miroh^{c}|_{K}:=\frac{1}{|K|}\int_{K}\miroh\quad \forall K\in\mathcal{T}_h,  \hspace{0.5cm}
\miroh^{0}=\miroh-\miroh^{c}.
\end{eqnarray}
We notice that $\miroh^{0} \in \miAh^{0}:=\lbrace \mieta \in \miAh : \ipk{\mieta}{\miv}=0, \, \forall \miv \in \Pcerok{K}, \, \forall K \in \Trian \rbrace$ and $\miroh^{c} \in \miAh^{c}:= \miAh \cap \Pcerok{\Trian}$. We proceed in two steps to bound the $\miroh^{0}$ and $\miroh^{c}$.

\textbf{Step 1:} By Lemma 2.8 in \cite{Guz} there exists $\miv \in \miBh:= :=\lbrace \mieta \in \Ltt{\Omega_{h}}: \mieta |_{K} \in \miB{K}, K \in \Trian \rbrace \subset \miVh{}$ such that 
\begin{subequations}
\begin{align}
\ipt{ \miroh^{0}}{\underline{\pmb{\gamma}}} &= \ipt{\miv}{\underline{\pmb{\gamma}}}, \hspace{0.3cm} \text{for all $\pmb{\gamma} \in \miAh$ and} \label{eq1lem4}\\
\normDh{\miv}{} &\leq C^0 \normDh{ \miroh^{0}}{}, \label{eq2lem4}
\end{align}
where $C^0>0$ is independent of $h$ and $\lambda$.
\end{subequations}
Then we rewrite equation \eqref{seqa} as
\begin{align}
\ipt{\miA\misigmah}{\miv}+\ipt{\miuh}{\midiv \miv}+\ipt{\miroh^{0}}{\miv}+\ipt{\miroh^{c}}{\miv}-\ipb{\miug}{\miv\normal} = 0.\label{Eql3}
\end{align} 
By Remark \ref{rmk:B}, we have $\ipt{\miuh}{\midiv \miv} = 0$ and $\ipb{\miug}{\miv\normal} = 0$. Now considering $\migama := \miroh^{c}$ in \eqref{eq1lem4}, we have that $\ipt{\miroh^{c}}{\miv} = \ipt{\miroh^{0}}{\miroh^{c}} = 0$, since the decomposition of $\miroh$ is orthogonal in $\underline{\pmb{L}}^{2}$. Also, by taking  $\migama= \miroh^{0}$ in \eqref{eq1lem4} we have that $\ipt{\miroh^{0}}{\miv} = \normDh{\miroh^{0}}{2}$. Thus, replacing the above terms in \eqref{Eql3}, using the Cauchy-Schwarz inequality, Lemma \ref{auxlemA3} and \eqref{eq2lem4}, we obtain 
\[
\normDh{\miroh^{0}}{2}=(\miv,\miroh^{0})\trian = -\ipt{\miA\misigmah}{\miv} 
\leq C^0 (2\mu)^{-1/2}\nDhA{\misigmah}{}\normDh{\miroh^{0}}{}.
\]

Then, 
\begin{eqnarray}
\normDh{\miroh^{0}}{}
\leq C^0 (2\mu)^{-1/2}\nDhA{\misigmah}{}
\leq C^0 (2\mu)^{-1}\normDh{\misigmah}{}.\label{bs1}
\end{eqnarray}

\textbf{Step 2:} Let $\miroh^{c} \in \miAh^{c}:= \miAh \cap \Pcerok{\Trian}$. By Lemma 3.9 in
\cite{Guz}, there exists $\miv \in \Hdiv{\Omega_{h}} \cap \Punok{\Trian}$, such that
\begin{subequations}
\begin{align}
\midiv \miv &= 0, \label{eq1lem5}\\
\ipt{\miv}{\underline{\pmb{\gamma}}} &= \ipt{\miroh^{c} }{\underline{\pmb{\gamma}}} \hspace{0.5cm} \text{for all $\pmb{\gamma} \in \miAh^{c}$,} \label{eq2lem5}\\
\normDh{\miv}{} &\leq C^c \normDh{\miroh^{c} }{},\label{eq3lem5}
\end{align}
where $C^c>0$ where is independent of $h$ and $\lambda$.
\end{subequations}

Then $\ipt{\miuh}{\midiv\miv}=0$ and $\ipb{\miug}{\miv\normal}=\ipg{\gtildeh}{\miv\normal}$, thanks to equation \eqref{seqe} and the fact that $\miv \in \Hdiv{\Omega_{h}}$ (we recall that we are assuming $k\geq 1$). Thus, with the decomposition of $\miroh$, equation \eqref{seqa} yields 
\begin{align}
\ipt{\miA\misigmah}{\miv}+\ipt{\miroh^{0}}{\miv}+\ipt{\miroh^{c}}{\miv}-\ipg{\gtildeh}{\miv\normal}=0.\label{Eq2l3}
\end{align}
Moreover, taking $\migama := \miroh^{c}$ in \eqref{eq2lem5} we have $\ipt{\miroh^{c}}{\miv} = \normDh{\miroh^{c}}{2}$ and  from Equation \eqref{Eq2l3} we obtain 
\begin{align*}
\normDh{\miroh^{c}}{2}
&=  -\ipt{\miA\misigmah}{\miv}-\ipt{\miroh^{0}}{\miv}-
\sumE\ipe{l^{-1/2}\gtildeh}{l^{1/2}\miv\normal}.
\end{align*}

Using the Cauchy-Schwarz inequality, Lemma \ref{auxlemA3}, \eqref{bs1}, \eqref{eq3lem5}, the discrete trace inequality \eqref{auxlemTr}, the facts that $\lx \leq \He$ for all $\boldsymbol{x}\in e$ and $h_e^\perp\leq \gamma h_e$, \eqref{bs1}, Lemma \ref{auxlemA3} and Assumption \ref{C4}, we deduce that
\begin{align*}
\normDh{\miroh^{c}}{2}\leq& \Bigg((2\mu)^{-1} \normDh{\misigmah}{}+\normDh{\miroh^{0}}{}
+\Bigg(\sumE {(\Ctr)}^{2}r_e \gamma 
\normel{\gtildeh}{2}\Bigg)^{1/2} \Bigg) \normDh{\miv}{}\\
\leq & \Bigg((2\mu)^{-1} (1+C^0)\normDh{\misigmah}{}
+\dfrac{1}{4C^c}\left(\dfrac{M(h,\tau,\mu) C_\rho}{2 } \right)^{1/2}
\normel{\gtildeh}{}\Bigg)C^c\normDh{\miroh^{c}}{}.
\end{align*}

Thus, considering the decomposition \eqref{rhodesc} and gathering the estimates in steps 1 and 2, we have
\begin{align*}
\normDh{\miroh}{}\leq&(2\mu)^{-1}\left((1+C^0)C^c+C^0\right)\normDh{\misigmah}{}
+\frac{1}{2}\left(\dfrac{M(h,\tau,\mu) C_\rho}{2} \right)^{1/2}
\normel{\gtildeh}{}.
\end{align*}
The result follows by recalling that $C_\rho=2(2\mu)^{-2}\left(C^0+(1+C^0)C^c\right)^2$.

\end{proof}

We are now in position to prove the main result of this section.

\begin{theorem}\label{eutheo} If the set of Assumptions {\rm C} is satisfied and $k \geq 1$, then the scheme \eqref{sq} has a unique solution.  
\end{theorem}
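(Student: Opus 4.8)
The plan is the following. Since \eqref{sq} is a square linear system, it suffices to prove that $\mif=\Cero$ and $\mig=\Cero$ force $(\misigmah,\miuh,\miroh,\miug)=(\Cerot,\Cero,\Cerot,\Cero)$. The starting point is Lemma \ref{eulem1}: the energy identity \eqref{energy1} shows that $\miT{}{}=\nDhA{\misigmah}{2}+\|\miuh-\miug\|_{\partial\mathcal{T}_h,\tau}^2\ge 0$, while \eqref{energy2} bounds $\normDh{\misigmah}{2}+\|\miuh-\miug\|_{\partial\mathcal{T}_h,\tau}^2$ by $M(h,\tau,\mu)\,\miT{}{}$ plus a boundary term which, by Assumption \ref{C5}, is at most $\tfrac1{16}M(h,\tau,\mu)\,\normgl{\gtildeh}{2}$. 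Hence everything reduces to estimating $\miT{}{}$ from above in a way that can be absorbed by the left-hand side of \eqref{energy2}.

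To do so I would first apply Lemma \ref{eulem3} (legitimate since $k\ge1$ and \ref{C4} holds) to control $\normDh{\miroh}{}$, and then insert the squared bound (using $(a+b)^2\le 2a^2+2b^2$ together with $C_\rho=2(2\mu)^{-2}(C^0+(1+C^0)C^c)^2$) into estimate \eqref{eqT2} of Corollary \ref{eulem7} (legitimate under \ref{C0}--\ref{C2}): the term $\tfrac1{4M(h,\tau,\mu)C_\rho}\normDh{\miroh}{2}$ appearing there is then dominated by $\tfrac1{4M(h,\tau,\mu)}\normDh{\misigmah}{2}+\tfrac1{16}\normgl{\gtildeh}{2}$, and using $\normga{\miuh-\miug}{2}\le\|\miuh-\miug\|_{\partial\mathcal{T}_h,\tau}^2$ this turns \eqref{eqT2} into
\[
\miT{}{}=|\miT{}{}|\le -\tfrac{7}{16}\normgl{\gtildeh}{2}+\tfrac1{2M(h,\tau,\mu)}\normDh{\misigmah}{2}+\tfrac1{2M(h,\tau,\mu)}\|\miuh-\miug\|_{\partial\mathcal{T}_h,\tau}^2 .
\]
Multiplying by $M(h,\tau,\mu)$ and substituting into \eqref{energy2} (together with the \ref{C5}-bound above), the $\normDh{\misigmah}{2}$ and $\|\miuh-\miug\|_{\partial\mathcal{T}_h,\tau}^2$ contributions get absorbed on the left, leaving $\tfrac12\big(\normDh{\misigmah}{2}+\|\miuh-\miug\|_{\partial\mathcal{T}_h,\tau}^2\big)\le -\tfrac38 M(h,\tau,\mu)\normgl{\gtildeh}{2}\le 0$. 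Since the left-hand side is nonnegative and $M(h,\tau,\mu)>0$, this forces $\misigmah=\Cerot$ in $\Omega_h$, $\gtildeh=\Cero$ on $\Gamma_h$ (as $l>0$), and $\miuh=\miug$ on $\partial\mathcal{T}_h$ (as $\tau>0$); Lemma \ref{eulem3} then gives $\normDh{\miroh}{}\le 0$, i.e. $\miroh=\Cerot$.

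To finish, I would recover $\miuh$ and $\miug$ by the standard HDG energy trick. With $\misigmah=\Cerot$ and $\miroh=\Cerot$, equation \eqref{seqa} collapses to $\ipt{\miuh}{\midiv\miv}-\ipb{\miug}{\miv\normal}=0$ for all $\miv\in\miVh{}$; testing with $\miv=\nabla\miuh$ (the broken gradient, which on each $K$ lies in $\Pkt{K}\subset\miV{K}$), integrating by parts elementwise and using $\miuh=\miug$ on $\partial\mathcal{T}_h$, gives $\normth{\nabla\miuh}{2}=0$, so $\miuh$ is piecewise constant; single-valuedness of $\miug$ on interior faces then makes $\miuh$ continuous and hence constant on the connected domain $\Omega_h$, while \eqref{seqe} with $\gtildeh=\Cero$ gives $\miug=\Cero$ on $\Gamma_h$, forcing that constant to vanish. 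Thus $\miuh=\miug=\Cero$ and, by \eqref{seqf}, $\misigmag\normal=\Cero$ as well, so the homogeneous problem has only the trivial solution. The only delicate point is the constant bookkeeping in the middle step: the explicit thresholds ($\tfrac18$, $\tfrac15$, $\tfrac1{16}$, $\ldots$) in Assumptions \ref{C1}--\ref{C5} are precisely calibrated so that the $\misigmah$, $\miroh$ and $\miuh-\miug$ terms generated when bounding $\miT{}{}$ come out strictly smaller than their counterparts on the left of \eqref{energy2}; no idea beyond the already-established lemmas is required.
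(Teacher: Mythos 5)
Your proposal is correct and follows essentially the same route as the paper: combine the energy identities \eqref{energy1}--\eqref{energy2} with the bound \eqref{eqT2} on $\miT{}{}$, absorb the rotation via Lemma \ref{eulem3} and the boundary term via Assumption \ref{C5}, conclude $\misigmah=\Cerot$, $\gtildeh=\Cero$, $\miuh=\miug$, then $\miroh=\Cerot$, and finally recover $\miuh=\miug=\Cero$ from the residual form of \eqref{seqa}. Your explicit constant bookkeeping and the careful final step (testing with the broken gradient to get $\nabla\miuh=\Cerot$ elementwise, then using single-valuedness of $\miug$) match, and in the last step slightly sharpen, what the paper does.
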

\begin{proof} We combine  and \eqref{energy2}, \eqref{eqT2}, \eqref{rho} and Assumptions C to obtain that
\begin{align*}
\dfrac{1}{4}\|\misigmah\|_{\Omega_h}^2+\frac{1}{2}\|\miuh-\miug\|_{\partial\mathcal{T}_h,\tau}^2&+
M(h,\tau,\mu)\frac{1}{16}\normgl{\gtildeh}{2}
\leq 
 0.
\end{align*}

Thus, we have $\misigmah=\Cerot$ in $\Omega_{h}$, $\gtildeh=\Cero$ in $\Gamma_{h}$ and $\miug=\miuh$ in $\partial \mathcal{T}_h$. In addition, by Lemma \ref{eulem3} we conclude that $\miroh=\Cerot$. Finally, from \eqref{seqa} we now have 
$$\ipt{\miuh}{\midiv\miv}-\ipb{\miuh}{\miv\normal}=0$$ for all $\miv \in \miVh{}$, which implies, after integration by parts, that $\nabla\miuh$ is constant. By \eqref{seqe} $\miuh=\Cero$ in $\Gamma_{h}$ and the fact that  $\miug=\miuh$, we conclude that $\miuh=\Cero$ in $\Omega_{h}$.
\end{proof}


\section{Error analysis}\label{sec:error}

In this section we provide \textit{a priori} error estimates for our HDG scheme. To that end, we employ the tools of the projection-based analysis of HDG method introduced for the diffusion problem \cite{diffusion}, combined with the methodology in the analyses in \cite{CQS} and \cite{Ke}. We also consider the set of Assumptions C to holds true, however the constants $C^0$ and $C^c$ are not necessarily the same and the values fractions in the right hand side of the inequalities in \ref{C0}-\ref{C5} might be different as well.

\subsection{HDG projection}
At this point, it is necessary to recall the HDG projection. On each element $K$, for $(\misigmac,\miuc)\in \Hv{1}{K}\times\Ht{1}{K}$, we consider the projection $(\pivD\misigmac,\piwD\miuc)\in \Pt{k}{K}\times\Pv{k}{K}$ such that 
\begin{subequations}
\begin{align}
\ipk{\pivD \misigmac}{\miv } &= \ipk{\misigmac}{\miv} &\forall\,\miv \in \Pt{k-1}{K}, \label{Proa}\\
\ipk{\piwD \miuc}{\miw} &= \ipk{\miuc}{\miw}  &\forall \,\miw \in \Pv{k-1}{K}, \label{Prob}\\
\ipe{(\pivD \misigmac)\normal  - \tau(\piwD \miuc\,\nT)\normal}{\mimu} &= \ipe{\misigmac\normal - \tau(\proMD \miuc\, \nT)\normal}{\mimu} &\forall \mimu \in \miMD{e},\label{Proc}
\end{align}
\end{subequations}
for all faces $e$ of the element $K$, where $\proMD{}$ denotes the $L^{2}$ projection onto $\miM{e}$. Theorem 2.1 in \cite{diffusion} allows us to conclude that this projection is well-defined. Moreover, if  $(\misigmac,\miuc)\in \Ht{k+1}{K}\times\Hv{k+1}{K}$, then
\begin{subequations}\label{projerrors}
\begin{align}
\normk{\pivD\misigmac-\misigmac}{}&\lesssim  h_{K}^{k+1}(|\miuc|_{\Hv{k+1}{K}}+|\misigmac|_{\Ht{k+1}{K}}),\label{EProa}\\
\normk{\piwD\miuc-\miuc}{}&\lesssim  h_{K}^{k+1}(|\miuc|_{\Hv{k+1}{K}}+|\midiv\misigmac|_{\Ht{k}{K}}). \label{EProb}
\end{align}

On the other hand, on each element $K$, we denote by $\piA\miroc$ the $\Ltt{K}$-projection of $\miroc \in \Ltt{K}$ into $\miAc{K}$. If $\miroc \in \Ht{k+1}{K}$, then
\begin{align}
\normk{\piA\miroc-\miroc}{}&\lesssim  h_{K}^{k+1}|\miroc|_{\Hv{k+1}{K}}.\label{EProc} 
\end{align}
\end{subequations}

We define the projections of the errors 
$
\esigma :=\pivD\misigmac-\misigmah$,  $\eu := \piwD\miuc-\miuh$, $\erho := \piA\miroc-\miroh$, $\eug := \proMD\miuc-\miug$, $\esigmag\normal := \proMD(\misigmac \normal)-\misigmag\normal$,
and the projection errors,
$ \dsigma :=\misigmac-\pivD\misigmac$, $\du :=\miuc-\piwD\miuc$, $\drho :=\miroc-\piA\miroc$.
Moreover, it is convenient to define the following auxiliary quantity related to the projection errors:
\begin{align}\label{Teta}
\Teta :=\,\bigg(&\normDh{\dsigma}{2}+\normDh{\drho}{2}+\normDhc{\dn{\dsigma}}{2}
+\normDhc{\dn{\drho}}{2}\\
&+\normgpl{\fAdsigmawx}{2}
+\normgpl{\fdrhowx}{2}
+\normgpl{\miA\dsigma\normal}{2}
+\normgpl{\drho\normal}{2}\bigg)^{1/2}.
\nonumber 
\end{align}
\begin{lemma} If $(\misigmac,\miroc)\in [\Ht{k+1}{\Omega}]^2$, then
\begin{align}
 \Teta \lesssim h^{k+1}\left( |\misigmac|_{\Ht{k+1}{\Omega}}+|\miroc|_{\Ht{k+1}{\Omega}}\right). \label{Tetaest}
\end{align}
\end{lemma}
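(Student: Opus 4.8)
The plan is to bound each of the eight terms inside the square root defining $\Teta$ (cf. \eqref{Teta}) separately by a multiple of $h^{k+1}\big(|\misigmac|_{\Ht{k+1}{\Omega}}+|\miroc|_{\Ht{k+1}{\Omega}}\big)$, and then combine them. The first two terms, $\normDh{\dsigma}{}$ and $\normDh{\drho}{}$, are immediate: sum the local projection estimates \eqref{EProa} and \eqref{EProc} over $K\in\Trian$, using $h_K\le h$, and use that $\midiv\misigmac$ is controlled by $|\misigmac|_{\Ht{k+1}{\Omega}}$ (via the PDE $\midiv\misigmac=\mif$ and elliptic regularity, or simply by $|\misigmac|_{\Ht{k}{\Omega}}\le |\misigmac|_{\Ht{k+1}{\Omega}}$). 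So the bulk of the work is the six remaining terms, which all live on the extrapolation regions $K^e_{ext}$ or on the computational boundary faces $e$, and which involve normal derivatives and the auxiliary operator $\pmb{\Lambda}^{(\cdot)}$.

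For the terms on $\Omega_h^c$, namely $\normDhc{\dn{\dsigma}}{}$ and $\normDhc{\dn{\drho}}{}$, I would first observe that since $\dsigma$ and $\drho$ are polynomials on $K^e$ (being the difference of a polynomial with the restriction to $K^e$ of a smooth function, extrapolated), I can pass their normal derivatives from the extrapolated region back to $K^e$ using the inverse-type estimate built into $\Cinv$ (cf. \eqref{Cext}) together with the extension constant $\Cext$, exactly as in the analogous computations in \cite{CQS}. Concretely, $\normKext{\partial_{\nore}(\dsigma\nore)}{}{}\lesssim (\he)^2 (\he)^{-2} \mir\, \normKe{\dsigma}{2}$-type bounds, i.e. one uses $\|p\|_{K_{ext}^e}\lesssim \sqrt{\mir}\|p\|_{K^e}$ and $\|\partial_{\nore}p\|_{K^e}\lesssim (\he)^{-1}\|p\|_{K^e}$; the weight $(h^\perp_e)^2$ in the $\normDhc{\cdot}{}$ norm cancels the $(\he)^{-2}$ from differentiating twice, leaving a clean multiple of $\normKe{\dsigma}{}$ (resp. $\normKe{\drho}{}$), which is then controlled by \eqref{EProa}, \eqref{EProc}. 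Using $\mir\le C$ (Assumption \ref{C0}) keeps the constant uniform. For the four boundary-face terms $\normgpl{\fAdsigmawx}{}$, $\normgpl{\fdrhowx}{}$, $\normgpl{\miA\dsigma\normal}{}$, $\normgpl{\drho\normal}{}$, I would handle the first two via the estimates \eqref{auxlem1}--\eqref{auxlemD} applied row-by-row to the polynomials $\dsigma$, $\drho$, which directly give $\normgpl{\pmb{\Lambda}^{\miA\dsigma}}{}\lesssim \mir^{3/2}\Cext\Cinv\normKe{\miA\dsigma}{}\lesssim \normKe{\dsigma}{}$ (the $\miA$ is a fixed SPD tensor with $\mu$-dependent but $\lambda$-independent norm, absorbed into the constant, or more carefully one uses Lemma \ref{auxlemA3}); and the last two via the discrete trace inequality \eqref{auxlemTr} together with $l(\ex)\le\He\le\mir\he\le\gamma\mir h_e$, which converts the $l$-weighted face norm into $(\mir)\,h_e\cdot h_e^{-1}\normKe{\dsigma}{2}=\mir\,\normKe{\dsigma}{2}$, again bounded by \eqref{EProa}.

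Once every term is shown to be $\lesssim \sum_{e\in\Eh{\partial}}\big(\normKe{\dsigma}{2}+\normKe{\drho}{2}\big)\lesssim \normDh{\dsigma}{2}+\normDh{\drho}{2}$ (after noting each $K^e$ is counted boundedly many times by shape-regularity), the conclusion follows by taking square roots and invoking \eqref{EProa} and \eqref{EProc} summed over the mesh with $h_K\le h$. The main obstacle I anticipate is purely bookkeeping rather than conceptual: carefully tracking that none of the constants hidden in $\Cext$, $\Cinv$, $\Ctr$, $\gamma$, $\mir$ secretly depends on $\lambda$, and that the factor $\CA=1/(2\mu)$ appearing when $\miA$ acts on $\dsigma$ is legitimately a fixed ($h$- and $\lambda$-independent) constant — here Lemma \ref{auxlemA3} is the right tool, since it shows $\norm{\miA\dsigma}{D}{}\le(2\mu)^{-1}\norm{\dsigma}{D}{}$ with no $\lambda$ — so that the final constant in \eqref{Tetaest} depends only on $k$, the shape-regularity $\gamma$, the closeness ratio bound in \ref{C0}, and $\mu$, but not on $\lambda$ or $h$. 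A secondary technical point is justifying the extrapolation bounds on $K^e_{ext}$ for the \emph{projection error} $\dsigma$ rather than for a bare polynomial: one writes $\dsigma|_{K^e_{ext}}=(\misigmac-\pivD\misigmac)|_{K^e_{ext}}$, but on $K^e_{ext}$ the quantity $\pivD\misigmac$ is the extrapolated polynomial while $\misigmac$ is the genuine smooth function, so one must either argue directly on the polynomial part and treat the smooth part by standard Sobolev trace/extension estimates on the thin layer $\Omega_h^c$ of width $O(h^2)$ (which is how \cite{CQS} proceeds), or equivalently absorb the smooth contribution into the $h^{k+1}|\misigmac|_{\Ht{k+1}{\Omega}}$ bound directly; either route works and I would follow the one in \cite{CQS} verbatim.
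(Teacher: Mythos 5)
Your overall strategy --- bound each of the eight terms in \eqref{Teta} separately and reduce everything to the projection estimates \eqref{projerrors} --- is exactly the paper's, and you correctly flag the one genuinely delicate point (that $\dsigma$ and $\drho$ are \emph{not} polynomials on $K^e_{ext}$) together with the correct resolution (Lemma~3.8 of \cite{CQS}). The problem is that, as written, most of your concrete steps rest on tools that are only valid for polynomials: the estimates \eqref{auxlem1}--\eqref{auxlemD}, the constants $\Cinv$ and $\Cext$ in \eqref{Cext}, and the discrete trace inequality \eqref{auxlemTr} all require the argument to lie in $\Pkt{K^e}$ or $\miV{K^{e}}\normal$, whereas $\dsigma=\misigmac-\pivD\misigmac$ contains the genuine smooth function $\misigmac$. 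Your closing caveat repairs this for the $\Omega_h^c$-terms and the $\pmb{\Lambda}$-terms --- and the paper's route is precisely the non-polynomial version of Lemma~5.2 of \cite{CQS} (estimate \eqref{Alem1}, whose right-hand side is $r_e\normKext{\dn{\boldsymbol{\delta}_{\miv}}}{}$, not a norm over $K^e$) chained with Lemma~3.8 of \cite{CQS} (estimate \eqref{3.8CQS}); there is no passage back to $K^e$ by inverse estimates at all.

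What your caveat does \emph{not} cover are the face terms $\normgpl{\miA\dsigma\normal}{}$ and $\normgpl{\drho\normal}{}$. Even though the face $e$ lies on $\partial K^e$ rather than in the extrapolated region, the discrete trace inequality \eqref{auxlemTr} still cannot be applied to the non-polynomial $\dsigma$; one must use the scaled continuous trace inequality, which produces an additional gradient contribution, as in the paper's estimate \eqref{Alem1c}: $\normgpl{\boldsymbol{\delta}_{\miv}\normal}{}\lesssim \normDh{\boldsymbol{\delta}_{\miv}}{}+h\normDh{\nabla\boldsymbol{\delta}_{\miv}}{}$. This changes the form of your intermediate bound (it is no longer a ``clean multiple of $\normKe{\dsigma}{}$''), although the final order is unaffected since $\normDh{\nabla\dsigma}{}=O(h^{k})$ and the prefactor $h$ restores $O(h^{k+1})$. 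With these substitutions --- and with Lemma~\ref{auxlemA3} used, as you say, to strip the $\miA$ at the cost of a $\lambda$-independent factor $(2\mu)^{-1}$ --- your argument coincides with the paper's.
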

\begin{proof}
First of all, we state Lemma 3.8 of \cite{CQS} applied to any vector-valued function $\miv \in \Ht{k+1}{\Omega}$:
\begin{align}\label{3.8CQS}
\normDhc{\dn{\boldsymbol{\delta}_{\miv}}}{} & \lesssim  \normDh{\boldsymbol{\delta}_{\miv}}{} + h^{k+1}|\miv|_{\Ht{k+1}{\Omega}}.
\end{align}
We also recall Lemma 5.2 of \cite{CQS}: For each $e \in \mathcal{E}_{h}^{\partial}$, 
\begin{eqnarray}\label{Alem1}
\norm{\pmb{\Lambda}^{\boldsymbol{\delta}_{\miv{}}}}{e,l}{}  \leq \frac{1}{\sqrt{3}}r_e\normKext{\dn{\boldsymbol{\delta}_{\miv}}}{},
\end{eqnarray}
which, together with \eqref{3.8CQS} and Assumption \ref{C0}, implies
\begin{eqnarray}\label{Alem1b}
\norm{\pmb{\Lambda}^{\delta_{\miv{}}}}{\Gamma_h,l}{}  \lesssim \normDhc{\dn{\delta_{\miv}}}{} \lesssim  \normDh{\boldsymbol{\delta}_{\miv}}{} + h^{k+1}|\miv|_{\Ht{k+1}{\Omega}}.
\end{eqnarray}

On the other hand, by a scaling argument, trace inequality, the facts that $\lx \leq \He\leq \mir h_e^\perp$ and $h_e^\perp\leq \gamma h_e$, and Assumption \ref{C0},  it is possible to show that
\begin{eqnarray}\label{Alem1c}
\normgpl{\boldsymbol{\delta}_{\miv}\normal}{}\lesssim \normDh{\boldsymbol{\delta}_{\miv}}{} + h \normDh{\nabla \boldsymbol{\delta}_{\miv}}{} .
\end{eqnarray}

The result follows after considering \eqref{3.8CQS}-\eqref{Alem1c} for $\miv=\miroc$ and $\miv=\misigmac$, where for the latter case Lemma \ref{auxlemA3} is also employed.
\end{proof}


\subsection{Main result}\label{secmain}

We now state the error estimates of our methods and postpone their proof to Section \ref{proofs}.

\begin{theorem}\label{EAt3}If $k \geq 1$ and the set of Assumptions {\rm C} holds, then 
\begin{subequations}
\begin{eqnarray}\label{Theo1}
\bignorm+\normDh{\erho}{} \lesssim (1+M(h,\tau,\mu))\Teta,
 \end{eqnarray}
where, 
$$
\bignorm\hspace{-0.1cm}:= \hspace{-0.1cm}\left(\normDh{\esigma}{2}+\|\eu-\eug\|_{\partial\mathcal{T}_h,\tau}^2+\normgl{\gtilde-\gtildeh}{2}\right)^{\frac{1}{2}}.
$$
Moreover, if elliptic regularity holds, then
\begin{eqnarray}\label{Theo1a}
\normDh{\eu}{}
\lesssim   \left(h+    R^{1/2}h^{1/2} \left(1+\tau\right) 
+   h^{1/2} R\right)\Teta
\end{eqnarray}
and
\begin{eqnarray}\label{Theo1b} \norm{\eug}{h}{}\lesssim   h \Teta +  \normDh{\eu}{}.
\end{eqnarray}

\end{subequations}
\end{theorem}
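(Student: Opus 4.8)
The plan is to obtain the estimates in Theorem~\ref{EAt3} from an energy argument on the error equations, following the same structure as the wellposedness proof in Section~\ref{ChapAnEr}, with the projection errors $\Teta$ replacing zero on the right-hand side. First I would derive the error equations: since the projection $(\pivD\misigmac,\piwD\miuc,\piA\miroc,\proMD\miuc|_{\Eh{}})$ and the true solution both satisfy analogues of \eqref{sq} up to consistency terms coming from the projection definitions \eqref{Proa}--\eqref{Proc} and the boundary transfer \eqref{defg}--\eqref{seqg}, subtracting from the discrete equations yields a system for $(\esigma,\eu,\erho,\eug)$ of the same algebraic form as \eqref{seqa}--\eqref{seqg} but with forcing terms built from $\dsigma,\du,\drho$ and the integrals along the transferring paths of $\miA\dsigma+\drho$. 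In particular the discrete boundary datum error satisfies $(\gtilde-\gtildeh)(\ex) = -\int_0^{\lx}(\miA\esigma+\erho)(\ex+s\tx)\tx\,ds - \int_0^{\lx}(\miA\dsigma+\drho)(\ex+s\tx)\tx\,ds$, which after using the auxiliary function $\pmb{\Lambda}^{\miv}$ in \eqref{Lambda} splits into extrapolation terms in $\esigma,\erho$ (controlled by \eqref{auxlem1}, \eqref{auxlemD}) and projection-error terms absorbed into $\Teta$ via \eqref{Alem1b}, \eqref{Alem1c}.

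Next I would run the energy identity: testing the error equations with $(\esigma,\eu,\erho,\mimu)$ as in Step~1 of Lemma~\ref{eulem1}, integration by parts and the stabilization term produce $\nDhA{\esigma}{2} + \|\eu-\eug\|_{\partial\mathcal{T}_h,\tau}^2 = \mathbb{T}^{\,} + (\text{projection consistency terms})$, where $\mathbb{T}^{\,}=\ipg{\gtilde-\gtildeh}{\esigmag\normal}$ and the consistency terms are bounded by $\Teta$ times the energy norm using Cauchy--Schwarz and Young. Then I would mimic Lemma~\ref{eulem6} and Corollary~\ref{eulem7} to decompose and bound $\mathbb{T}^{\,}$: the same six-term split applies with $\gtilde-\gtildeh$ in place of $\gtildeh$ and with the extra $\miA\dsigma+\drho$ integrand, so the bounds \eqref{eqT1}--\eqref{eqT2} carry over, now leaving a $-\tfrac12\normgl{\gtilde-\gtildeh}{2}$ on the left, small multiples of $\normDh{\esigma}{2}$, $\normDh{\erho}{2}$, $\|\eu-\eug\|_{\partial\mathcal{T}_h,\tau}^2$ (absorbed by Assumptions~C), plus a $C\,M(h,\tau,\mu)\,\Teta^2$ contribution. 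I also need the analogue of the orthogonal-decomposition argument (Step~2 of Lemma~\ref{eulem1}) using Lemma~\ref{lemma-condition} to control $\normDh{\esigma}{}$ by the tangential stabilization norm and $\norm{\esigma}{\Omega_h,\miA}{}$, noting that $\langle\gtilde-\gtildeh,\normal\rangle_{\Gamma_h}$ is again nonzero and contributes the $(n\lambda+2\mu)^2$-type term controlled by \ref{C5}; and the analogue of Lemma~\ref{eulem3} bounding $\normDh{\erho}{}$ by $\normDh{\esigma}{}$ and $\normel{\gtilde-\gtildeh}{}$ via the decomposition \eqref{rhodesc} and Lemmas 2.8 and 3.9 of \cite{Guz}, where the right-hand side of \eqref{seqa} now also carries projection-error terms bounded by $\Teta$. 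Combining these three ingredients exactly as in the proof of Theorem~\ref{eutheo}, but with $C\,M(h,\tau,\mu)\Teta^2$ on the right instead of $0$, gives \eqref{Theo1}: $\bignorm + \normDh{\erho}{}\lesssim (1+M(h,\tau,\mu))\Teta$, after taking square roots and using \eqref{rho}.

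For \eqref{Theo1a} I would use a duality (Aubin--Nitsche) argument: let $(\pmb{\phi},\pmb{\theta},\underline{\pmb{\zeta}})$ solve the adjoint elasticity problem with data $\eu$ on $\Omega_h$, use elliptic regularity $\|\pmb{\phi}\|_{2}+\cdots\lesssim\|\eu\|_{\Omega_h}$, insert its HDG projection into the error equations, and integrate by parts to express $\|\eu\|_{\Omega_h}^2$ as a sum of terms pairing the energy-norm errors $\esigma,\eu-\eug,\gtilde-\gtildeh,\erho$ against projection errors of the dual solution (which give the $h$ factor) and against the transferring-path residuals of the dual solution (which give the $R^{1/2}h^{1/2}$ and $h^{1/2}R$ factors and the $\tau$ dependence through the stabilization pairing). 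Applying \eqref{Theo1} to bound the energy-norm errors by $\Teta$ yields \eqref{Theo1a}. Finally \eqref{Theo1b} follows from a discrete trace/triangle-inequality argument: $\norm{\eug}{h}{}\le \norm{\eug-\piwD\miuc|_{\Dtrian}+(\piwD\miuc-\miuh)|_{\Dtrian}}{h}{}+\cdots$ combined with \eqref{Proc}, the discrete trace inequality \eqref{auxlemTr}, the bound on $\|\eu-\eug\|_{\partial\mathcal{T}_h,\tau}$ from \eqref{Theo1}, and the projection estimate \eqref{EProb}, giving $\norm{\eug}{h}{}\lesssim h\Teta + \normDh{\eu}{}$.

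I expect the main obstacle to be the $\lambda$-uniform bookkeeping in the duality argument for \eqref{Theo1a}: one must ensure that every appearance of $\miA$, $\miA^{-1}$, or $(n\lambda+2\mu)$ is routed through Lemma~\ref{auxlemA3} and the deviatoric splitting \eqref{dev-aux}, exactly as in Lemma~\ref{lemma-condition}, so that the dual elliptic-regularity constant and all intermediate constants stay independent of $\lambda$; the transferring-path residual terms in the duality pairing, handled through $\pmb{\Lambda}^{\boldsymbol{\delta}_{\miv}}$ and \eqref{Alem1b}, are where a hidden $\lambda$ could most easily creep in, and the appearance of $R$ (not just $R^{1/2}$) in \eqref{Theo1a} signals that a second application of the extrapolation estimates with a less favorable power of $r_e$ is needed there.
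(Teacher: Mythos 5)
Your proposal follows essentially the same route as the paper: an energy identity for the error equations mirroring Lemma \ref{eulem1} (including the orthogonal decomposition via Lemma \ref{lemma-condition} and the $\langle\gtilde-\gtildeh,\normal\rangle_{\Gamma_h}$ term controlled by Assumption (C.5)), the same six-term decomposition and bound of $\mathbb{T}$, the same bound for $\normDh{\erho}{}$ with projection errors added, and then a duality argument for \eqref{Theo1a} with the $\eug$ bound by standard HDG arguments. The only minor imprecision is in the power of $M(h,\tau,\mu)$ multiplying $\Teta^2$ before taking square roots, which is a bookkeeping detail and does not affect the stated conclusion.
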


Let us point out that Theorem \eqref{EAt3} generalizes the corresponding estimate in the polyhedral case. In fact, if $\Omega$ is polyhedral and the triangulation is fitted to it, then 
$\gtilde=\gtildeh$ and $R=0$, and we recover the estimates provided in \cite{Ke}. Moreover, in contrast to the estimates in \cite{Ke}, our estimates do not depend on $\lambda$.

\begin{corollary}
\label{EAt3cor}Let us suppose that $k \geq 1$, $\tau$ is of order one and the set of Assumptions {\rm C} holds true. If $(\misigmac,\miuc,\miroc{})\in \Ht{k+1}{\Omega}\times\Hv{k+1}{Omega}\times \Ht{k+1}{\Omega}$, then 
\begin{align*}
\normDh{\misigmac-\misigmah}{}+\normDh{\miroc-\miroh}{}
\lesssim h^{k+1}\left( |\misigmac|_{\Ht{k+1}{\Omega}}+|\miroc|_{\Ht{k+1}{\Omega}}\right).
\end{align*}
Moreover, if elliptic regularity holds, then
\[\normDh{\miuc-\miuh}{}\lesssim  h^{k+1}\left( |\misigmac|_{\Ht{k+1}{\Omega}}+|\miroc|_{\Ht{k+1}{\Omega}}+|\miuc|_{\Hv{k+1}{\Omega}}\right) \]
and
\[
\norm{\eug}{h}{}\lesssim  \left(h+
 R^{1/2}h^{1/2}\right)h^{k+1}\left( |\misigmac|_{\Ht{k+1}{\Omega}}+|\miroc|_{\Ht{k+1}{\Omega}}+|\miuc|_{\Hv{k+1}{\Omega}}\right).
\]

\begin{proof} Since $\misigmac-\misigmah = \esigma + \dsigma$,  $\miroc-\miroh = \esigma + \dsigma$ and $\miuc-\miuh = \eu + \du$, the result is a direct consequence of previous theorem, triangle inequality, the estimate in \eqref{Tetaest} and projection error estimates in \eqref{projerrors}. 
\end{proof}
\end{corollary}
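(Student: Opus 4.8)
The plan is to derive Corollary \ref{EAt3cor} as a direct consequence of Theorem \ref{EAt3}, the bound \eqref{Tetaest} on $\Teta$, and the projection error estimates \eqref{projerrors}, by splitting each true error into its projection-error part and its projection-of-error part and controlling each separately. The starting point is the triangle-type decompositions that the corollary itself records: $\misigmac-\misigmah = \dsigma + \esigma$, $\miroc-\miroh = \drho + \erho$, and $\miuc-\miuh = \du + \eu$. (Note the statement as written contains a typo, writing $\esigma+\dsigma$ for the rotation error; the intended splitting uses $\drho$ and $\erho$.) Thus by the triangle inequality it suffices to bound, in the $\normDh{\cdot}{}$ norm, the four quantities $\dsigma$, $\esigma$, $\drho$, $\erho$ for the stress/rotation estimate, and likewise $\du$, $\eu$ for the displacement estimate.

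First I would handle the projection-error terms $\dsigma$, $\drho$, $\du$. Since by hypothesis $(\misigmac,\miuc,\miroc)\in \Ht{k+1}{\Omega}\times\Hv{k+1}{\Omega}\times\Ht{k+1}{\Omega}$, the projection error estimates \eqref{EProa}, \eqref{EProb}, \eqref{EProc} give, after summing over $K\in\Trian$ and using shape-regularity so that each $h_K\le h$,
\begin{align*}
\normDh{\dsigma}{}+\normDh{\drho}{}+\normDh{\du}{}
\lesssim h^{k+1}\left(|\misigmac|_{\Ht{k+1}{\Omega}}+|\miroc|_{\Ht{k+1}{\Omega}}+|\miuc|_{\Hv{k+1}{\Omega}}\right),
\end{align*}
where for the $\dsigma$ bound one invokes \eqref{EProa}, for $\drho$ one invokes \eqref{EProc}, and for $\du$ one invokes \eqref{EProb} together with the identity $\midiv\misigmac=\mif$ so that $|\midiv\misigmac|_{\Ht{k}{K}}$ is controlled by the data (or absorbed into the stated seminorms).

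Next I would handle the projection-of-error terms $\esigma$, $\erho$, $\eu$ by invoking Theorem \ref{EAt3}. Under the standing assumptions ($k\ge1$, Assumptions C, and $\tau$ of order one so that $M(h,\tau,\mu)$ and $R$ are $O(1)$) the right-hand sides of \eqref{Theo1}, \eqref{Theo1a}, \eqref{Theo1b} reduce to $\lesssim \Teta$, $\lesssim h^{1/2}\Teta$, and $\lesssim h\Teta+\normDh{\eu}{}$ respectively, all with $h$-independent constants. Combining these with the bound \eqref{Tetaest}, namely $\Teta \lesssim h^{k+1}(|\misigmac|_{\Ht{k+1}{\Omega}}+|\miroc|_{\Ht{k+1}{\Omega}})$, yields $\normDh{\esigma}{}+\normDh{\erho}{}\lesssim h^{k+1}(\cdots)$ and $\normDh{\eu}{}\lesssim h^{k+3/2}(\cdots)\lesssim h^{k+1}(\cdots)$; adding the projection-error bounds above and applying the triangle inequality gives the three displayed conclusions, the $\eug$ estimate following from \eqref{Theo1b} combined with the $\normDh{\eu}{}$ bound and $\Teta\lesssim h^{k+1}(\cdots)$.

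The only genuinely delicate point is bookkeeping rather than analysis: one must confirm that, with $\tau=O(1)$, every factor $(1+M(h,\tau,\mu))$, $R^{1/2}$, $R$, and $(1+\tau)$ appearing in Theorem \ref{EAt3} is bounded independently of $h$ and $\lambda$ (the $\lambda$-independence is exactly the feature emphasized after Theorem \ref{EAt3}), so that these prefactors do not degrade the $h^{k+1}$ rate; the fractional power $h^{1/2}$ in the displacement estimate only improves the order, so it is harmless. I would therefore present the proof as a short three-step combination — projection errors via \eqref{projerrors}, projection-of-errors via Theorem \ref{EAt3}, and $\Teta$ via \eqref{Tetaest} — with a remark that the order-one choice of $\tau$ is what collapses the prefactors, and there is no further obstacle to overcome.
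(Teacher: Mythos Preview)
Your proposal is correct and follows essentially the same approach as the paper's own proof: split each true error into projection-error plus projection-of-error, bound the former by \eqref{projerrors} and the latter by Theorem \ref{EAt3} together with \eqref{Tetaest}, and absorb the prefactors using $\tau=O(1)$ and Assumption \ref{C0}. Your version is simply a fleshed-out rendition of the paper's one-sentence proof, and you even catch the typo in the rotation splitting ($\drho+\erho$, not $\dsigma+\esigma$).
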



\section{Proofs of the error estimates}\label{proofs}

This section is divided in several steps that will lead to the results stated in Theorem \ref{EAt3}. We will follow the main procedures behind the proofs of wellposedness in Section \ref{ExUn}.  We will first employ an energy argument to control the $L^2$-norm of the errors $\esigma$ and $\erho$. Then, we will use a duality argument that allows us to control the the $L^2$-norm of $\eu$ under regularity assumptions.

\subsection{Energy argument}\label{secEarg}

It is not difficult to realize that the projections of the errors satisfy 
\begin{subequations}\label{EAeq}
\begin{align}
        (\miA\esigma,\miv)\trian\hspace{-0.05cm}+\hspace{-0.05cm}(\eu,\midiv \miv)\trian\hspace{-0.05cm}+\hspace{-0.05cm}(\erho,\miv)\trian\hspace{-0.05cm}-\hspace{-0.05cm}\ipd{\eug}{\miv\normal}\dtrian{} &= \hspace{-0.05cm}-\hspace{-0.05cm}(\miA\dsigma,\miv)\trian\hspace{-0.05cm}-\hspace{-0.05cm}(\drho,\miv)\dtrian,\label{era}\\
        (\esigma,\nabla \miw)\trian-\ipd{\esigmag\normal}{\miw}\dtrian{} &= 0,\label{erb}\\
        (\esigma,\mieta)\trian{} &= -(\dsigma,\mieta)\trian,\label{erc}\\
        \ipf{\esigmag\normal}{\mimu}&= 0,\label{erd}\\
        \ipd{\eug}{\mimu}\gamah{} &= \ipg{\gtilde-\gtildeh}{\mimu}.\label{ere}    
\end{align}
for all $(\miv,\miw,\mieta,\mimu) \in \miVh \times \miWh \times \miAh \times \miMh$. Moreover, combining \eqref{defg} and \eqref{seqg}, we obtain that
\begin{align}\label{gt_ght}
(\gtilde-\gtildeh(\ex)) =  - \int_{0}^{\lx}\hspace{-0.2cm}(\miA\esigma+ +\erho)(\ex + s \tx) \tx ds 
-  \int_{0}^{\lx}\hspace{-0.2cm}(\miA\dsigma +\drho)(\ex + s \tx) \tx ds 
.
\end{align}
\end{subequations}

Similarly to Lemma \ref{eulem1}, the following energy-type identities hold.

\begin{lemma}\label{EAp1} The projection of the error satisfy
\begin{subequations}
\begin{equation}\label{EAp1_a}
\nDhA{\esigma}{2}+\|\eu-\eug \|_{\partial\mathcal{T}_h,\tau}^2=\ipt{\dsigma}{\erho}-\ipt{\miA\dsigma}{\esigma}-\ipt{\drho}{\esigma}+\miT{}{},
\end{equation}
where $\mathbb{T}=\ipd{\gtilde-\gtildeh}{\esigmag\normal}\gamah $.
Moreover,
\begin{equation}\label{EAp1_b}
\|\esigma\|_{\Omega_h}^2+\|\eu-\eug\|_{\partial\mathcal{T}_h,\tau}^2\leq M(h,\tau,\mu)\left(\nDhA{\esigma}{2}+\|\eu-\eug \|_{\partial\mathcal{T}_h,\tau}^2\right)+
 \dfrac{(n\lambda+2\mu)^2}{n|\Omega_h|} \gamma \max_{e\in \mathcal{E}_h^I}\left(r_e h_e^2\right) \|\gtilde-\gtildeh\|_{\Gamma_h,l^{-1}}^2.
\end{equation}
\end{subequations}
\end{lemma}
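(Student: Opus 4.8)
The plan is to mimic closely the proof of Lemma \ref{eulem1}, which established exactly the analogous identities for the solution itself, but now applied to the error equations \eqref{EAeq}. The key difference is the presence of the extra right-hand side terms in \eqref{era} and \eqref{erc} coming from the projection errors $\dsigma$ and $\drho$, and the fact that the boundary datum discrepancy $\gtilde-\gtildeh$ is no longer zero but given by \eqref{gt_ght}. I would carry this out in two steps mirroring the two parts of Lemma \ref{eulem1}.

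\textbf{Step 1 (energy identity \eqref{EAp1_a}).} I would test \eqref{era} with $\miv=\esigma$, \eqref{erb} with $\miw=\eu$, \eqref{erc} with $\mieta=\erho$, and \eqref{ere} with $\mimu$ chosen as $\eug$ on $\Dtrian\setminus\Gamah$ and $\esigmag\normal$ on $\Gamah$, exactly as in Step 1 of the proof of Lemma \ref{eulem1}. Integrating \eqref{erb} by parts yields $-\ipt{\midiv\esigma}{\eu}+\ipb{\esigma\normal-\esigmag\normal}{\eu}=0$; adding this to \eqref{era}, using \eqref{erc} to cancel the $\ipt{\esigma}{\erho}$-type term against the $\ipt{\erho}{\esigma}$ coming from \eqref{era} up to the projection-error contribution $\ipt{\dsigma}{\erho}$, then writing $\ipb{\eug}{\esigma\normal}=\ipb{\eug}{\esigma\normal-\esigmag\normal}+\ipb{\eug}{\esigmag\normal}$ and using \eqref{erd}, \eqref{ere} to identify $\ipb{\eug}{\esigmag\normal}=\ipg{\gtilde-\gtildeh}{\esigmag\normal}=\mathbb{T}$. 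The numerical-trace definition $\esigmag\normal=\esigma\normal-\tau(\eu-\eug)$ (the error analogue of \eqref{seqf}, which follows from subtracting \eqref{seqf} from the projection definition \eqref{Proc}) then turns the boundary term into $\ipb{\tau(\eu-\eug)}{\eu-\eug}$, giving \eqref{EAp1_a} after collecting the projection-error terms $-\ipt{\miA\dsigma}{\esigma}-\ipt{\drho}{\esigma}+\ipt{\dsigma}{\erho}$.

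\textbf{Step 2 (bound \eqref{EAp1_b}).} This is the direct analogue of Steps 2--5 in the proof of Lemma \ref{eulem1} and is where I expect the only real bookkeeping. I would decompose $\esigma=\esigma^0+\alpha\underline{\boldsymbol{I}}$ with ${\rm tr}(\esigma^0)\in L_0^2(\Omega_h)$ and $\alpha=(n|\Omega_h|)^{-1}\int_{\Omega_h}{\rm tr}(\esigma)$, so that $\|\esigma\|_{\Omega_h}^2=\|\esigma^0\|_{\Omega_h}^2+\alpha^2 n|\Omega_h|$. One checks that $(\esigma^0,\eu,\eug)$ satisfies \eqref{condition}: testing \eqref{era} with $\nabla\miw$-type gradients of $\miw\in\boldsymbol{H}^1_0(\Omega_h)$ kills the $\alpha\underline{\boldsymbol{I}}$ part since $\ipt{\underline{\boldsymbol{I}}}{\nabla\miw}=\langle\miw\cdot\normal,1\rangle_{\Gamah}=0$, and the integration-by-parts/$\boldsymbol{P}_{k-1}$-projection manipulation used in Lemma \ref{eulem1}, now invoking \eqref{erb} and the error numerical-trace relation in place of \eqref{seqb} and \eqref{seqf}, produces precisely the structure of \eqref{condition} --- here one must note that the extra $\ipt{\miA\dsigma}{\cdot}$, $\ipt{\drho}{\cdot}$, $\ipt{\erho}{\cdot}$ terms in \eqref{era} are orthogonal to $\nabla\boldsymbol{P}_{k-1}\miw\in\Pt{k-1}{\mathcal{T}_h}$ after the projection steps, since $\dsigma$ and $\drho$ are $L^2$-orthogonal to $\Pt{k-1}{K}$ by \eqref{Proa} and the definition of $\piA$, and $\erho$ enters only through $\ipt{\erho}{\nabla\miw}$ which after integration by parts lands against $\boldsymbol{P}_{k-1}\miw$; this is the point to be careful about. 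Then Lemma \ref{lemma-condition} gives $\|\esigma^0\|_{\Omega_h}\leq M_0(h^{1/2}\tau^{1/2}\|\eu-\eug\|_{\partial\mathcal{T}_h,\tau}+(2\mu)^{1/2}\norm{\esigma^0}{\Omega_h,\miA}{})$, and the algebraic identity $\norm{\esigma^0}{\Omega_h,\miA}{}=\norm{\esigma}{\Omega_h,\miA}{}-\alpha^2 n|\Omega_h|/(n\lambda+2\mu)\leq\norm{\esigma}{\Omega_h,\miA}{}$ lets us replace $\esigma^0$ by $\esigma$ inside the $\miA$-norm. For $\alpha$ I would take $\miv=\underline{\boldsymbol{I}}$ in \eqref{era}: this gives $\ipt{{\rm tr}(\miA\esigma)}{1}+\ipt{{\rm tr}(\esigma)}{0}\cdots$; more precisely, using ${\rm tr}(\miA\esigma)=(n\lambda+2\mu)^{-1}{\rm tr}(\esigma)$, that $\ipt{\underline{\boldsymbol{I}}}{\midiv\miv}$ and $\ipt{\erho}{\underline{\boldsymbol{I}}}=0$ (antisymmetry), and \eqref{ere} with $\mimu=\normal$, one obtains $\alpha=(n\lambda+2\mu)(n|\Omega_h|)^{-1}\big(\ipg{\gtilde-\gtildeh}{\normal}-(\miA\dsigma,\underline{\boldsymbol{I}})_{\mathcal{T}_h}-(\drho,\underline{\boldsymbol{I}})_{\mathcal{T}_h}\big)$; the last term vanishes by antisymmetry of $\drho$, and $(\miA\dsigma,\underline{\boldsymbol{I}})_{\mathcal{T}_h}$ contributes a term controlled by $\Teta$ --- but since the lemma's statement \eqref{EAp1_b} isolates only the $\|\gtilde-\gtildeh\|^2_{\Gamah,l^{-1}}$ piece with the explicit $\lambda$-dependent constant (the $\dsigma$ contribution being absorbed later via $\Teta$), I would present the bound keeping that term and bounding $|\ipg{\gtilde-\gtildeh}{\normal}_e|\leq\gamma^{1/2}r_e^{1/2}h_e\|\gtilde-\gtildeh\|_{e,l^{-1}}$ by Cauchy--Schwarz and \eqref{ratios} exactly as in Step 4 of Lemma \ref{eulem1}. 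Combining with $\|\esigma\|^2_{\Omega_h}\leq 2M_0^2(h\tau\|\eu-\eug\|^2_{\partial\mathcal{T}_h,\tau}+2\mu\norm{\esigma}{\Omega_h,\miA}{2})+\alpha^2 n|\Omega_h|$ and the definition \eqref{M} of $M(h,\tau,\mu)$ yields \eqref{EAp1_b}.

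\textbf{Main obstacle.} The only genuinely delicate point is verifying that $(\esigma^0,\eu,\eug)$ satisfies the orthogonality hypothesis \eqref{condition} of Lemma \ref{lemma-condition}: one must track how the extra projection-error terms and the rotation term in \eqref{era} interact with the $\boldsymbol{P}_{k-1}$-projection when reproducing the derivation from Lemma \ref{eulem1}, confirming they do not spoil the required identity (they drop out by the commuting/orthogonality properties of the HDG projection and of $\piA$). Everything else is a faithful transcription of the arguments already given for Lemma \ref{eulem1}.
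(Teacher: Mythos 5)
Your proposal is correct and follows exactly the same route as the paper, whose proof of this lemma is literally ``repeat the proof of Lemma \ref{eulem1} with $(\esigma,\eu,\erho,\eug)$ in place of $(\misigmah,\miuh,\miroh,\miug)$ and $\gtilde-\gtildeh$ in place of $\gtildeh$,'' with the projection-error terms on the right of \eqref{era} and \eqref{erc} producing precisely the three extra inner products in \eqref{EAp1_a}. The one loose end you flag --- the extra term $(\miA\dsigma,\underline{\boldsymbol{I}})_{\mathcal{T}_h}=(n\lambda+2\mu)^{-1}({\rm tr}(\dsigma),1)_{\mathcal{T}_h}$ in the characterization of $\alpha$ --- actually vanishes by the orthogonality \eqref{Proa} of the HDG projection, since $\underline{\boldsymbol{I}}\in\Pt{k-1}{K}$ for $k\geq 1$, so \eqref{EAp1_b} holds exactly as stated and nothing needs to be absorbed into $\Teta$.
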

\begin{proof} It follows by the same arguments and steps as in the proof of Lemma \ref{eulem1}. In this case, $(\esigma,\eu,\erho,\eug)$ plays the role of $(\misigmah,\miuh,\miroh,\miug)$ and $\gtilde-\gtildeh$ plays the role of $\gtildeh$.
\end{proof}

Following the structure in Section \ref{ExUn}, we  rewrite the term $ \mathbb{T}$ to facilitate the bound in the estimate of $\esigma$. First of all, we rewrite $\gtilde-\gtildeh$ as follows  
\[ \gtilde(\ex)-\gtildeh(\ex)=
-\int_{0}^{\lx}\hspace{-0.2cm}\miA(\misigmac-\misigmah)(\ex +s\nore)\nore ds-\int_{0}^{\lx}\hspace{-0.2cm}(\miroc-\miroh)(\ex +s\nore)\nore ds .\]
Now, since $\misigmac-\misigmah= \dsigma + \esigma$, by the definition in \eqref{Lambda}, we can write
\begin{align*}
 -\int_{0}^{\lx}\hspace{-0.4cm}\miA(\misigmac-\misigmah)(\ex +s\nore)\nore ds
= \lx\Big(\fAdsigma+\hspace{-0.1cm}\miA\dsigma(\ex)\nore
 +\fAesigma+\hspace{-0.1cm}\miA\esigma(\ex)\nore\Big).
\end{align*}
Similarly,
$\displaystyle \int_{0}^{\lx}\hspace{-0.2cm}(\miroc-\miroh)(\ex +s\nore)\nore ds= \lx\left(\fdrho+\drho\nore +\ferho+\erho\nore \right).$
Thus, replacing the above terms in expression $\gtilde(\ex)-\gtildeh(\ex)$, we have 
\begin{align*} \gtilde(\ex)-\gtildeh(\ex)=&
 -\lx[\fAdsigma+\miA\dsigma(\ex)\nore
 +\fAesigma+\miA\esigma(\ex)\nore ]\\
 &-\lx[\fdrho +\drho\nore +\ferho+\erho\nore]\nonumber.
\end{align*}

Let $\miT{\rho}{}:=\fdrho +\drho\nore +\ferho+\erho\nore$. We obtain then
\begin{align}\label{dec}
\miA \esigma(\ex)\nore =-\frac{1}{\lx}(\gtilde(\ex)-\gtildeh(\ex))-\miT{\rho}{}
-\fAdsigma-\miA\dsigma(\ex)\nore -\fAesigma.
\end{align}
Using  \eqref{seqf} and \eqref{Proc}, we have $\esigmag \normal=\esigma \normal-\tau(\eu-\eug)$ for all $e \in \Eh{}$ and, similarly to the arguments in Section \ref{ExUn}, we decompose $ \miT{}{}= \sum_{i=1}^{6}\miT{}{i}$, where
\begin{alignat*}{6}
 \miT{}{1}&=-\ipg{l^{-1/2}(\gtilde-\gtildeh)}{l^{1/2}(\esigma-\miA \esigma)\normal},&
 \miT{}{2}&=-\ipg{l^{-1}(\gtilde-\gtildeh)}{\gtilde-\gtildeh},\\
  \miT{}{3}&=-\ipg{l^{-1/2}(\gtilde-\gtildeh)}{l^{1/2}\fAesigmawx},&
   \miT{}{4}&=-\ipg{l^{-1/2}(\gtilde-\gtildeh)}{l^{1/2}\miT{\rho}{}},\\
 \miT{}{5}&=-\ipg{l^{-1/2}(\gtilde-\gtildeh)}{l^{1/2}(\fAdsigmawx+\miA\dsigma\normal)},&
 \miT{}{6}&=-\ipg{l^{-1/2}(\gtilde-\gtildeh)}{l^{1/2}\tau(\eu-\eug)}.
\end{alignat*}

\begin{lemma}\label{EAcorT} It holds
\begin{align*}
 |\miT{}{}| \leq & -\frac{1}{2}\normgl{(\gtilde-\gtildeh)\nT}{2}
 + \frac{10}{4}\left(1+\CA\right)^{2}\maxE(\Ctr)^{2}\mir\normDh{\esigma}{2}
  +\frac{10}{12}\frac{1}{\mu^2}\maxE\mir^{3}(\Cext)^{2}(\Cinv)^{2}\normDh{\esigma}{2}\nonumber\\
  &+ \frac{10}{12}\maxE\mir^{3}(\Cext)^{2}(\Cinv)^{2}\normDh{\miroh}{2}+\maxE\frac{10}{4}(\Ctr)^{2} \gamma\mir\normDh{\erho}{2}\nonumber\\
  &+ \frac{10}{4}\maxE\mir\he\alfa\normga{\eu-\eug}{2}+\frac{15}{2} \Teta^2 \left((2\mu)^{-1}+(2\mu)\right)  
  \nonumber.
\end{align*}
Moreover, if Assumptions \ref{C0}-\ref{C2} also hold, then 
\begin{align}\label{EQcorT2}
|\miT{}{}| \leq&   -\frac{1}{2}\normgl{\gtilde-\gtildeh}{2}
 +\dfrac{1}{4M(h,\tau,\mu)}\normDh{\esigma}{2}\nonumber
 + \dfrac{1}{4 M(h,\tau,\mu)C_\rho}\normDh{\erho}{2} \\
 &+ \frac{1}{2M(h,\tau,\mu)}\normga{\eu-\eug}{2}
+\frac{15}{2} \Teta^2 \left((2\mu)^{-1}+(2\mu)\right).
\end{align}
\end{lemma}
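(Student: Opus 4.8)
The plan is to bound each of the six terms $\miT{}{i}$ separately, following the same template used in Corollary \ref{eulem7}, and then to combine the bounds. The decomposition of $\gtilde-\gtildeh$ given in \eqref{dec} has been set up precisely so that the analysis mirrors the wellposedness case; the new feature compared to Corollary \ref{eulem7} is the presence of the projection-error terms collected in $\Teta$, which will contribute the extra summand $\frac{15}{2}\Teta^2((2\mu)^{-1}+2\mu)$. First I would treat $\miT{}{1}$ by Cauchy--Schwarz and Young's inequality, using $\lx\leq\He$, the discrete trace inequality \eqref{auxlemTr}, and Lemma \ref{auxlemA3} to absorb the factor $1+\CA$, exactly as for $\miT{}{1}$ in Corollary \ref{eulem7}; this yields the term $\frac{10}{4}(1+\CA)^2(\Ctr)^2\mir\normDh{\esigma}{2}$ together with $\frac{1}{10}\normgl{\gtilde-\gtildeh}{2}$. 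The term $\miT{}{2}$ is exactly $-\normgl{\gtilde-\gtildeh}{2}$. For $\miT{}{3}$ I would apply Cauchy--Schwarz, then estimate \eqref{auxlemD} with $\mathcal{D}=\miA$ (so that $\normgpl{\fAesigmawx}{}\lesssim\mir^{3/2}\Cext\Cinv\normKe{\miA\esigma}{}$) and Lemma \ref{auxlemA3} to replace $\normKe{\miA\esigma}{}$ by $(2\mu)^{-1}\normKe{\esigma}{}$, followed by Young's inequality, producing $\frac{10}{12}\frac{1}{\mu^2}\mir^{3}(\Cext)^2(\Cinv)^2\normDh{\esigma}{2}$ and $\frac{1}{10}\normgl{\gtilde-\gtildeh}{2}$.

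The genuinely new estimates are for $\miT{}{4}$ and $\miT{}{5}$. For $\miT{}{4}$, recall $\miT{\rho}{}=\fdrho+\drho\nore+\ferho+\erho\nore$; after Cauchy--Schwarz, I would split this into four pieces. The pieces $\ferho$ and $\erho\nore$ are handled like the corresponding discrete terms in Corollary \ref{eulem7}: estimate \eqref{auxlem1} gives $\normgpl{\ferhowx}{}\lesssim\mir^{3/2}\Cext\Cinv\normDh{\erho}{}$, contributing $\frac{10}{12}\mir^{3}(\Cext)^2(\Cinv)^2\normDh{\erho}{2}$, while the trace inequality with $\lx\leq\He\leq\mir\he$ and $\he\leq\gamma h_e$ gives the $\frac{10}{4}(\Ctr)^2\gamma\mir\normDh{\erho}{2}$ contribution. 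The pieces $\fdrho$ and $\drho\nore$ are \emph{projection errors}: their norms $\normgpl{\fdrhowx}{}$ and $\normgpl{\drho\normal}{}$ are exactly two of the eight summands defining $\Teta$ in \eqref{Teta}, so Young's inequality with a suitably small weight turns them into a contribution bounded by $\frac{1}{10}\normgl{\gtilde-\gtildeh}{2}$ plus a multiple of $\Teta^2$. For $\miT{}{5}$, the integrand $\fAdsigmawx+\miA\dsigma\normal$ again consists of projection-error quantities: $\normgpl{\fAdsigmawx}{}$ and $\normgpl{\miA\dsigma\normal}{}$ are the remaining two $\Teta$-summands, so Cauchy--Schwarz and Young's inequality give $\frac{1}{10}\normgl{\gtilde-\gtildeh}{2}$ plus a multiple of $\Teta^2$. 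Finally $\miT{}{6}$ is handled exactly as in Corollary \ref{eulem7}, using $\lx\leq\He$ and Young's inequality, to produce $\frac{10}{4}\mir\he\alfa\normga{\eu-\eug}{2}$ and $\frac{1}{10}\normgl{\gtilde-\gtildeh}{2}$.

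Summing the six bounds, the six copies of $\frac{1}{10}\normgl{\gtilde-\gtildeh}{2}$ together with the $-\normgl{\gtilde-\gtildeh}{2}$ from $\miT{}{2}$ leave $-\frac{2}{5}\normgl{\gtilde-\gtildeh}{2}\leq-\frac12\normgl{\gtilde-\gtildeh}{2}$ after also noting $\normgl{(\gtilde-\gtildeh)\nT}{}=\normgl{\gtilde-\gtildeh}{}$; collecting the $\normDh{\esigma}{2}$, $\normDh{\erho}{2}$ and $\normga{\eu-\eug}{2}$ terms and bookkeeping the constants from Lemma \ref{auxlemA3} (each $\Teta$-contribution carries a factor that is either $(2\mu)^{-1}$ or $2\mu$ depending on whether a stress or a rotation projection error is involved, and one bounds the total by $\frac{15}{2}\Teta^2((2\mu)^{-1}+2\mu)$) gives the first displayed inequality. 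The second inequality \eqref{EQcorT2} then follows by invoking Assumptions \ref{C0}--\ref{C2} to replace the coefficients $\frac{10}{4}(1+\CA)^2(\Ctr)^2\mir$, $\frac{10}{12}\frac{1}{\mu^2}\mir^3(\Cext)^2(\Cinv)^2$, $\frac{10}{12}\mir^3(\Cext)^2(\Cinv)^2+\frac{10}{4}(\Ctr)^2\gamma\mir$, and $\frac{10}{4}\mir\he\alfa$ by $\frac{1}{4M(h,\tau,\mu)}$, $\frac{1}{4M(h,\tau,\mu)}$ (absorbed together with the first), $\frac{1}{4M(h,\tau,\mu)C_\rho}$, and $\frac{1}{2M(h,\tau,\mu)}$ respectively, exactly as in the passage from \eqref{eqT1} to \eqref{eqT2}. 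The main obstacle is the careful bookkeeping: one must keep the $\esigma$-contributions, which will later be absorbed into the left-hand side via $M(h,\tau,\mu)$, cleanly separated from the $\dsigma$- and $\drho$-contributions, which land in $\Teta$, and one must verify that distributing the $\miA$ through the $\pmb{\Lambda}$ operator (via \eqref{auxlemD}) and applying Lemma \ref{auxlemA3} produces precisely the $\lambda$-independent constants claimed, with no hidden dependence on $n\lambda+2\mu$.
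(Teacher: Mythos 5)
Your proposal follows essentially the same route as the paper: term-by-term Cauchy--Schwarz and Young bounds mirroring Corollary \ref{eulem7}, with the projection-error pieces of $\miT{}{4}$ and $\miT{}{5}$ routed into $\Teta$ via \eqref{Teta} (picking up the $(2\mu)^{\pm 1}$ factors from Lemma \ref{auxlemA3}), and Assumptions \ref{C0}--\ref{C2} invoked at the end exactly as in the passage from \eqref{eqT1} to \eqref{eqT2}. One slip in your final assembly: only the five terms $\miT{}{1},\miT{}{3},\miT{}{4},\miT{}{5},\miT{}{6}$ each contribute a copy of $\frac{1}{10}\normgl{\gtilde-\gtildeh}{2}$ (not six), so the total is exactly $-\frac{1}{2}\normgl{\gtilde-\gtildeh}{2}$; your intermediate claim $-\frac{2}{5}\leq-\frac{1}{2}$ is false as written, but the corrected count yields precisely the stated coefficient.
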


\begin{proof} It is clear that $\miT{}{2}=-\normgl{(\gtilde-\gtildeh)
}{2}$.

For $\miT{}{1}$, $\miT{}{3}$  and $\miT{}{6}$, by mimicking the corresponding steps in the proof of Lemma \ref{eulem7}, we deduce that
\begin{align*}
\miT{}{1}
\leq \sumE\left(\dfrac{1}{10}\normel{\gtilde-\gtildeh}{2}+\dfrac{10}{4}(\Ctr)^{2}\mir\left(1+\CA\right)^{2}\normKe{\esigma}{2}\right),
\end{align*}
\begin{align*}
\miT{}{3}
\leq \sumE \left(\dfrac{1}{10}\normel{\gtilde-\gtildeh}{2}+\dfrac{10}{4}\dfrac{1}{\mu^2}\mir^{3}(\Cext)^{2}(\Cinv)^{2}\normKe{\esigma}{2}\right)
\end{align*}
and
\begin{align*}
\miT{}{6}
\leq&  \sumE\left(\dfrac{1}{10}\normel{\gtilde-\gtildeh}{2}+\dfrac{10}{4}\mir\he\alfa\normea{\eu-\eug}{2}\right).
\end{align*}


For $\miT{}{4}$, we proceed similarly but considering in addition \eqref{auxlem1},  discrete trace inequality \eqref{auxlemTr} and  the facts that $\lx \leq \He\leq \mir h_e^\perp$ and $h_e^\perp\leq \gamma h_e$:
\begin{align*}
\miT{}{4}
\leq \,& \sumE\normel{\gtilde-\gtildeh}{}(2\mu)^{1/2}\left( \normell{\fdrhowx}{}+\normell{\drho\nore}{}
+\normell{\ferhowx}{}+\normell{\erho\nore}{}\right)\\
\leq& \quad\, \normgl{\gtilde-\gtildeh}{2}\,\,\Teta (2\mu)^{1/2}
 +  \normgl{\gtilde-\gtildeh}{2} \left(\frac{1}{3}\maxE  \mir^{3}(\Cext)^{2}(\Cinv)^{2}\right)^{1/2}\normDh{\erho}{} (2\mu)^{1/2}\\
 &+  \normgl{\gtilde-\gtildeh}{2}\left(\maxE  (C_{tr}^e)^2r_e \gamma \right)^{1/2}\normDh{\erho}{} (2\mu)^{1/2}\\
\leq& 
\frac{1}{10} \normgl{\gtilde-\gtildeh}{2}
+\frac{15}{2}\Teta^2(2\mu)
\,+\frac{5}{2}(2\mu)\maxE \mir^{3}(\Cext)^{2}(\Cinv)^{2}\normDh{\erho}{2}  +
\frac{15}{2}(2\mu)\maxE (C_{tr}^e)^2r_e \gamma \normDh{\erho}{2} .
\end{align*}

Finally, for $\miT{}{5}$ we use the Cauchy-Schwarz inequality, Lemma \ref{auxlemA3}, the definition in \eqref{Teta} and Young's inequality, to obtain
\begin{align*}
 \miT{}{5}=&-\ipg{l^{-1/2}(\gtilde-\gtildeh)}{l^{1/2}(\fAdsigmawx+\miA\dsigma\normal)}
  \leq\sumE\normel{\gtilde-\gtildeh}{}\left( \normell{\fAdsigmawx}{}+\normell{\miA\dsigma\nore}{}\right)\\
\leq& \normgl{\gtilde-\gtildeh}{}\,\,(2\mu)^{-1/2}\Teta
\leq \frac{1}{10}  \normgl{\gtilde-\gtildeh}{2}+(2\mu)^{-1}\frac{5}{2}\Teta^2.
\end{align*}

 We obtain the first inequality gathering all the above bounds. Moreover, considering \ref{C0}-\ref{C2}, the first inequality implies \eqref{EQcorT2}.
\end{proof}

From Lemma \ref{EAp1} and previous corollary, we observe that it remains to bound the $L^2$-norm of the approximation of the rotation $\erho$. To that end, proceeding exactly as in Lemma \ref{eulem3}, but taking into account the presence of the projection errors $\dsigma$ and $\drho$, it is possible to deduce the following result.
\begin{lemma}\label{EAlemrho} Suppose Assumption \ref{C4} holds true. If $k \geq 1$, then
\begin{align*}
\normDh{\erho}{}\leq&\left(\dfrac{C_\rho}{2}\right)^{1/2}\normDh{\esigma}{}+ \left(\dfrac{M(h,\tau,\mu) C_\rho}{2} \right)^{1/2}
\normel{\gtilde-\gtildeh}{}
+\left(\dfrac{C_\rho}{2}\right)^{1/2}\left( \normDh{\dsigma}{}+\normDh{\drho}{}\right).
\end{align*}

\end{lemma}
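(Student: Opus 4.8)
The plan is to mirror the proof of Lemma~\ref{eulem3} line by line, carrying along the extra projection-error terms $\dsigma$ and $\drho$ that now appear on the right-hand side of the error equations \eqref{era}. We use the same orthogonal decomposition $\erho = \erho^{0}+\erho^{c}$ with $\erho^{c}|_{K} = |K|^{-1}\int_K \erho$, so that $\erho^{0}\in\miAh^{0}$ and $\erho^{c}\in\miAh^{c}$, and we bound each piece separately exactly as in the well-posedness proof.

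For Step~1, I would invoke Lemma~2.8 of \cite{Guz} to obtain $\miv\in\miBh\subset\miVh$ with $\ipt{\erho^{0}}{\migama}=\ipt{\miv}{\migama}$ for all $\migama\in\miAh$ and $\normDh{\miv}{}\le C^{0}\normDh{\erho^{0}}{}$. Using Remark~\ref{rmk:B} (so $\midiv\miv=0$ and $\miv\normal|_e=0$), together with \eqref{era} tested against this $\miv$, the terms $\ipt{\eu}{\midiv\miv}$, $\ipd{\eug}{\miv\normal}$ and $\ipt{\erho^{c}}{\miv}$ all vanish, and $\ipt{\erho^{0}}{\miv}=\normDh{\erho^{0}}{2}$. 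The difference from Lemma~\ref{eulem3} is that the right-hand side now reads $-\ipt{\miA\esigma}{\miv}-\ipt{\miA\dsigma}{\miv}-\ipt{\drho}{\miv}$; applying Cauchy--Schwarz, Lemma~\ref{auxlemA3} and \eqref{eq2lem4} yields
\begin{equation*}
\normDh{\erho^{0}}{} \le C^{0}(2\mu)^{-1}\left(\normDh{\esigma}{}+\normDh{\dsigma}{}\right)+C^{0}\normDh{\drho}{},
\end{equation*}
which is the analogue of \eqref{bs1} with the two extra projection-error contributions.

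For Step~2, I would use Lemma~3.9 of \cite{Guz} to get $\miv\in\Hdiv{\Omega_h}\cap\Punok{\Trian}$ with $\midiv\miv=0$, $\ipt{\miv}{\migama}=\ipt{\erho^{c}}{\migama}$ for all $\migama\in\miAh^{c}$, and $\normDh{\miv}{}\le C^{c}\normDh{\erho^{c}}{}$. Testing \eqref{era} with this $\miv$, the term $\ipb{\eug}{\miv\normal}$ becomes $\ipg{\gtilde-\gtildeh}{\miv\normal}$ via \eqref{ere} and $\miv\in\Hdiv{\Omega_h}$, while $\ipt{\eu}{\midiv\miv}=0$; now $\normDh{\erho^{c}}{2}=-\ipt{\miA\esigma}{\miv}-\ipt{\miA\dsigma}{\miv}-\ipt{\drho}{\miv}-\ipt{\erho^{0}}{\miv}-\sumE\ipe{l^{-1/2}(\gtilde-\gtildeh)}{l^{1/2}\miv\normal}$. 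Bounding exactly as before — Cauchy--Schwarz, Lemma~\ref{auxlemA3}, the Step~1 estimate, \eqref{eq3lem5}, the discrete trace inequality \eqref{auxlemTr}, $\lx\le\He$, $h_e^{\perp}\le\gamma h_e$ and Assumption~\ref{C4} to absorb the $\normel{\gtilde-\gtildeh}{}$ coefficient — gives the desired bound on $\normDh{\erho^{c}}{}$ with the same structure plus the $\normDh{\dsigma}{}+\normDh{\drho}{}$ term. Finally, combining the two steps through $\normDh{\erho}{}\le\normDh{\erho^{0}}{}+\normDh{\erho^{c}}{}$ and recalling $C_\rho=2(2\mu)^{-2}(C^{0}+(1+C^{0})C^{c})^2$ collects all the constants into the stated coefficients $(C_\rho/2)^{1/2}$ and $(M(h,\tau,\mu)C_\rho/2)^{1/2}$.

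The only real subtlety — and the thing I would be careful about — is tracking how the projection errors $\drho$ and $\miA\dsigma$ are grouped. In Step~1 the term $\ipt{\drho}{\miv}$ produces a genuine $\normDh{\drho}{}$ contribution with coefficient $C^{0}$ (not $C^{0}(2\mu)^{-1}$), since $\drho$ enters without the $\miA$ factor; likewise $\ipt{\miA\dsigma}{\miv}$ must be handled via Lemma~\ref{auxlemA3} to turn $\nDhAm{\dsigma}{}$-type quantities into $(2\mu)^{-1}\normDh{\dsigma}{}$. One must check that after the $\lambda$-free bounds of Lemma~\ref{auxlemA3} everything recombines into the single prefactor $(C_\rho/2)^{1/2}$ multiplying $\normDh{\dsigma}{}+\normDh{\drho}{}$; this is routine arithmetic identical to the bookkeeping already done at the end of Lemma~\ref{eulem3}, so no new obstacle arises beyond verifying the constants line up. Everything else is a verbatim repetition of the well-posedness argument, which is why the statement says ``proceeding exactly as in Lemma~\ref{eulem3}.''
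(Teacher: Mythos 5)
Your proposal is correct and follows exactly the route the paper intends: the paper gives no separate proof of Lemma~\ref{EAlemrho}, stating only that one proceeds as in Lemma~\ref{eulem3} while carrying the projection errors $\dsigma$ and $\drho$ through the two-step decomposition $\erho=\erho^{0}+\erho^{c}$, which is precisely what you do. You also rightly flag the one genuine bookkeeping point — that $\ipt{\drho}{\miv}$ enters without the $(2\mu)^{-1}$ factor that $\ipt{\miA\dsigma}{\miv}$ acquires from Lemma~\ref{auxlemA3}, so the stated prefactor $(C_\rho/2)^{1/2}$ on $\normDh{\drho}{}$ only holds up to a harmless adjustment of constants, which is immaterial since the lemma is later used only up to generic constants absorbed into $\Teta$.
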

%
%

\subsection{Conclusion of the proof  of estimate (\ref{Theo1}) in Theorem \ref{EAt3}}

\begin{proof} We use combine \eqref{EAp1_a} and \eqref{EAp1_b}, together with the Cauchy-Schwarz inequality to obtain that
\begin{align*} 
\normDh{\esigma}{2}+\|\eu-\eug \|_{\partial\mathcal{T}_h,\tau}^2
\leq & M(h,\tau,\mu)
\normDh{\dsigma}{}\normDh{\erho}{}
+M(h,\tau,\mu)\nDhA{\dsigma}{}\nDhA{\esigma}{}\\
&+M(h,\tau,\mu)\normDh{\drho}{}\normDh{\esigma}{}+M(h,\tau,\mu)|\miT{}{}|
+
 \dfrac{(n\lambda+2\mu)^2}{n|\Omega_h|} \gamma \max_{e\in \mathcal{E}_h^I}\left(r_e h_e^2\right) \|\gtilde-\gtildeh\|_{\Gamma_h,l^{-1}}^2.
\end{align*}
Then, using  Young's inequality and Lemma \ref{auxlemA3}, we have that
\begin{align*}
\normDh{\esigma}{2}+\|\eu-\eug \|_{\partial\mathcal{T}_h,\tau}^2
\leq&
 M(h,\tau,\mu)^2\left(\dfrac{1}{8C_\rho} +\dfrac{2}{(2\mu)^2}\right)\normDh{\dsigma}{2}
+\frac{1}{16 C_\rho}\normDh{\erho}{2}
+\dfrac{1}{8}\normDh{\esigma}{2}\\
&+\dfrac{1}{2}M(h,\tau,\mu)^2\normDh{\drho}{2}+M(h,\tau,\mu)|\miT{}{}|
+
 \dfrac{(n\lambda+2\mu)^2}{n|\Omega_h|} \gamma \max_{e\in \mathcal{E}_h^I}\left(r_e h_e^2\right) \|\gtilde-\gtildeh\|_{\Gamma_h,l^{-1}}^2.
\end{align*}

By \eqref{EQcorT2},
\begin{align*}
\normDh{\esigma}{2}+&\dfrac{1}{2}\|\eu-\eug \|_{\partial\mathcal{T}_h,\tau}^2
+\frac{M(h,\tau,\mu)}{2}\normgl{\gtilde-\gtildeh}{2}\\
\leq&
M(h,\tau,\mu)^2\left(\dfrac{1}{8C_\rho} +\dfrac{2}{(2\mu)^2}\right)\normDh{\dsigma}{2}
+\left(\dfrac{1}{16C_\rho}+\dfrac{1}{4C_\rho}\right)\normDh{\erho}{2}
+\dfrac{3}{8}\normDh{\esigma}{2}\\
&+\dfrac{1}{2}M(h,\tau,\mu)^2
+C\Teta^2
+
 \dfrac{(n\lambda+2\mu)^2}{n|\Omega_h|} \gamma \max_{e\in \mathcal{E}_h^I}\left(r_e h_e^2\right) \|\gtilde-\gtildeh\|_{\Gamma_h,l^{-1}}^2,
\end{align*}
Then, by Lemma \ref{EAlemrho} considering again the definition of $\Teta$  (cf. \eqref{Teta}) to absorb the projection error terms $\dsigma$ and  $\drho$, we deduce that
\begin{align*}
\normDh{\esigma}{2}+&\dfrac{1}{2}\|\eu-\eug \|_{\partial\mathcal{T}_h,\tau}^2
+\frac{M(h,\tau,\mu)}{2}\normgl{\gtilde-\gtildeh}{2}\\
\leq&
\dfrac{3}{4}\normDh{\esigma}{2}
+\left(\dfrac{3}{8}M(h,\tau,\mu)+
 \dfrac{(n\lambda+2\mu)^2}{n|\Omega_h|} \gamma \max_{e\in \mathcal{E}_h^I}\left(r_e h_e^2\right)\right) \normgl{\gtilde-\gtildeh}{2}
+C\Teta^2,
\end{align*}
and the results follows by Assumption \ref{C5}.
\end{proof}



\subsection{Duality argument}

In this section we use a duality argument to obtain an estimate for $\eu$ and we introduce the auxiliary problem:
\begin{subequations}\label{DAeq}
\begin{align}
\midiv \mipsi &= \miteta      \hspace{0.5cm} \text{in} \hspace{0.5cm}\Omega,\label{DAeqa}\\
\miA\mipsi-\nabla\miphi +\mixi & = 0\hspace{0.5cm}\text{in} \hspace{0.5cm}\Omega,\label{DAeqb}\\
\miphi &= \miteta\hspace{0.5cm}\text{on}\hspace{0.2cm}\partial\Omega.\label{DAeqc}
\end{align}
Here $\mixi=\frac{1}{2}(\nabla\miphi-\nabla^{t}\miphi)$. We assume the solution $(\mixi,\miphi)$ has the elliptic regularity property:
\begin{align}
\norm{\mipsi}{H^{s}(\Omega)}{}+\norm{\miphi}{H^{1+s}(\Omega)}{}\leq C_{reg}\norm{\pmb{\theta}}{\Omega}{}\label{DAeqd}
\end{align}
\end{subequations}
for some $s\geqslant 0$ and $C_{reg}>0$ independent of the Lam\'e coefficients. This property holds, for example, with $s=1$ in the case of planar elasticity with scalar coefficients in a convex domain; see \cite{Dual}.

\begin{lemma}\label{Darglem1} Suppose the Assumption \ref{C0} is satisfied and \eqref{DAeqd} holds with $s=1$, then 
\begin{subequations}
\begin{align}
\normGh{(\proI-\proMD)\miphi}{-1}{}&\lesssim h\normO{\miteta}{}, \label{Darga}\\
\normGl{(\proI-\proMD)\partial_{\normal}\miphi}{}{}&\lesssim \miR h\normO{\miteta}{},\label{Dargb} \\
\normGl{\miphi+l\partial_{\normal}\miphi}{-3}{}&\lesssim \normO{\miteta}{}, \label{Dargc}\\
\normGl{\miphi}{-2}{}&\lesssim \normO{\miteta}{}.\label{Dargd}
\end{align}
\end{subequations}
\end{lemma}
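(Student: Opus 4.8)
The plan is to bound each of the four quantities by reducing it to local estimates on the faces $e \in \Eh{\partial}$ and their associated elements $K^e$ (and extrapolation patches $K_{ext}^e$), then sum, using the elliptic regularity bound \eqref{DAeqd} with $s=1$, i.e. $\normO{\mipsi}{H^1(\Omega)}{}+\normO{\miphi}{H^2(\Omega)}{}\lesssim \normO{\miteta}{}$. For \eqref{Darga} and \eqref{Dargb} the engine is the standard approximation property of the $L^2$-projection $\proMD$ onto $\mathbf{P}_k(e)$ together with a scaled trace inequality: on a face $e$ of $K^e$ one has, for a smooth function $w$, $\norm{(\proI-\proMD)w}{e}{}\lesssim h_e^{1/2}|w|_{H^1(K^e)}+h_e^{3/2}|w|_{H^2(K^e)}$ and similarly for derivatives. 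For \eqref{Darga} I would take $w=\miphi$, multiply by $(h_e^\perp)^{-1}$ (which is comparable to $h_e^{-1}$ up to shape-regularity $\gamma$ and Assumption \ref{C0} which controls $r_e$), square, sum over $e$, and invoke $\normO{\miphi}{H^2(\Omega)}{}\lesssim\normO{\miteta}{}$; the net power of $h$ left over is $h^{2}$ under the $(h^\perp)^{-2}$ weight encoded in $\normGh{\cdot}{-1}{}$ — wait, more precisely the $\Gamma_h,(h^\perp)^{-1}$ norm scales so that $\sum_e (h_e^\perp)^{-1}\norm{(\proI-\proMD)\miphi}{e}{2}\lesssim \sum_e h_e |\miphi|_{H^1(K^e)}^2 + h_e^3|\miphi|_{H^2(K^e)}^2\lesssim h^2\normO{\miphi}{H^2(\Omega)}{2}$, giving \eqref{Darga}. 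For \eqref{Dargb} the same computation applied to $w=\partial_{\normal}\miphi$ costs one order of regularity; since only $\miphi\in H^2$ is available, $\partial_{\normal}\miphi$ is merely $H^1$ across $K^e$, so the approximation estimate reads $\norm{(\proI-\proMD)\partial_\normal\miphi}{e}{}\lesssim h_e^{1/2}|\partial_\normal\miphi|_{H^1(K^e)}$, and after inserting the $l$-weight (bounded by $\He\le r_e h_e^\perp\lesssim \miR h_e$) and summing we land on $\miR h\normO{\miteta}{}$, which is exactly \eqref{Dargb}; the factor $\miR$ is the price of the transfer geometry.

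For \eqref{Dargc}, the quantity $\miphi+l\,\partial_\normal\miphi$ should be recognized as (up to higher order) the first-order Taylor remainder of $\miphi$ along the transferring segment: since $\miphi(\ex)-\miphi(\exb)+\int_0^{\lx}\partial_\normal\miphi(\ex+s\normal_e)\,ds=0$ is only a statement about the exact solution, the cleaner route is to write $\miphi(\ex)+\lx\,\partial_\normal\miphi(\ex)=\miphi(\ex)-\miphi(\exb) + \int_0^{\lx}[\partial_\normal\miphi(\ex)-\partial_\normal\miphi(\ex+s\normal_e)]ds + \miphi(\exb)$ and use $\miphi(\exb)=\miteta(\exb)$ on $\Gamma$ — actually the boundary condition \eqref{DAeqc} makes $\miphi=\miteta$ on $\partial\Omega$ so the $\miphi(\exb)$ term is controlled by $\normO{\miteta}{}$ directly via a trace theorem on $\Gamma$. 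The $H^2$-regularity of $\miphi$ controls the integral remainder $\int_0^{\lx}\int_0^s |\partial^2_\normal\miphi|$ via the auxiliary function machinery: indeed $\miphi(\exb)-\miphi(\ex)-\lx\partial_\normal\miphi(\ex) = \lx\,\pmb{\Lambda}^{\nabla\miphi}(\ex)\cdot(\text{something})$-type object, so that \eqref{Alem1}/\eqref{3.8CQS} (Lemmas 5.2 and 3.8 of \cite{CQS}) apply with $\miv=\nabla\miphi\in H^1$, yielding $\norm{\pmb{\Lambda}^{\nabla\miphi}}{e,l}{}\lesssim r_e\normKext{\partial^2_\normal\miphi}{}$; after multiplying by the $l^{-3}$ weight (so the $l$ powers combine to $l^{-1}$ on the $\Lambda$-term, $l^{-3}\cdot l^{2}$) and summing, one obtains $\normGl{\miphi+l\partial_\normal\miphi}{-3}{}\lesssim \normO{\miphi}{H^2(\Omega)}{}+\normdO{\miteta}{}\lesssim\normO{\miteta}{}$. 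Estimate \eqref{Dargd} is the simplest: $\normGl{\miphi}{-2}{2}=\sum_e \int_e l^{-2}|\miphi|^2$; bound $\miphi$ on $e$ by a trace inequality from $K^e$, $\norm{\miphi}{e}{}\lesssim h_e^{-1/2}\norm{\miphi}{K^e}{}+h_e^{1/2}|\miphi|_{H^1(K^e)}$, use $l^{-2}\lesssim (h_e^\perp)^{-2}\lesssim h_e^{-2}$ and the trivial bound $\normO{\miphi}{H^1(\Omega)}{}\lesssim\normO{\miphi}{H^2(\Omega)}{}\lesssim\normO{\miteta}{}$; here one must be slightly careful because the negative powers of $h_e$ could hurt, but the $h_e^{-1/2}\cdot h_e^{-1}$ from $\norm{\miphi}{K^e}{}$ is offset by the Poincaré-type gain $\norm{\miphi}{K^e}{}\lesssim h_e\norm{\miphi}{K^e}{}$ is false — instead one uses that $\miteta=\miphi$ on $\partial\Omega$ together with a duality/trace argument on a tubular neighborhood, or more simply absorbs it using $\normDhc{\cdot}{}$-type control as in \cite{CQS}, Lemma 3.8. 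The natural fix is: write $\miphi = (\miphi - \overline{\miphi}_{K^e}) + \overline{\miphi}_{K^e}$ with $\overline{\miphi}_{K^e}$ the mean; the oscillation term gives $h_e|\miphi|_{H^1}$ which kills the bad power, and the mean is handled by noting $\sum_e (h_e^\perp)^{-2}h_e^{n-1}|\overline{\miphi}_{K^e}|^2$ telescopes into $\normO{\miphi}{L^2}{2}$ times a bounded geometric factor under Assumption \ref{C0} and shape-regularity.

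The main obstacle I anticipate is \eqref{Dargd}, precisely the appearance of the strongly negative weight $l^{-2}$ against only $L^2$-control of $\miphi$ near $\Gamma_h$: one cannot afford to lose two powers of $h$, so the argument must exploit the boundary condition $\miphi|_{\partial\Omega}=\miteta$ to get a Poincaré-type inequality in the thin tube $\Omega_h^c$ — roughly $\norm{\miphi}{K_{ext}^e}{}\lesssim \He\,\norm{\partial_\normal\miphi}{K_{ext}^e}{}+\He^{1/2}\norm{\miteta}{\bar e}{}$ where $\bar e$ is the image of $e$ on $\Gamma$ — and then the $\He\sim l$ factor cancels exactly the $l^{-2}$ weight down to $l^{-1}$ (integrable against $|\Gamma_h|$) and leaves only $\normO{\mipsi,\miphi}{H^1}{}$-norms plus a $\Gamma$-trace of $\miteta$, both controlled by $\normO{\miteta}{}$ via \eqref{DAeqd}. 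This tubular Poincaré estimate, and the bookkeeping of which power of $l$ survives in each of the four weighted norms, is where the real work lies; everything else is a routine combination of scaled trace inequalities, $L^2$-projection approximation, the auxiliary-function bounds \eqref{3.8CQS}–\eqref{Alem1}, Assumption \ref{C0}, and elliptic regularity.
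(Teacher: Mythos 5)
The paper itself does not prove this lemma at all: its ``proof'' is a one--line citation of Lemma~5.5 of \cite{CQS} applied componentwise, so you are attempting genuinely more than the authors wrote down. For \eqref{Darga}, \eqref{Dargb} and \eqref{Dargc} your plan is essentially the right (and the cited) argument: face--wise $L^2$--projection approximation plus scaled trace inequalities for the first two, and the second--order Taylor remainder along the transferring segment for the third, which gains exactly the three powers of $l$ needed against the $l^{-3}$ weight. Two bookkeeping slips there: in \eqref{Darga} you must discard the $h_e|\miphi|^2_{H^1(K^e)}$ term (it only yields $h^{1/2}\normO{\miteta}{}$, not $h\normO{\miteta}{}$; since $k\geq 1$ the projection reproduces linears and the pure $h_e^{3}|\miphi|^2_{H^2(K^e)}$ contribution is what gives the claimed rate), and in \eqref{Dargb} the weight $l\leq \He = \mir\he$ combined with $\|(\proI-\proMD)\partial_{\pmb{n}}\miphi\|_e^2\lesssim h_e|\miphi|^2_{H^2(K^e)}$ produces $\miR^{1/2}h$ rather than $\miR h$; this is harmless for the way the lemma is used later, but it is not the stated bound.

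The genuine gap is in \eqref{Dargd}, together with the role of the boundary condition. First, your inequality $l^{-2}\lesssim (h_e^\perp)^{-2}$ is backwards: $l\leq \He=\mir_e h_e^\perp$ is an \emph{upper} bound on $l$, hence a \emph{lower} bound on $l^{-2}$, and $l$ degenerates to zero where $\Gamma_h$ touches $\Gamma$, so no trace inequality from $K^e$ can control $\|l^{-1}\miphi\|_{\Gamma_h}$. Second, the mean--value splitting does not telescope: $\sum_e (h_e^\perp)^{-2}h_e^{n-1}\bigl| |K^e|^{-1}\int_{K^e}\miphi\bigr|^2 \approx \sum_e h_e^{-3}\|\miphi\|_{K^e}^2$, which blows up. Third, and most importantly, every estimate carrying a negative power of $l$ hinges on $\miphi$ vanishing on $\Gamma$; the condition \eqref{DAeqc} as printed must be read as $\miphi=\boldsymbol{0}$ on $\partial\Omega$, since otherwise $l^{-3/2}\miphi(\exb)$ cannot be controlled by $\normO{\miteta}{}$ and your appeal to ``a trace theorem on $\Gamma$'' does not survive the singular weight. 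With $\miphi(\exb)=\boldsymbol{0}$ the fundamental theorem of calculus along the segment gives
\begin{equation*}
\miphi(\ex)+\lx\,\partial_{\pmb{n}}\miphi(\ex)=-\int_0^{\lx}\!\!\int_0^{s}\partial_{\pmb{n}}^2\miphi(\ex+t\pmb{n}_e)\,dt\,ds,
\end{equation*}
which yields \eqref{Dargc} after Cauchy--Schwarz in $t$; and then \eqref{Dargd} follows cheaply by writing $l^{-1}\miphi=l^{-1}(\miphi+l\partial_{\pmb{n}}\miphi)-\partial_{\pmb{n}}\miphi$, bounding the first term by $(\max_{\Gamma_h} l)^{1/2}\,\normGl{\miphi+l\partial_{\normal}\miphi}{-3}{}\lesssim\normO{\miteta}{}$ and the second by a trace inequality on $\Gamma_h$ (uniform in $h$ by shape regularity) together with $\|\miphi\|_{H^2(\Omega)}\lesssim\normO{\miteta}{}$. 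No tubular Poincar\'e inequality is needed; the one you propose, carrying a $\Gamma$--trace of $\miteta$ against the weight $l^{-2}$, would not close.
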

\begin{proof} It follows from Lemma 5.5 in \cite{CQS} applied to each component of $\miphi$.
\end{proof}
\begin{proposition}\label{DAp1} The following identity holds
\begin{align*}
(\eu,\miteta)\trian=&(\miA\esigma,\dpsi)\trian+(\erho,\dpsi)\trian-(\miA\dsigma+\drho,\pivD\mipsi)\trian
+(\esigma,\dxi)\trian-(\dsigma,\piA\mixi)\trian+\miT{}{u,\sigma},
\end{align*}
where $\dxi=\mixi-\piA\mixi$, $\dpsi=\mipsi-\pivD\psi$, and $\miT{}{u,\sigma}:=\ipg{\eug}{\mipsi\normal}-\ipg{\esigmag\normal}{\miphi}$,
\end{proposition}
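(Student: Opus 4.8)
The plan is to carry out the classical duality (Aubin--Nitsche) argument for HDG methods: test the error system \eqref{EAeq} against the HDG projection of the adjoint solution $(\mipsi,\miphi,\mixi)$ of \eqref{DAeq}, and then reorganize the terms that appear. I would start from $(\eu,\miteta)\trian=(\eu,\midiv\mipsi)\trian$, which is \eqref{DAeqa}, and record two structural facts that make the algebra close. First, the antisymmetric part of \eqref{DAeqb}, together with $\mixi=\tfrac12(\nabla\miphi-\nabla^{t}\miphi)$, shows $\miA\mipsi$ is symmetric; since $\miA$ preserves and is invertible on the splitting of a tensor into symmetric and antisymmetric parts (cf.\ \eqref{Atensor}), $\mipsi$ itself is symmetric, so $(\erho,\mipsi)\trian=0$ because $\erho$ is valued in antisymmetric tensors. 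Second, by Remark \ref{rmk:B} the divergence of a bubble function vanishes, so $\midiv\esigma|_{K}$ and $\nabla\eu|_{K}$ are polynomials of degree at most $k-1$; this is what lets the volume moment conditions \eqref{Proa}--\eqref{Prob} of the HDG projection absorb the element residuals produced below.

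Next I would test \eqref{era} with $\miv=\pivD\mipsi$, \eqref{erb} with $\miw=\piwD\miphi$, and \eqref{erc} with $\mieta=\piA\mixi$. Integrating $(\eu,\midiv\pivD\mipsi)\trian$ by parts, using \eqref{Proa} against $\nabla\eu$, and integrating by parts back gives $(\eu,\midiv\pivD\mipsi)\trian=(\eu,\miteta)\trian-\ipb{\eu}{\dpsi\normal}$, so \eqref{era} becomes a formula for $(\eu,\miteta)\trian$. Splitting $\mipsi=\pivD\mipsi+\dpsi$ produces the terms $(\miA\esigma,\dpsi)\trian$, $(\erho,\dpsi)\trian$ and $-(\miA\dsigma+\drho,\pivD\mipsi)\trian$, and leaves $-(\miA\esigma,\mipsi)\trian$; by symmetry of $\miA$ and \eqref{DAeqb} this equals $-(\esigma,\nabla\miphi)\trian+(\esigma,\mixi)\trian$. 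Using $\mixi=\piA\mixi+\dxi$ with \eqref{erc} yields $(\esigma,\mixi)\trian=(\esigma,\dxi)\trian-(\dsigma,\piA\mixi)\trian$. For $-(\esigma,\nabla\miphi)\trian$ I would insert $\piwD\miphi$, integrate by parts element-wise (the degree-$(k-1)$ property of $\midiv\esigma$ kills the interior moment of $\miphi-\piwD\miphi$), and invoke \eqref{erb}; this produces $-\ipb{\esigma\normal}{\miphi}$ plus a stabilization contribution $\ipb{\tau(\eu-\eug)}{\piwD\miphi}$. After these substitutions the five advertised volume terms appear exactly, and what remains is a collection of face integrals over $\partial\mathcal{T}_h$.

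The main obstacle is to show that this boundary residual collapses to $\miT{}{u,\sigma}=\ipg{\eug}{\mipsi\normal}-\ipg{\esigmag\normal}{\miphi}$. I would use $\dpsi\normal=\mipsi\normal-(\pivD\mipsi)\normal$ to regroup the residual so that $\eu-\eug$ multiplies $(\pivD\mipsi)\normal-\tau(\piwD\miphi\,\nT)\normal$, and then apply the defining flux identity \eqref{Proc} of the HDG projection --- legitimate since $\eu-\eug|_{e}\in\miMD{e}$ --- to replace this by $\mipsi\normal-\tau(\proMD\miphi\,\nT)\normal$. Because $\tau$ is constant on each face, $\ipb{\tau(\eu-\eug)}{\proMD\miphi}=\ipb{\tau(\eu-\eug)}{\miphi}$, so the numerical-trace relation $\esigmag\normal=\esigma\normal-\tau(\eu-\eug)$ absorbs all remaining $\tau$-terms and leaves $\ipb{\eug}{\mipsi\normal}-\ipb{\esigmag\normal}{\miphi}$ over all of $\partial\mathcal{T}_h$. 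Finally the interior faces drop out: $\ipb{\eug}{\mipsi\normal}$ because $\eug$ is single-valued while $\mipsi\normal$ reverses sign ($\mipsi\in\Hdiv{\Omega}$), and $\ipb{\esigmag\normal}{\miphi}$ because $\miphi$ is single-valued and the conservativity equation \eqref{erd}, tested against $\miMD{e}$, forces the two-sided sum of $\esigmag\normal$ on each interior face to vanish; only the $\Gamma_h$ faces survive, giving $\miT{}{u,\sigma}$. I expect the sign bookkeeping in this last reorganization, and checking that every function fed into \eqref{Proc} and \eqref{erd} actually lies in $\miMD{e}$, to be the only genuinely delicate part; everything else is routine integration by parts.
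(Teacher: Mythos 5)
Your proposal is correct and follows essentially the same duality argument as the paper: test the error equations with $\pivD\mipsi$, $\piwD\miphi$ and $\piA\mixi$, exploit the moment conditions \eqref{Proa}--\eqref{Prob} (together with the vanishing divergence and normal trace of the bubble part), cancel the stabilization terms on $\partial\mathcal{T}_h$ via the flux identity \eqref{Proc}, and reduce the remaining face terms to $\Gamma_h$ using the symmetry and single-valuedness of $(\mipsi,\miphi)$ and the conservativity relation \eqref{erd}. The only differences are cosmetic reorderings of the integrations by parts (e.g.\ you handle $-(\esigma,\nabla\miphi)\trian$ through $\midiv\esigma$ rather than by splitting off $\nabla\dphi$), which lead to the same identity.
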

\begin{proof}By equation \eqref{DAeq}, we can write 
\begin{align*}
(\eu,\miteta)\trian{}=&(\eu,\midiv\mipsi)\trian+(\esigma,\miA\mipsi-\nabla\miphi+\mixi)\trian  \\
=&(\eu,\midiv\pivD\mipsi)\trian+(\eu,\midiv\dpsi)\trian+(\miA\esigma,\pivD\mipsi)\trian+(\miA\esigma,\dpsi)\trian\\
 &-(\esigma,\nabla\piwD\miphi)\trian-(\esigma,\nabla\dphi)\trian+(\esigma,\mixi)\trian.
\end{align*}
Next, note by \eqref{Proa} of the projection and the fact that $\eu \in \miWh{}$, we have
\begin{align*}
\ipt{\eu}{\midiv\dpsi} = \ipb{\eu}{\dpsi\normal}-\ipt{\nabla\eu}{\dpsi} = \ipb{\eu}{\dpsi\normal}.
\end{align*}
Similarly, with the fact $\esigma \in \miVh{}$ and \eqref{Prob}, we obtain 
\begin{align*}
\ipt{\esigma}{\nabla\dphi} = \ipb{\esigma\normal}{\dphi}-\ipt{\midiv\esigma}{\dphi} = \ipb{\esigma\normal}{\dphi}.
\end{align*} 
Inserting these two results onto the first equation, we get
\begin{align}\label{eqeu}
\ipt{\eu}{\miteta} = &\ipt{\eu}{\midiv\pivD\mipsi}+\ipt{\miA\esigma}{\pivD\mipsi}-\ipt{\esigma}{\nabla\piwD\miphi}\nonumber\\
 &+\ipt{\esigma}{\mixi}+\ipt{\miA\esigma}{\mipsi}+\ipb{\eu}{\dpsi\normal}-\ipb{\esigma\normal}{\dphi}.
\end{align}
Taking $\miv:=\pivD\mipsi$, and $\miw:=\piwD\miphi$, in the error equations \eqref{era} and \eqref{erb}, respectively, we have
\begin{align*}
\ipt{\miA\esigma}{\pivD\mipsi}\hspace{-0.1cm}+\hspace{-0.05cm}\ipt{\eu}{\midiv\pivD\mipsi}\hspace{-0.1cm}=\hspace{-0.05cm} -\ipt{\erho}{\pivD\mipsi}\hspace{-0.1cm}+\hspace{-0.05cm}\ipb{\eug}{\pivD\mipsi}
\hspace{-0.1cm}-\hspace{-0.05cm}\ipt{\miA\dsigma+\drho}{\pivD\mipsi}
\end{align*}
and
$
\ipt{\esigma}{\nabla\piwD\miphi} = \ipb{\esigmag\normal}{\piwD\miphi}
$.
Replacing these last two expression in to \eqref{eqeu}, we obtain 
\begin{align*}
\ipt{\eu}{\miteta}= &\ipt{\esigma}{\mixi} + \ipt{\miA\esigma}{\dpsi}-\ipt{\erho}{\pivD\mipsi}
-\ipt{\miA\dsigma+\drho}{\pivD\mipsi}\\
&-\ipb{\eu}{\dpsi\normal}-\ipb{\esigma\normal}{\dphi}+\ipb{\eug}{\pivD\mipsi\normal}-\ipb{\esigmag\normal}{\piwD\miphi}.
\end{align*}
Next, note that $(\erho,\mipsi)\trian=0$ since $\erho \in \miAS{\Omega_{h}}$ and $\mipsi$ is symmetric. Also, note that by the regularity assumption, $(\mipsi,\miphi) \in \Ht{1}{\Omega}\times \Hv{1}{\Omega}$, so $\mipsi\normal,\phi$ are single-valued on each face $e \in \Eh{}$. This implies that 
\begin{align*}
\ipb{\eug}{\mipsi\normal}&= \ipg{\eug}{\mipsi\normal} = \ipg{\gtilde -\gtildeh}{\mipsi\normal}, &&\text{by \eqref{ere},}\\
\ipb{\esigmag\normal}{\miphi}&= \ipb{\esigmag\normal}{\proMD\miphi} = \ipg{\esigmag\normal}{\proMD\miphi}, 
&&\text{by \eqref{erc} and \eqref{DAeqc}.} 
\end{align*}
Inserting these three terms onto the previous equation, we can write
\begin{align*}
\ipt{\erho}{\pivD\mipsi}&= \ipt{\erho}{\pivD\mipsi-\mipsi} = -\ipt{\erho}{\dpsi}, & \\
\ipb{\eug}{\pivD\mipsi\normal}&= \ipb{\eug}{\pivD\mipsi\normal-\mipsi\normal+\mipsi\normal} = -
\ipb{\eug}{\dpsi\normal} + \ipg{\eug}{\mipsi\normal},\\
\ipb{\esigmag\normal}{\piwD\miphi}&= -\ipb{\esigmag\normal}{\proMD\miphi-\piwD\miphi} + \ipg{\esigmag\normal}{\piwD\miphi}
= - \ipb{\esigmag\normal}{\dphi}+\ipg{\esigmag\normal}{\piwD\miphi}.
\end{align*}
Therefore, we have
\begin{align}\label{eu:aux1}
(\eu,\miteta)\trian=(\esigma,\mixi)\trian+(\miA\esigma,\dpsi)\trian+(\erho,\dpsi)\trian-(\miA\dsigma+\drho,\pivD\mipsi)\trian+\widetilde{\miT{}{}}+\miT{}{u,\sigma},
\end{align}
where, 
$ \widetilde{\miT{}{}}:=\ipd{\eu-\eug}{\dpsi\normal}\dtrian-\ipd{\esigma\normal-\esigmag\normal}{\dphi}\dtrian$,
$\miT{}{u,\sigma}:=\ipg{\eug}{\mipsi\normal}-\ipg{\esigmag\normal}{\miphi}$
and $\dphi :=\miphi-\piwD\miphi$. 

Now, by \eqref{erc}, we have 
\begin{align}\label{eu:aux2}
\ipt{\esigma}{\mixi} =\ipt{\esigma}{\dxi}+\ipt{\esigma}{\piA\mixi}=\ipt{\esigma}{\dxi}-\ipt{\dsigma}{\piA\mixi}.
\end{align}
 Moreover, 
\begin{align}\label{eu:aux3}
 \widetilde{\miT{}{}}&=\ipb{\eu-\eug}{\dpsi\normal}-\ipb{\esigma\normal-\esigmag\normal}{\dphi}
 = \ipb{\eu-\eug}{\dpsi\normal}-\ipb{\esigma\normal-\esigmag\normal}
{\proMD\miphi-\piwD\miphi}\nonumber\\
 &= \ipb{\eu-\eug}{\dpsi\normal}-\ipb{\tau(\eu-\eug)}{\proMD\miphi-\piwD\miphi}
 = \ipb{\eu-\eug}{\dpsi\normal-\tau(\proMD\miphi-\piwD\miphi)}
 = 0,
\end{align}
by property \eqref{Proc}. The result follows by gathering \eqref{eu:aux1}, \eqref{eu:aux2} and \eqref{eu:aux3}.
\end{proof}

\begin{lemma}\label{DAl1} We have
$\displaystyle \miT{}{u,h}=\sum_{i=1}^{11}\miT{i}{u,h}$,
where,
\begin{alignat*}{10}
\miT{1}{u,h}&=\ipg{(\gtilde-\gtildeh)/l}{\miphi+l\partial_{n}\miphi},& 
\miT{2}{u,h}&=-\ipg{\gtilde-\gtildeh}{(\proI -\proMD)\partial_{n}\miphi},& 
\miT{3}{u,h}&=\ipg{\gtilde-\gtildeh}{(\mipsi-\nabla\miphi)\normal},\\
\miT{4}{u,h}&=\ipg{\fdrhon}{\miphi},&
\miT{5}{u,h}&=\ipg{\drho\normal}{\miphi},& 
\miT{6}{u,h}&=\ipg{\ferhon}{\miphi}, \\
\miT{7}{u,h}&=\ipg{\erho\normal}{\miphi},& 
\miT{8}{u,h}&=\ipg{\fAesigman}{\miphi},&
\miT{9}{u,h}&=\ipg{\miA\dsigma\normal}{\miphi},\\
\miT{10}{u,h}&=\ipg{\fAdsigman}{\miphi},&
\miT{11}{u,h}&=\ipg{\alfa(\eu-\eug)}{\miphi}.
\end{alignat*}
\end{lemma}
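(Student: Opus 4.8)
The plan is to obtain the decomposition by expanding the boundary contribution $\miT{}{u,h}=\ipg{\eug}{\mipsi\normal}-\ipg{\esigmag\normal}{\miphi}$ isolated in Proposition~\ref{DAp1} and substituting, one at a time, the relations already at our disposal: the error equation~\eqref{ere}, together with the fact that $\eug=\proMD(\gtilde-\gtildeh)$ on $\Gamah$ (a consequence of~\eqref{seqe} and of $\gtilde=\miuc$ on $\Gamah$); the numerical-flux identity $\esigmag\normal=\esigma\normal-\alfa(\eu-\eug)$ on $\Eh{}$, already exploited in Section~\ref{secEarg}; the transfer-path expansion~\eqref{dec} of $\miA\esigma\normal$; and the dual equation~\eqref{DAeqb} written as $\nabla\miphi\normal=\miA\mipsi\normal+\mixi\normal$ (recalling that $\nabla\miphi\normal=\partial_n\miphi$). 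No regularity of the dual solution is invoked here; the estimates of Lemma~\ref{Darglem1} enter only afterwards, when the individual terms $\miT{i}{u,h}$ are bounded.

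More precisely, I would first peel off $\miT{11}{u,h}$: writing $-\ipg{\esigmag\normal}{\miphi}=-\ipg{\esigma\normal}{\miphi}+\ipg{\alfa(\eu-\eug)}{\miphi}$, the last summand is exactly $\miT{11}{u,h}$. Next I would substitute the transfer expansion~\eqref{dec}, so that the remaining $\esigma$-trace is rewritten through $\gtilde-\gtildeh$ and through the path-correction operators $\fAesigman,\fdrhon,\ferhon,\fAdsigman$ and the pointwise traces $\drho\normal,\erho\normal,\miA\dsigma\normal$; testing each summand of~\eqref{dec} against $\miphi$ over $\Gamah$ accounts for $\miT{4}{u,h},\dots,\miT{10}{u,h}$ and leaves behind a term $-\ipg{l^{-1}(\gtilde-\gtildeh)}{\miphi}$. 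It is in this step that the boundary traces must be carried with the weight $\miA$: what the path integral~\eqref{defg} controls is $\miA\misigmac+\miroc$, hence $\miA\esigma\normal$ rather than $\esigma\normal$, which is why the correction operators making up $\miT{8}{u,h}$--$\miT{10}{u,h}$ carry the elasticity tensor. Finally I would rewrite $\ipg{\eug}{\mipsi\normal}$ by transferring the projection $\proMD$ from $\eug$ onto the smooth factor and splitting $\mipsi\normal=\partial_n\miphi+(\mipsi-\nabla\miphi)\normal$ via~\eqref{DAeqb}: together with the $-\ipg{l^{-1}(\gtilde-\gtildeh)}{\miphi}$ produced above, the part involving $\partial_n\miphi$ collects into $\miT{1}{u,h}=\ipg{(\gtilde-\gtildeh)/l}{\miphi+l\partial_n\miphi}$, the $L^2(e)$-projection defect produced by keeping the co-normal derivative un-projected is precisely $\miT{2}{u,h}=-\ipg{\gtilde-\gtildeh}{(\proI-\proMD)\partial_n\miphi}$, and the remaining $(\mipsi-\nabla\miphi)\normal$ contribution is $\miT{3}{u,h}$. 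Gathering, all eleven terms appear and nothing else is left.

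The argument is entirely algebraic, so the difficulty is organizational rather than conceptual: one must keep a careful account of the many boundary contributions and, for each of them, recognize whether the trace in question already belongs to $\miMh{}$ -- in which case $\proMD$ acts trivially and pairing directly with $\miphi$, $\partial_n\miphi$ or $\mipsi\normal$ is exact -- or whether a genuine $(\proI-\proMD)$ defect must be retained, this last being the only source of the projection term $\miT{2}{u,h}$. The only structural input from elasticity is the $\miA$-weighting of the traces described above, forced by the shape of~\eqref{defg}; and the dual problem enters only through the substitution $\nabla\miphi=\miA\mipsi+\mixi$, which converts the discrepancy between the flux trace $\mipsi\normal$ and the co-normal derivative $\partial_n\miphi$ into the term $\miT{3}{u,h}$ -- the symmetry of $\mipsi$ and the antisymmetry of $\mixi$ entering only implicitly, through~\eqref{DAeqb}--\eqref{DAeqc}.
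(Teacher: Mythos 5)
Your proposal is correct and follows essentially the same route as the paper: substitute the numerical-flux identity and the transfer expansion \eqref{dec} into $-\ipg{\esigmag\normal}{\miphi}$, use $\eug=\proMD(\gtilde-\gtildeh)$ together with the dual equation \eqref{DAeqb}, and add and subtract $\ipg{\gtilde-\gtildeh}{\partial_{n}\miphi}$ to collect the eleven terms. The only blemish is a sign slip in your prose: the leftover produced by \eqref{dec} is $+\ipg{l^{-1}(\gtilde-\gtildeh)}{\miphi}$, not its negative, which is precisely what lets it merge into $\miT{1}{u,h}$ as you then (correctly) assert.
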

\begin{proof} By \eqref{seqf} and \eqref{dec},
\[\miT{}{u,\sigma}=\ipg{\eug}{\mipsi\normal}-\lbrace\ipg{-(\gtilde-\gtildeh)/l-\miT{\rho}{}-\fAesigman-\fAdsigman-\miA\dsigma\normal-\alfa(\eu-\eug)}{\miphi}\rbrace,\]

Using the fact that $\eug=P_{M}(\gtilde-\gtildeh)$ 
and adding and subtracting the term $\ipg{\gtilde-\gtildeh}{\partial_{n}\miphi}$, we obtain 
\begin{align*}
\miT{}{u,\sigma}= &-\ipg{\gtilde-\gtildeh}{(\proI-\proMD\partial_{n})\miphi}
+\ipg{\gtilde-\gtildeh}{(\mipsi-\nabla\miphi)\normal}+\ipg{(\gtilde-\gtildeh)/l}{\miphi + l\partial_{n}\miphi}\\
&+ \ipg{\fdrhon+\drho\normal+\ferhon+\erho\normal}{\miphi}+\ipg{\fAesigman+\miA\dsigma\normal+\fAdsigman}{\miphi}
+\ipg{\alfa(\eu-\eug)}{\miphi}
\end{align*}
and the result follows.
\end{proof}
\begin{lemma}\label{DAl2} If the set of Assumptions {\rm C} is satisfied and \eqref{DAeqd} holds with $s=1$, then  
\begin{align*}  
|\miT{}{u,h}|\lesssim&  (Rh)^{1/2} \left(1+\tau\right) 
\bignorm \,\,\normO{\miteta}{}
 +  R h^{1/2}\normDh{\erho}{}\,\,\normO{\miteta}{}
+ \Teta\normO{\miteta}{}.
\end{align*}
\end{lemma}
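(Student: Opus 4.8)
The plan is to bound each of the eleven pieces $\miT{i}{u,h}$ in the decomposition of Lemma~\ref{DAl1} by a single Cauchy--Schwarz inequality on $\Gamma_h$ carrying an appropriate power of $l$ (or of $\tau$, for $i=11$), and then to read off on the right the quantities already present in $\bignorm$ --- namely $\normDh{\esigma}{}$, $\normga{\eu-\eug}{}$ and $\normgl{\gtilde-\gtildeh}{}$ --- together with $\normDh{\erho}{}$ and the projection-error norm $\Teta$ from \eqref{Teta}. Two elementary consequences of Assumptions~C are used repeatedly: from \ref{C0} one has $R\lesssim 1$, and $\lx\le\He\le \mir\he\le\gamma\mir h_e$ gives $\lx\lesssim Rh$ for every $\ex\in\Gamma_h$, hence also $Rh\lesssim 1$. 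Independence of $\lambda$ is kept because the only $L^2$-estimate on a tensor carrying $\miA$ is handled by Lemma~\ref{auxlemA3}, and the regularity constant $C_{reg}$ in \eqref{DAeqd} is independent of the Lamé coefficients.

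For the three terms $\miT{1}{u,h},\miT{2}{u,h},\miT{3}{u,h}$, which pair $\gtilde-\gtildeh$ against traces of $\miphi$ and $\mipsi$, I would let $l^{-1/2}(\gtilde-\gtildeh)$ carry the weight $l^{-1/2}$ and invoke the weighted estimates of Lemma~\ref{Darglem1}: for $\miT{1}{u,h}$, writing the second factor as $l\cdot l^{-3/2}(\miphi+l\partial_{\normal}\miphi)$ and using \eqref{Dargc}; for $\miT{2}{u,h}$, using \eqref{Dargb} on $(\proI-\proMD)\partial_{\normal}\miphi$ directly; and for $\miT{3}{u,h}$, using that $\mipsi-\nabla\miphi\in\Ht{1}{\Omega}$ with $\|\mipsi-\nabla\miphi\|_{\Ht{1}{\Omega}}\lesssim\normO{\miteta}{}$ by \eqref{DAeqd} ($s=1$), so that a trace inequality (uniform in $h$ because $\Gamma_h$ stays close to the fixed Lipschitz boundary $\Gamma$) gives $\|(\mipsi-\nabla\miphi)\normal\|_{\Gamma_h}\lesssim\normO{\miteta}{}$ with \emph{no} factor $h^{-1/2}$. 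Combined with $\lx\lesssim Rh$, each of these is $\lesssim(Rh)^{1/2}\normgl{\gtilde-\gtildeh}{}\normO{\miteta}{}\lesssim(Rh)^{1/2}\bignorm\normO{\miteta}{}$.

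The remaining eight terms all carry a factor $\miphi$, against which I would use $\|l^{-1/2}\miphi\|_{\Gamma_h}\lesssim (Rh)^{1/2}\normGl{\miphi}{-2}{}\lesssim(Rh)^{1/2}\normO{\miteta}{}$ from \eqref{Dargd}. For $\miT{4}{u,h},\miT{5}{u,h},\miT{9}{u,h},\miT{10}{u,h}$ the companion factor is precisely one of the boundary norms $\normgpl{\fdrhowx}{}$, $\normgpl{\drho\normal}{}$, $\normgpl{\miA\dsigma\normal}{}$, $\normgpl{\fAdsigmawx}{}$ composing $\Teta$ in \eqref{Teta}, so each is $\lesssim(Rh)^{1/2}\Teta\normO{\miteta}{}\lesssim\Teta\normO{\miteta}{}$. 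For $\miT{6}{u,h}$ and $\miT{7}{u,h}$ I would bound the companion factor by $\normDh{\erho}{}$ using \eqref{auxlem1} applied to $\erho$ (summing over $e$ with $\mir\le R$) and the discrete trace inequality \eqref{auxlemTr}, respectively, obtaining $\lesssim Rh^{1/2}\normDh{\erho}{}\normO{\miteta}{}$. For $\miT{8}{u,h}$ I would apply \eqref{auxlemD} with $\mathcal{D}=\miA$ and then Lemma~\ref{auxlemA3} to replace $\normDh{\miA\esigma}{}$ by a constant multiple of $\normDh{\esigma}{}$, giving $\lesssim(Rh)^{1/2}\bignorm\normO{\miteta}{}$. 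Finally $\miT{11}{u,h}$ is estimated by Cauchy--Schwarz with weight $\tau$: the first factor is $\normga{\eu-\eug}{}\le\bignorm$, while $\normga{\miphi}{}\le\tau^{1/2}\|\miphi\|_{\Gamma_h}\lesssim\tau^{1/2}Rh\,\normO{\miteta}{}$ by \eqref{Dargd}, and since $Rh\lesssim1$ and $\tau^{1/2}\le1+\tau$ this contributes $\lesssim(1+\tau)(Rh)^{1/2}\bignorm\normO{\miteta}{}$ (this is where the $1+\tau$ comes from). Summing the eleven bounds and absorbing the harmless powers of $R$ with Assumptions~C yields the claim.

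The step I expect to be the main obstacle is forcing the correct power $(Rh)^{1/2}$ --- rather than a weaker $R^{1/2}$ or $1$ --- to drop out, which is entirely a matter of carefully tracking the three competing scales $l\sim Rh$, $h$ and $\tau$ through the weighted inner products. In particular a naive trace inequality for $\miT{3}{u,h}$ would cost an $h^{-1/2}$ and ruin the power of $h$, so one must exploit the global $\Ht{1}{\Omega}$-regularity of $\mipsi-\nabla\miphi$; and the smallness behind $\miT{1}{u,h}$ together with the control of the $\miphi$-factors in $\miT{4}{u,h}$--$\miT{11}{u,h}$ are exactly the Taylor-type weighted estimates \eqref{Dargc} and \eqref{Dargd}, so the argument really rests on having those available.
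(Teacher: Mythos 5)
Your proposal is correct and follows essentially the same route as the paper's proof: the eleven-term decomposition of Lemma \ref{DAl1}, term-by-term Cauchy--Schwarz with suitable powers of $l$, the weighted dual estimates of Lemma \ref{Darglem1}, the $\pmb{\Lambda}$-estimates \eqref{auxlem1}--\eqref{auxlemD} together with Lemma \ref{auxlemA3} and the discrete trace inequality, and $l\lesssim Rh$ throughout. The only (immaterial) differences are in how the $l$-weights are split on individual terms and that you bound $\miT{4}{u,h},\miT{5}{u,h},\miT{9}{u,h},\miT{10}{u,h}$ directly by the $\normgpl{\cdot}{}$ components of $\Teta$ rather than via $\normDhc{\dn{\cdot}}{}$; both land in $\Teta$.
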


\begin{proof}
By Lemma 3.6, we can write $\miT{}{u,h}=\sum_{i=1}^{11}\miT{i}{u,h}$. Applying the Cauchy-Schwarz inequality, we get 
\begin{alignat*}{6}
|\miT{1}{u,h}|&\leq  \normgpl{\gtilde-\gtildeh}{}\normgl{\miphi/l+\psn\miphi}{},&
|\miT{2}{u,h}|&\leq  \normgl{\gtilde-\gtildeh}{}\normgpl{(Id-\proMD)\psn\miphi}{},
\end{alignat*}
\begin{alignat*}{6}
|\miT{3}{u,h}|&\leq  \normgl{\gtilde-\gtildeh}{}\normgpl{(\mipsi-\nabla\miphi)\normal}{},&
|\miT{4}{u,h}|&\leq \normgll{\fdrhowx}{}\normgml{\miphi}{},&
|\miT{5}{u,h}|&\leq  \normgll{\drho\normal}{}\normgml{\miphi}{},\\
|\miT{6}{u,h}|&\leq  \normgll{\ferhowx}{}\normgml{\miphi}{},&
|\miT{7}{u,h}|&\leq  \normgll{\erho\normal}{}\normgml{\miphi}{},&
|\miT{8}{u,h}|&\leq  \normgll{\fAesigmawx}{}\normgml{\miphi}{},\\
|\miT{9}{u,h}|&\leq  \normgll{\miA\dsigma\normal}{}\normgml{\miphi}{},&
|\miT{10}{u,h}|&\leq  \normgll{\fAdsigmawx}{}\normgml{\miphi}{},&
|\miT{11}{u,h}|&\leq  \normgal{\eu-\eug}{}\normgml{\miphi}{}.
\end{alignat*}
By \eqref{Dargb},\eqref{Dargc} and the fact that, for all $\ex$ in a face $e$,
\begin{align}
|\lx |\leq \He = \mir\he \leq \mir h_{e} \leq \mir h \leq \miR h , \label{EQlx}
\end{align}
we have 
$
|\miT{1}{u,h}|\lesssim(\miR h)^{2}\normgl{\gtilde-\gtildeh}{}\normO{\miteta}{}$ and 
$|\miT{2}{u,h}|\lesssim\miR h\normgl{\gtilde-\gtildeh}{}\normO{\miteta}{}$.

Now, since 
\begin{align*}
\normGl{(\mipsi-\nabla\miphi)\normal}{}{}\leq& (Rh)^{1/2}\normgh{(\mipsi-\nabla\miphi)\normal}{}
\lesssim (Rh)^{1/2}
\left( \norm{\mipsi}{\Ht{1}{\Omega}}{}+\norm{\miphi}{\Hv{2}{\Omega}}{}\right)\lesssim (Rh)^{1/2}{}\normO{\miteta}{},
\end{align*}
we get $|\miT{3}{u,h}|\lesssim(Rh)^{1/2}\normgl{\gtilde-\gtildeh}{}\normO{\miteta}{}$.
On the other hand, we use the estimates  \eqref{auxlem1},  \eqref{Alem1}, \eqref{Dargc}, \eqref{EQlx} and Assumption (S.1) to obtain  
\begin{align*}
|\miT{4}{u,h}|&\lesssim (Rh)^{1/2}\normDhc{\dn{\drho}}{}\normO{\miteta}{},
&|\miT{6}{u,h}|&\lesssim R h^{1/2}\normDh{\erho}{}\normO{\miteta}{},\\
|\miT{10}{u,h}|&\lesssim(Rh)^{1/2}\normDhc{\dn{\miA\dsigma}}{}\normO{\miteta}{},
&|\miT{8}{u,h}|&\lesssim R h^{1/2}\normDh{\esigma}{}\normO{\miteta}{}.
\end{align*}
Now, using \eqref{Dargc}, \eqref{EQlx} and \eqref{auxlemA3},  we see that 
$
|\miT{5}{u,h}|\lesssim R h^{1/2}\normGh{\drho\normal}{}{}\normO{\miteta}{}$ and
$|\miT{9}{u,h}|\lesssim Rh^{1/2}\normGh{\dsigma}{}{}\normO{\miteta}{}$.
Considering \eqref{Dargc} and \eqref{EQlx}, we obtain 
$$
|\miT{11}{u,h}|\lesssim Rh\tau\normga{\eu-\eug}{}\normO{\miteta}{}.
$$

Finally, by \eqref{Dargc}, \eqref{EQlx} and discrete trace inequality (Lemma \ref{auxlemTr}) we obtain
$
|\miT{7}{u,h}|\lesssim R h^{1/2}\normDh{\erho}{}\normO{\miteta}{}.
$
Then, by the definition of $|||(\esigma,\eu-\eug,\gtilde-\gtildeh)|||$, the fact 
$
\normGh{\dsigma\normal}{}{}\lesssim  h^{1/2}\normDh{\dsigma}{}$,$\normGh{\du}{}{}\lesssim  h^{1/2}\normDh{\du}{}$, the result in Theorem \ref{EAt3} and recalling the definition of $\Teta$ (cf. \eqref{Teta}), we obtain

\begin{align*}  
|\miT{}{u,h}|\lesssim  &  \left((Rh)^{2}\hspace{-0.05cm}+\hspace{-0.05cm}Rh\hspace{-0.05cm}+\hspace{-0.05cm}(Rh)^{1/2}+\hspace{-0.05cm}Rh\tau\hspace{-0.05cm}+\hspace{-0.05cm}Rh^{1/2}\right) 
\bignorm \normO{\miteta}{}\\
&+  R h^{1/2}\normDh{\erho}{}\normO{\miteta}{}
+ \Big( (Rh)^{1/2}+ Rh\Big)\Teta\normO{\miteta}{}.
\end{align*}
The results follows noticing that $R$ and $h$ are bounded above.

\end{proof}

\subsection{Conclusion of the proof of estimate (\ref{Theo1b}) in Theorem \ref{EAt3}}

\begin{proof}Taking $\miteta=\eu$ in Proposition \ref{DAp1}, we can write 
\begin{align*}
\normDh{\eu}{2}=  &\ipt{\miA\esigma}{\dpsi}+\ipt{\erho}{\dpsi}+\ipt{\esigma}{\dxi}-\ipt{\dsigma}{\piA\mixi}-\ipt{\miA\dsigma}{\pivD\mipsi}
+\ipt{\drho}{\pivD\mipsi}+\miT{}{u,h}\\
= &\ipt{\miA\esigma}{\dpsi}+\ipt{\erho}{\dpsi}+\ipt{\esigma}{\dxi}+\ipt{\dsigma}{\dxi}+\ipt{\miA\dsigma}{\dpsi}\\
&+\ipt{\drho}{\dpsi}
-\ipt{\dsigma}{\mixi}-\ipt{\miA\dsigma}{\mipsi}-\ipt{\drho}{\mipsi}+\miT{}{u,h}.
\end{align*}
Using equation \eqref{DAeqb}, and the fact that $\drho$ is antisymmetric and $\mipsi$ is symmetric, we have $\ipt{\drho}{\mipsi}=0.$ Next, note that 
$$\ipt{\miA\dsigma}{\mipsi}+\ipt{\dsigma}{\mixi}=\ipt{\dsigma}{\miA\mipsi+\mixi}=\ipt{\dsigma}{\nabla\miphi}.$$
Then, by the property \eqref{Proa} with $\miv := \pcero\nabla\miphi$ (since $k\geq 1$), we have  $\ipt{\dsigma}{\pcero\nabla\miphi}=0$. Here, $\pcero$ is the $L^{2}$ projection onto $\pcero(K)$ on each $K \in \Trian$, then 
\begin{eqnarray*}
\normDh{\eu}{2} &=& \ipt{\miA\esigma}{\dpsi}+\ipt{\erho}{\dpsi}+\ipt{\esigma}{\dxi}+\ipt{\dsigma}{\dxi}
+\ipt{\miA\dsigma}{\dpsi}+\ipt{\drho}{\dpsi}-\ipt{\dsigma}{\nabla\miphi}+\miT{}{u,h}\\
&=&\ipt{\miA\esigma}{\dpsi}+\ipt{\erho}{\dpsi}+\ipt{\esigma}{\dxi}+\ipt{\dsigma}{\dxi}\\
&&+\ipt{\miA\dsigma}{\dpsi}+\ipt{\drho}{\dpsi}-\ipt{\dsigma}{\nabla\miphi-\pcero\nabla\miphi}+\miT{}{u,h}.
\end{eqnarray*}
Applying the Cauchy-Schwarz inequality, we obtain
\begin{align*}
\normDh{\eu}{2}\lesssim& \left( \normDh{\esigma}{}+\normDh{\erho}{}+\normDh{\dsigma}{}+\normDh{\drho}{}\right)
\left(\normO{\dpsi}{}+\normO{\dxi}{}+\normO{\nabla\miphi-P_{0}\nabla\miphi}{} \right)+|\miT{}{u,h}|. 
\end{align*}
we note that, by \eqref{EProa}
$ \normO{\dxi}{}\lesssim h|\mixi|_{H^{1}(\Omega)}$ and $\normO{\dpsi}{}\lesssim h|\mipsi|_{H^{1}(\Omega)}$.
Then, considering \eqref{DAeqd} and using Lemma \ref{DAl2} with $\miteta=\eu$, we have
\begin{align*}
\normDh{\eu}{2}\lesssim & h\left( \normDh{\esigma}{}+\normDh{\erho}{}+\normDh{\dsigma}{}+\normDh{\drho}{}\right)(|\mipsi|_{H^{1}(\Omega)}+|\mixi|_{H^{1}(\Omega)})+|\miT{}{u,h}|\\
\lesssim  & h\left( \normDh{\esigma}{}\hspace{-0.1cm}+\hspace{-0.05cm}\normDh{\erho}{}\hspace{-0.05cm}+\hspace{-0.05cm}\normDh{\dsigma}{}\hspace{-0.05cm}+\hspace{-0.05cm}\normDh{\drho}{}\right)\normDh{\eu}{}
+  (Rh)^{1/2} \left(1+\tau\right) 
\bignorm \normDh{\eu}{}\\
& +   h^{1/2} R\normDh{\erho}{}\normDh{\eu}{}
+\Teta\normDh{\eu}{}.
\end{align*}

Finally by Theorem \ref{EAt3} and recalling the definition of $\Teta$ (cf. \eqref{Teta}), we obtain the result in (\ref{Theo1b}). Moreover, if $k\geq 1$, the estimate of $\norm{\eug}{h}{}$ follows from standard arguments in HDG (see \cite{diffusion}, for instance).
\end{proof}

\section{Numerical experiments}\label{ChapNE}
In this section we present numerical experiments for HDG method \eqref{sq} in the two-dimensional case. For all the computations we consider the spaces specified in \eqref{spaces} with $k \in \lbrace 1,2,3 \rbrace $ and  the exact solution $\miuc=(u_1,u_2)^t$, with $u_1= \sin(\pi x)\cos(\pi y)$ and $u_2=\cos(\pi x)\sin(\pi y)$. We fix $E=1$ and take $\nu \in \lbrace 0.3,0.4999 \rbrace$ in order to see the effect of the nearly incompressible case.  The stabilization parameter $\tau$ is set to be one. According to Corollary \ref{EAt3cor}, the theoretical order of convergence for the $L^{2}$-norm of the errors in $\miuc$, $\misigmac$ and $\miroc$ is $k+1$, whereas for the numerical trace is $k+2$ if $R\approx h$ and $k+3/2$ if $R\approx 1$.

\paragraph{Example 1.}
We consider the domain as $\Omega := \lbrace (x,y)\in \mathbb{R}^{2}: x^{2}+y^{2}\leq 1 \rbrace$ and the computational domain is constructed by linearly interpolating the boundary of $\Omega$. In this case, $R$ is of order $h$, then the set of Assumptions C is satisfied for $h$ small enough even in the nearly incompressible case. Figures \ref{fig:Ex1} and \ref{fig:Ex1b} display the behavior of the errors when $\nu =0.3$  and $\nu=0.4999$, respectively. We observe that the $L^{2}$-errors of $\miuc$, $\misigmac$ and $\miroc$  behave as Corollary \ref{EAt3cor} predicts, that is, an order of convergence of $k+1$. The approximation $\miug$ converges to the trace of the solution with order $k+1$, which is half a power higher than the one predicted. in addition, we notice that the magnitude of the errors is larger when $\nu=0.4999$, however the rates of convergence are the same as in the case when $\nu=0.3$, indicating that the method is optimal even in the nearly incompressible case.

\begin{figure}[ht!]
  \centering
\includegraphics[width=0.3\textwidth]{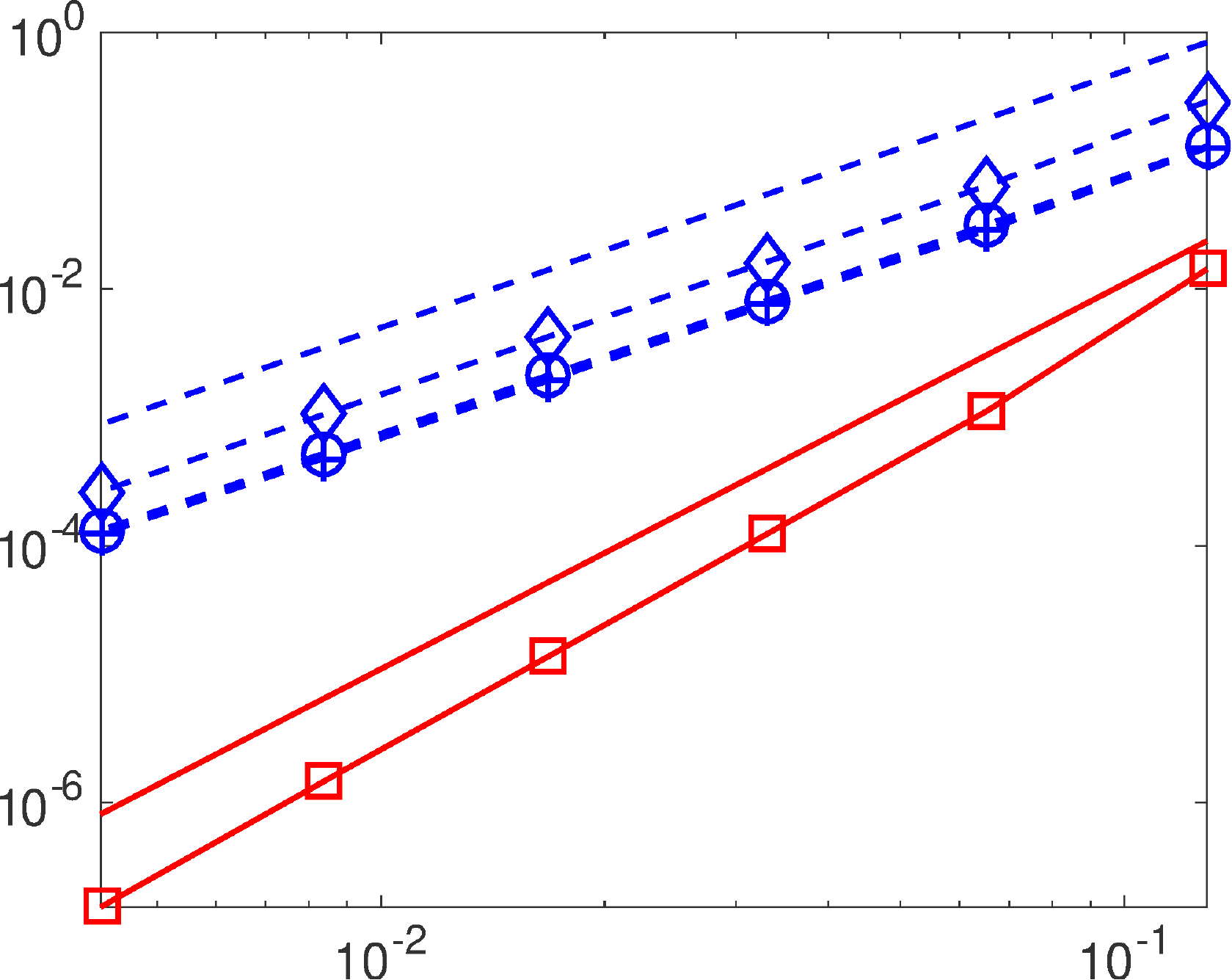}
\includegraphics[width=0.3\textwidth]{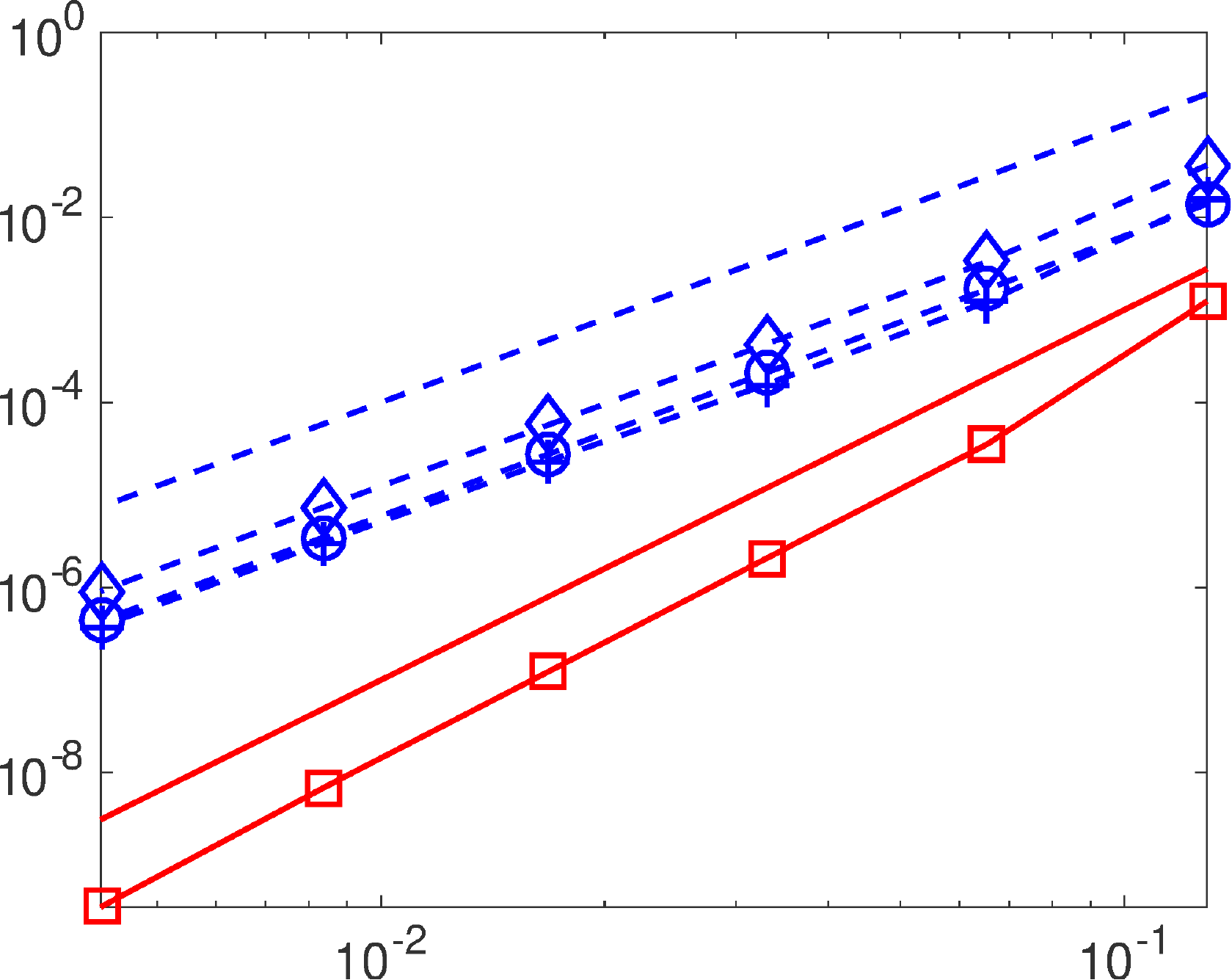}\\
\includegraphics[width=0.3\textwidth]{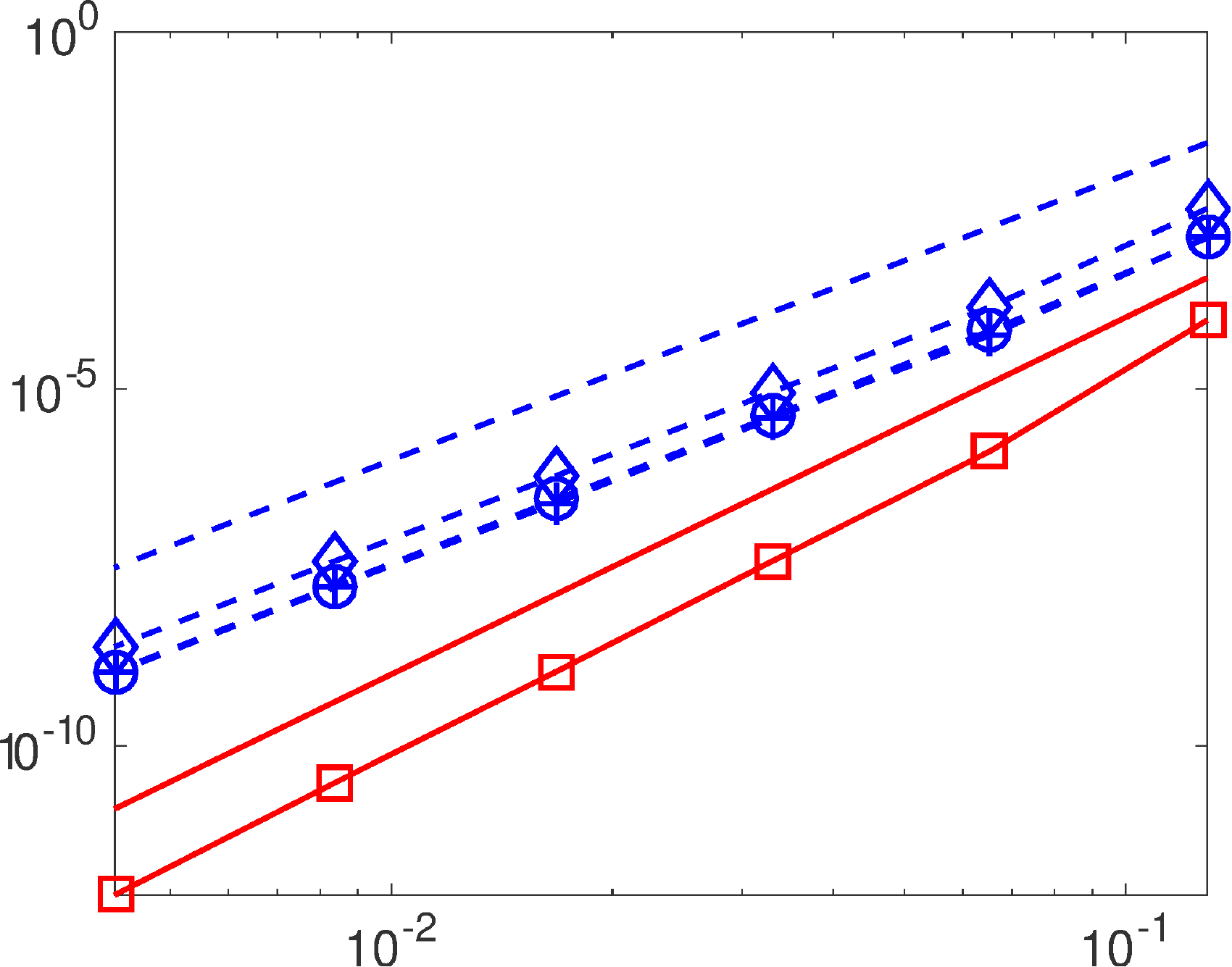}
  \caption{Errors in Example 1 with $\nu = 0.3$, $k=1$ (top-left), $k=2$ (top-right) and $k=3$ (bottom). The ordinates indicates $\normDh{\misigmac-\misigmah}{}$ (dashed-blue line $\diamondsuit$), $\normDh{\miroc-\miroh}{}$ (dashed-blue line $+$), $\normDh{\miuc-\miuh}{}$ (dashed-blue line $\circ$) and $\norm{\proMD\miuc-\miug}{h}{}$ (solid-red line $\square$). The abscissas correspond to $N_{elem}^{-1/2}$. The dashed-blue and solid-red lines indicate the slope $k+1$ and $k+2$, resp.}\label{fig:Ex1}
\end{figure}

\begin{figure}[ht!]
  \centering
\includegraphics[width=0.3\textwidth]{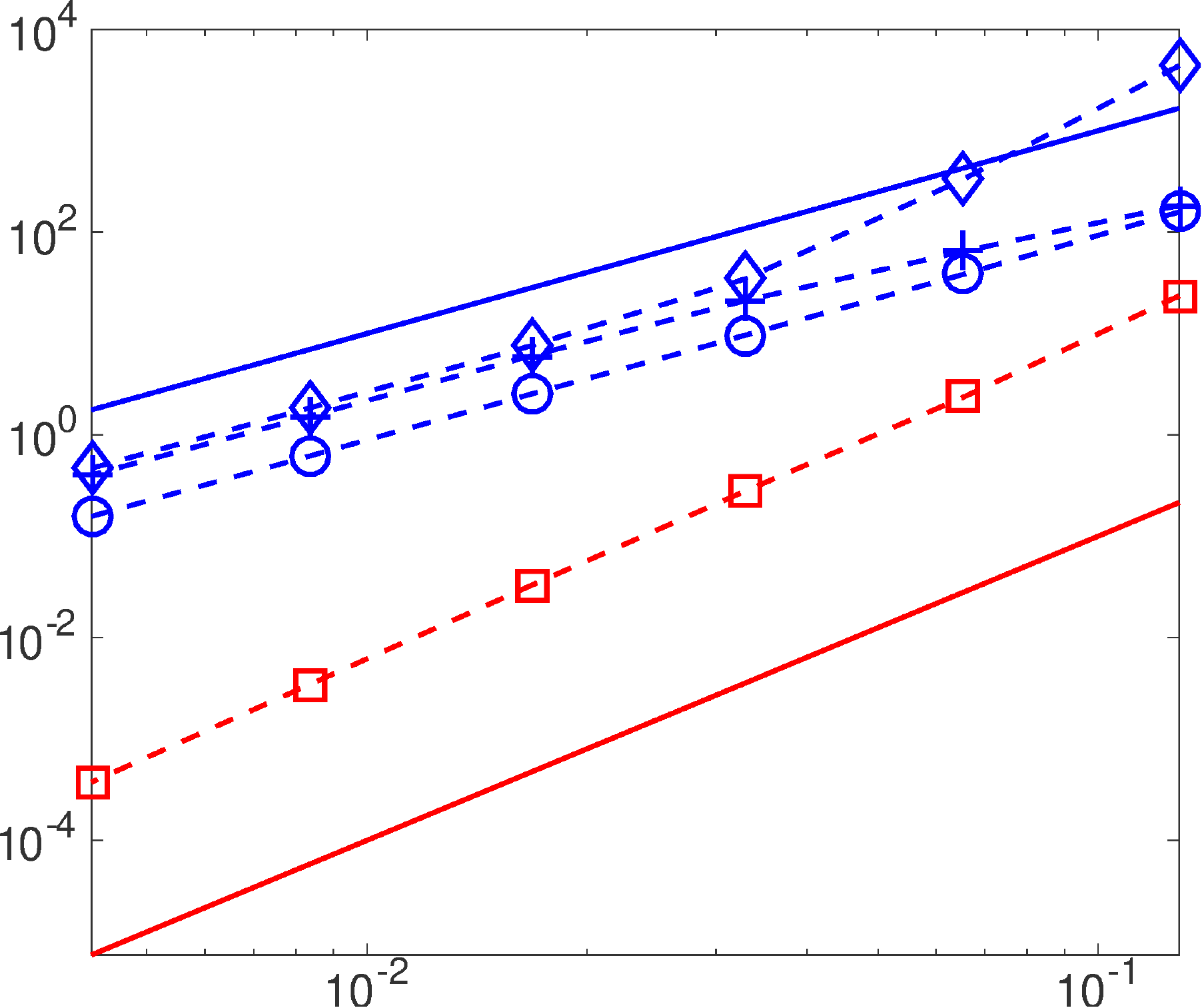}
\includegraphics[width=0.3\textwidth]{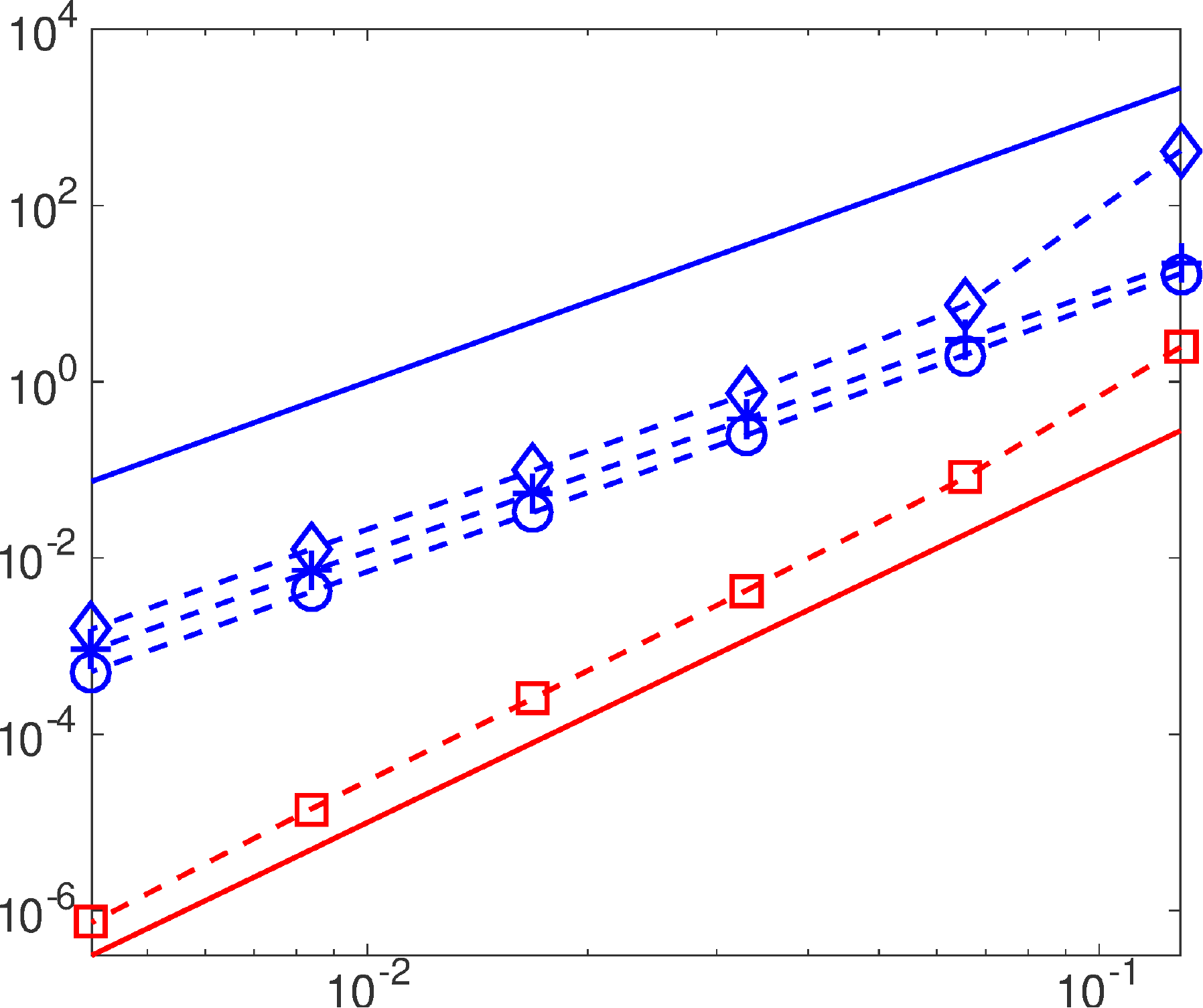}\\
\includegraphics[width=0.3\textwidth]{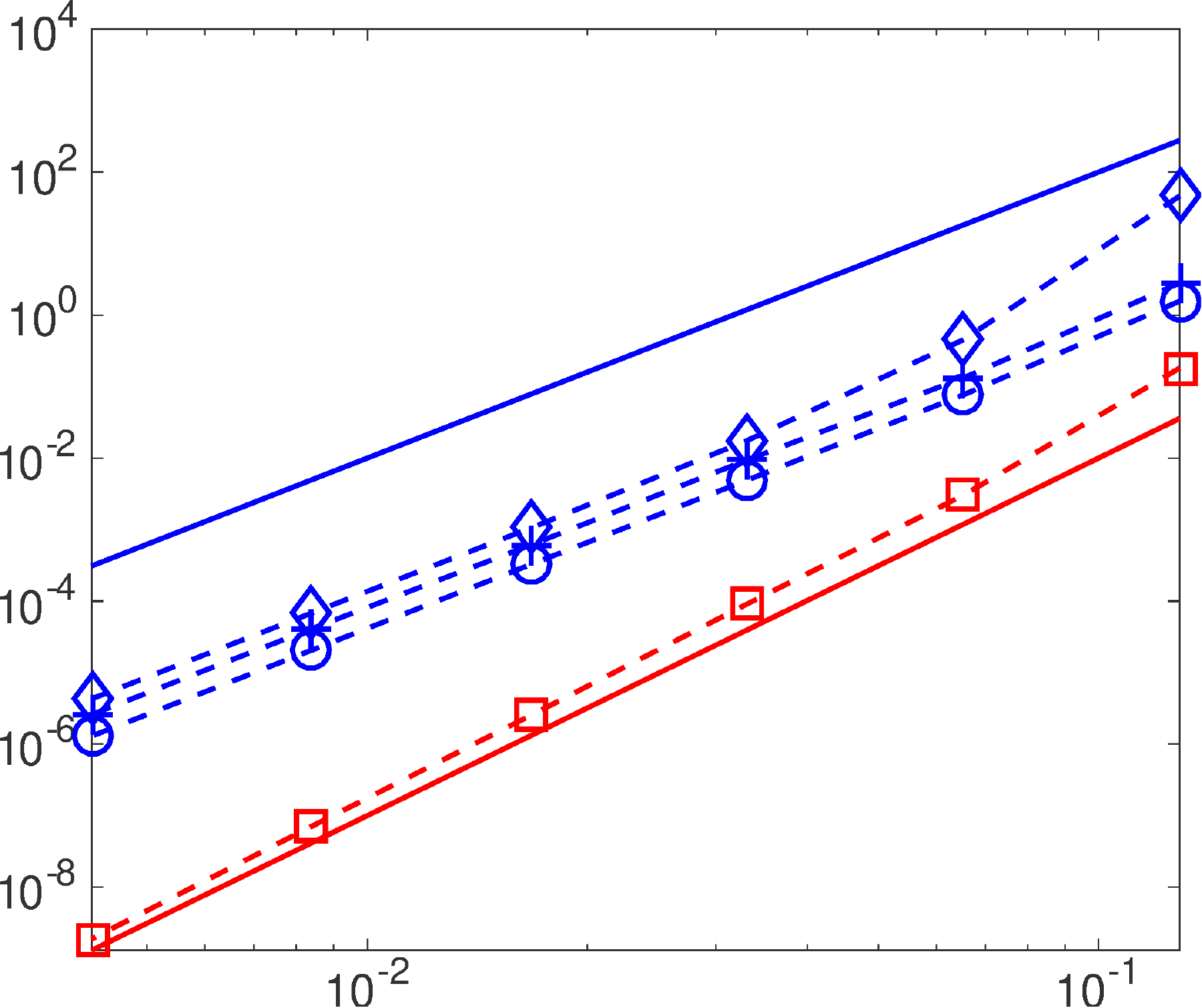}
  \caption{Errors in Example 1 with $\nu = 0.4999$. The legend is the same as in Figure \ref{fig:Ex1}}\label{fig:Ex1b}
\end{figure}

In the following set of examples we  construct the computational domain and transferring path according to the procedure described in Section 2 in \cite{CS}. Roughly speaking, $\Omega$ is immersed in a background mesh and the computational domain $\Omega_h$ is the union of all the elements in the background triangulation completely inside $\Omega$. In addition, the transferring paths are constructed using the algorithm in Section 2.4.1  in \cite{CS} that ensures that  $\ex$ and $\exb$ are as close as possible,  two transferring paths do not intersect each other before terminating at $\Gamma$ and they do not intersect the interior of the computational domain $\Omega_{h}$. For a kidney-shaped domain, Figures \ref{fig:Tpa_a} and \ref{fig:Tpa_b} show the  computational domain (gray) and transferring paths constructed by the procedure just mentioned.  In this case, $dist(\Gamma_h,\Gamma)$ is of order $h$ and hence $R$ is of order one. Then Assumptions C hold for $R$ small enough; however, we cannot  control how small $R$ is.

\paragraph{Example 2.}
We consider the same domain as in Example 1. For $\nu =  0.3 $ and $0.4999 $, we depict in Figures  \ref{fig:Ex2a} and \ref{fig:Ex2b}, respectively,  the behavior of the errors. Even though it look more erratic for some meshes, it seems that asymptotically is decaying with optimal rate.

\begin{figure}[ht!]
  \centering
\includegraphics[width=0.3\textwidth]{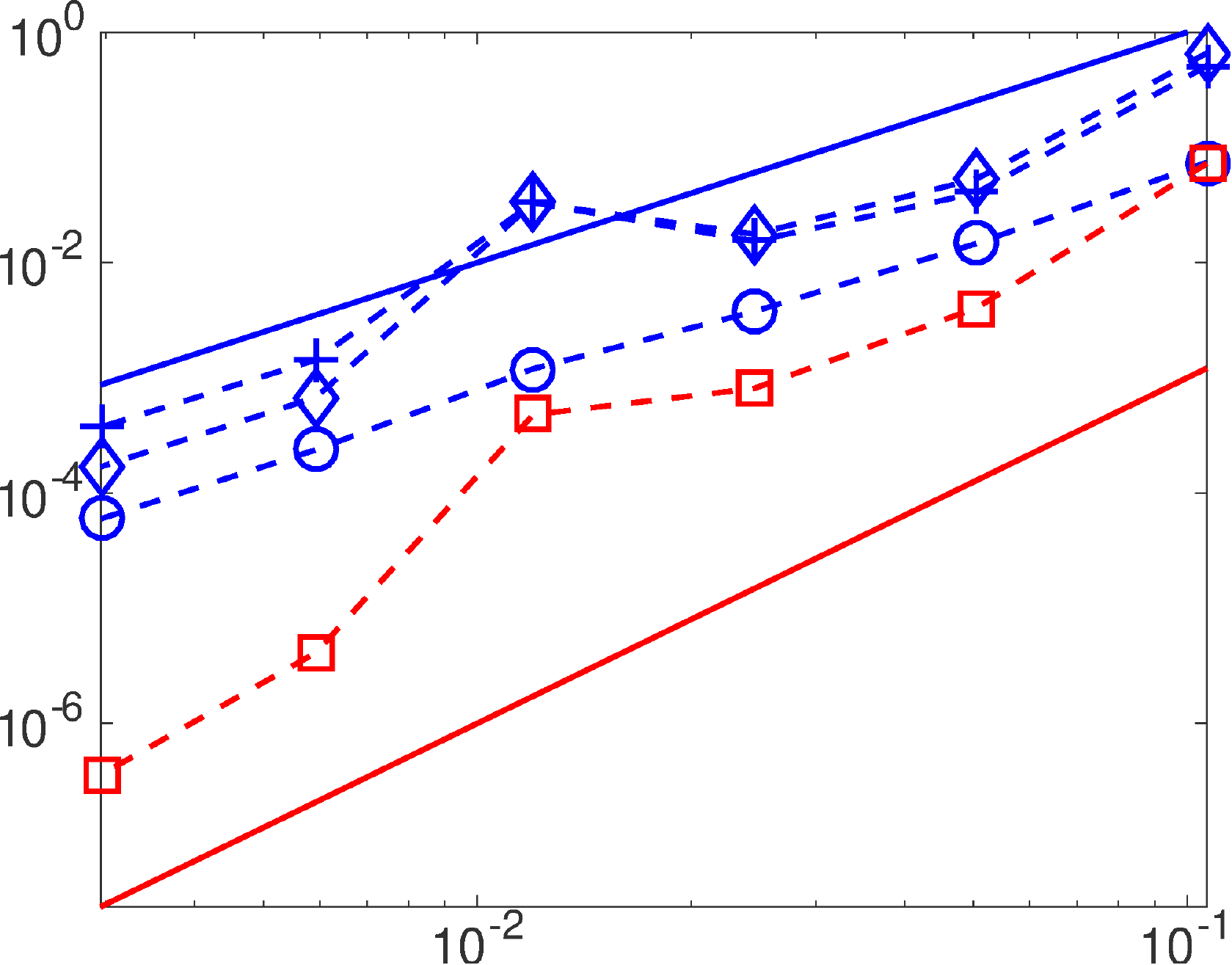}
\includegraphics[width=0.3\textwidth]{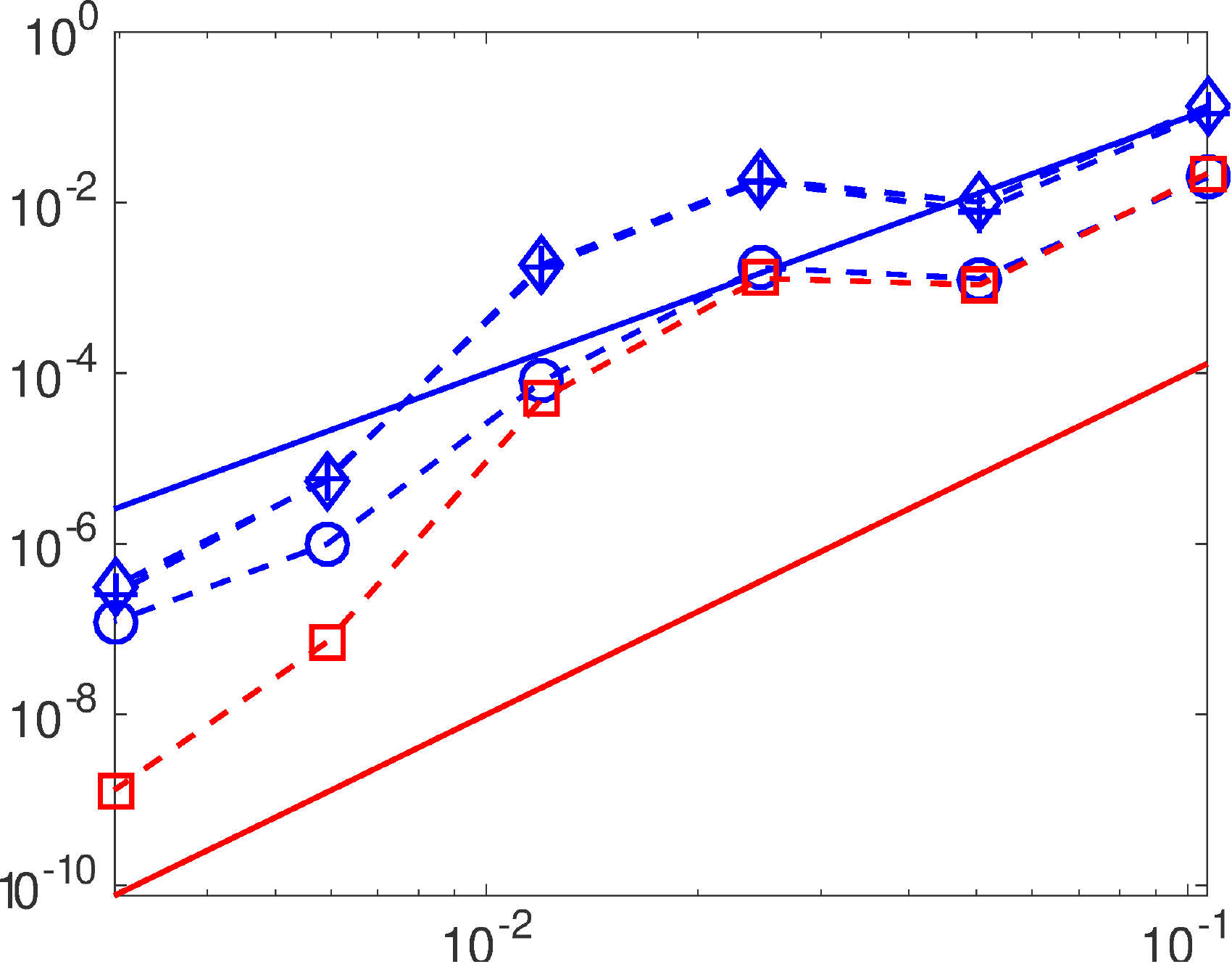}\\
\includegraphics[width=0.3\textwidth]{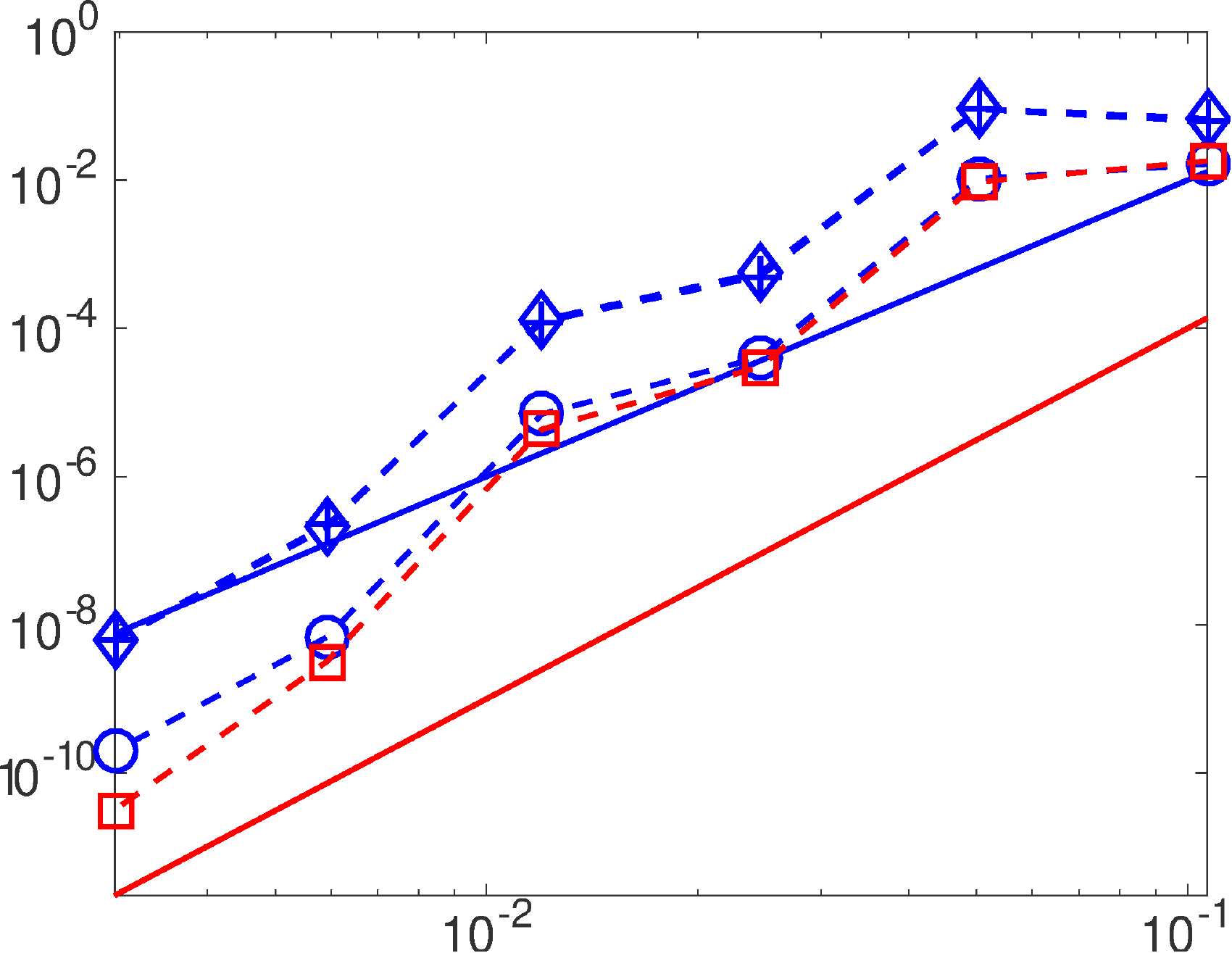}
  \caption{Errors in Example 2 with $\nu = 0.3$. The legend is the same as in Figure \ref{fig:Ex1}.}\label{fig:Ex2a}
\end{figure}

\begin{figure}[ht!]
  \centering
\includegraphics[width=0.3\textwidth]{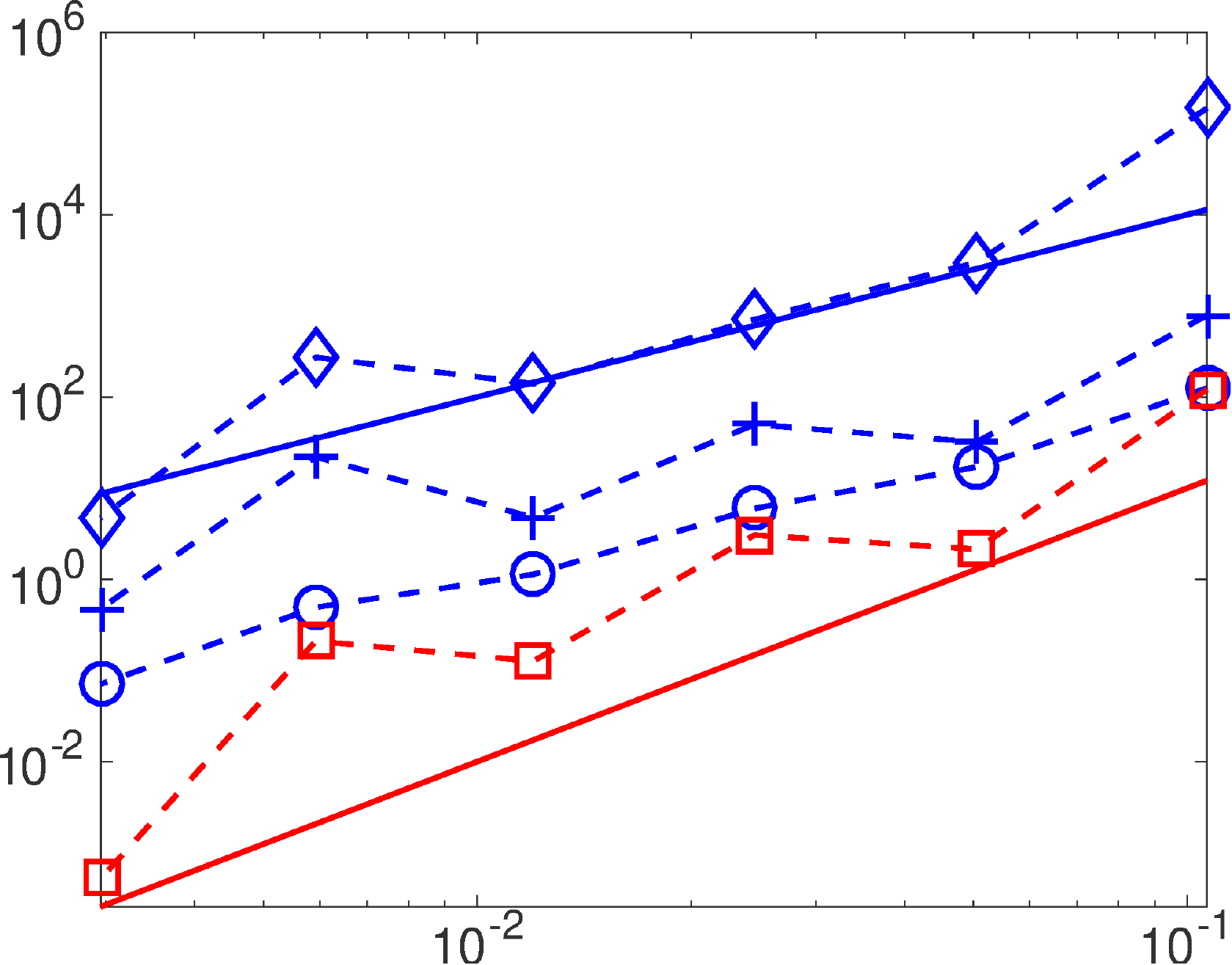}
\includegraphics[width=0.3\textwidth]{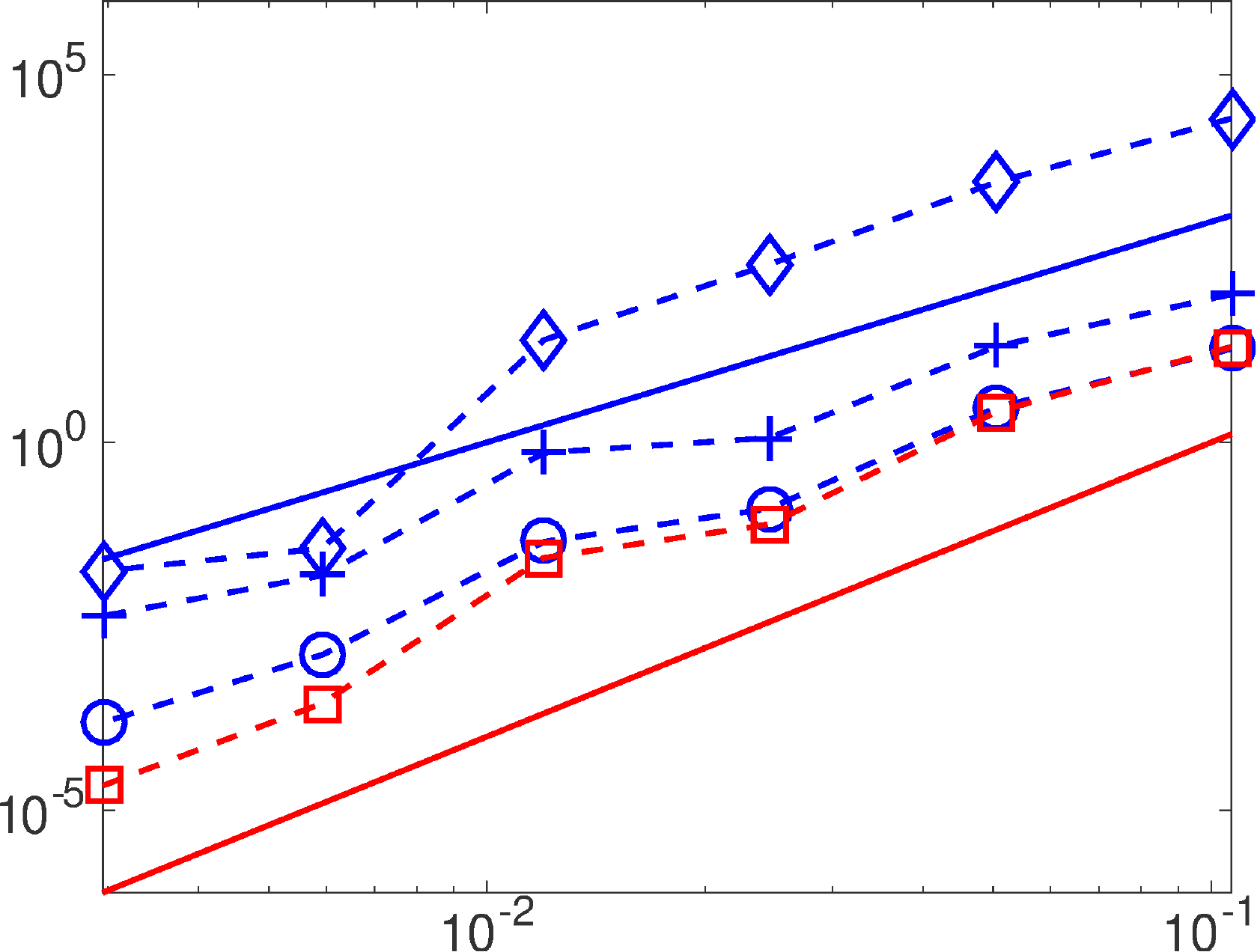}\\
\includegraphics[width=0.3\textwidth]{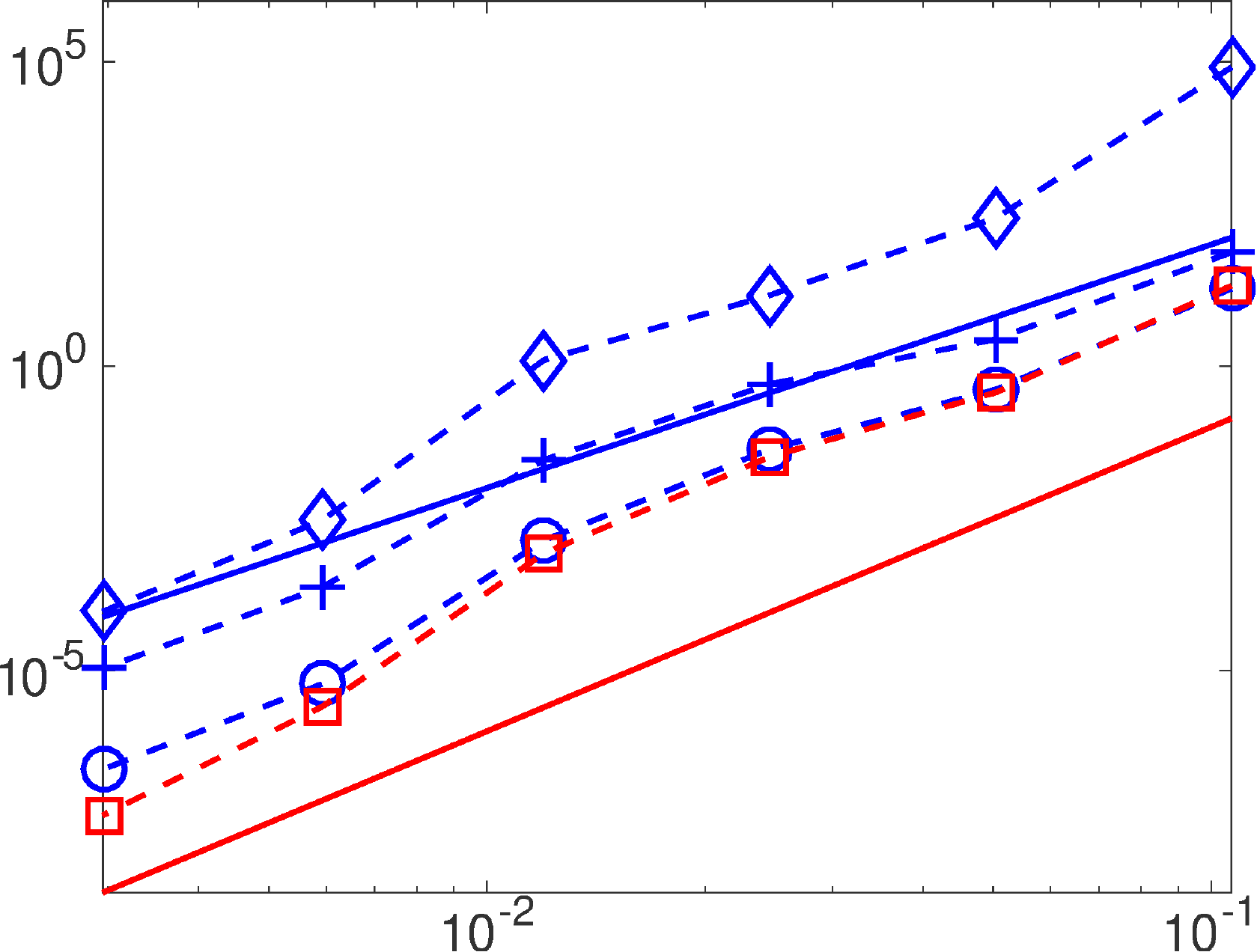}
  \caption{Errors in Example 2 with $\nu = 0.4999$. The legend is the same as in Figure \ref{fig:Ex1}.}\label{fig:Ex2b}
\end{figure}

\paragraph{Example 3.}
We consider level set 
$2\left((x+(1/2))^{2}+y^{2})-x-(1/2)\right)^{2}-\left((x+(1/2)^{2}+y^{2})\right)+0.1=0$ that defines a non-convex domain. In Figures \ref{fig:Ex3a} and \ref{fig:Ex3b} we display the how the errors decays when the meshsize decreases. Similar convulsions as in Example 2 hold.

\begin{figure}[ht!]
  \centering
\includegraphics[width=0.3\textwidth]{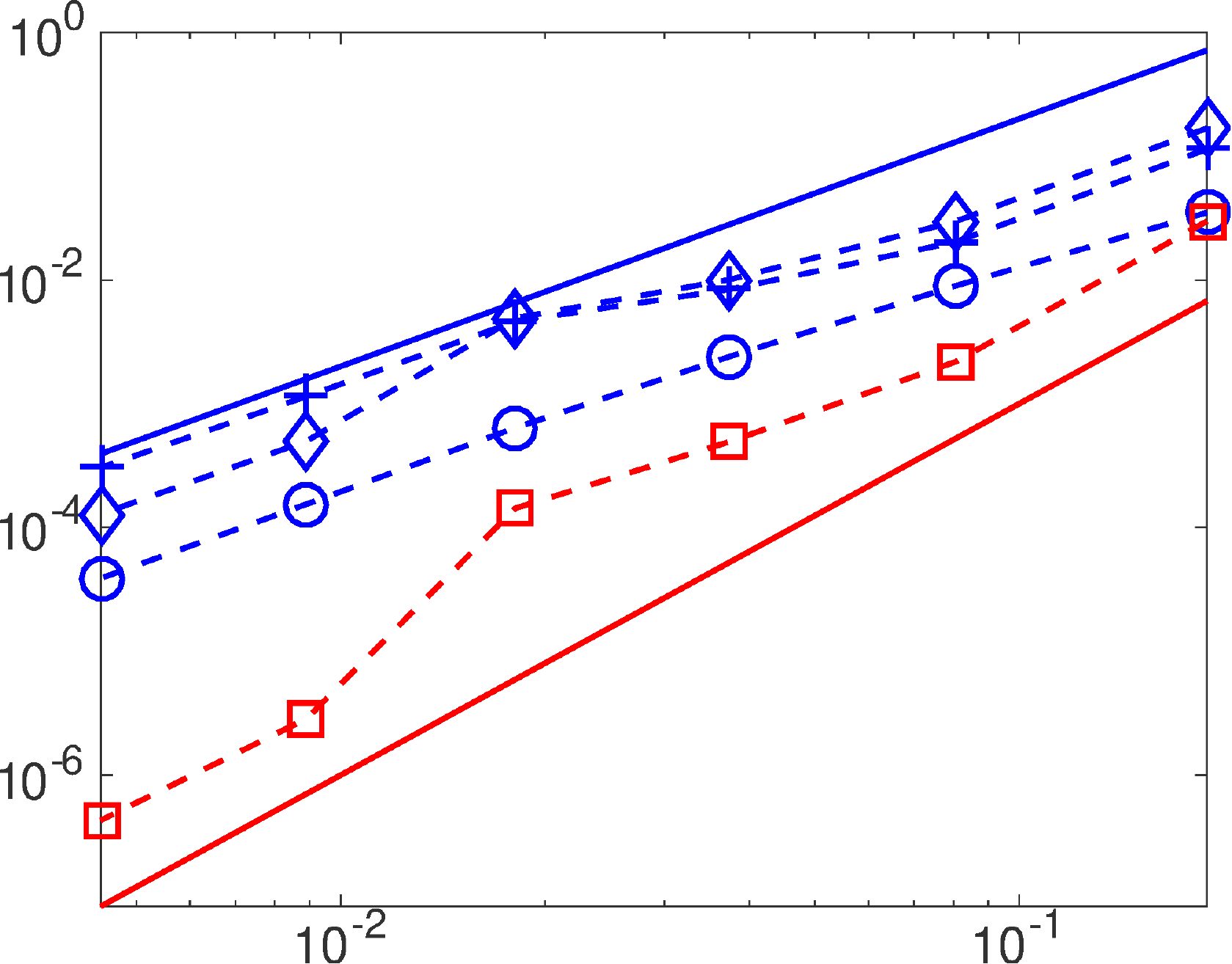}
\includegraphics[width=0.3\textwidth]{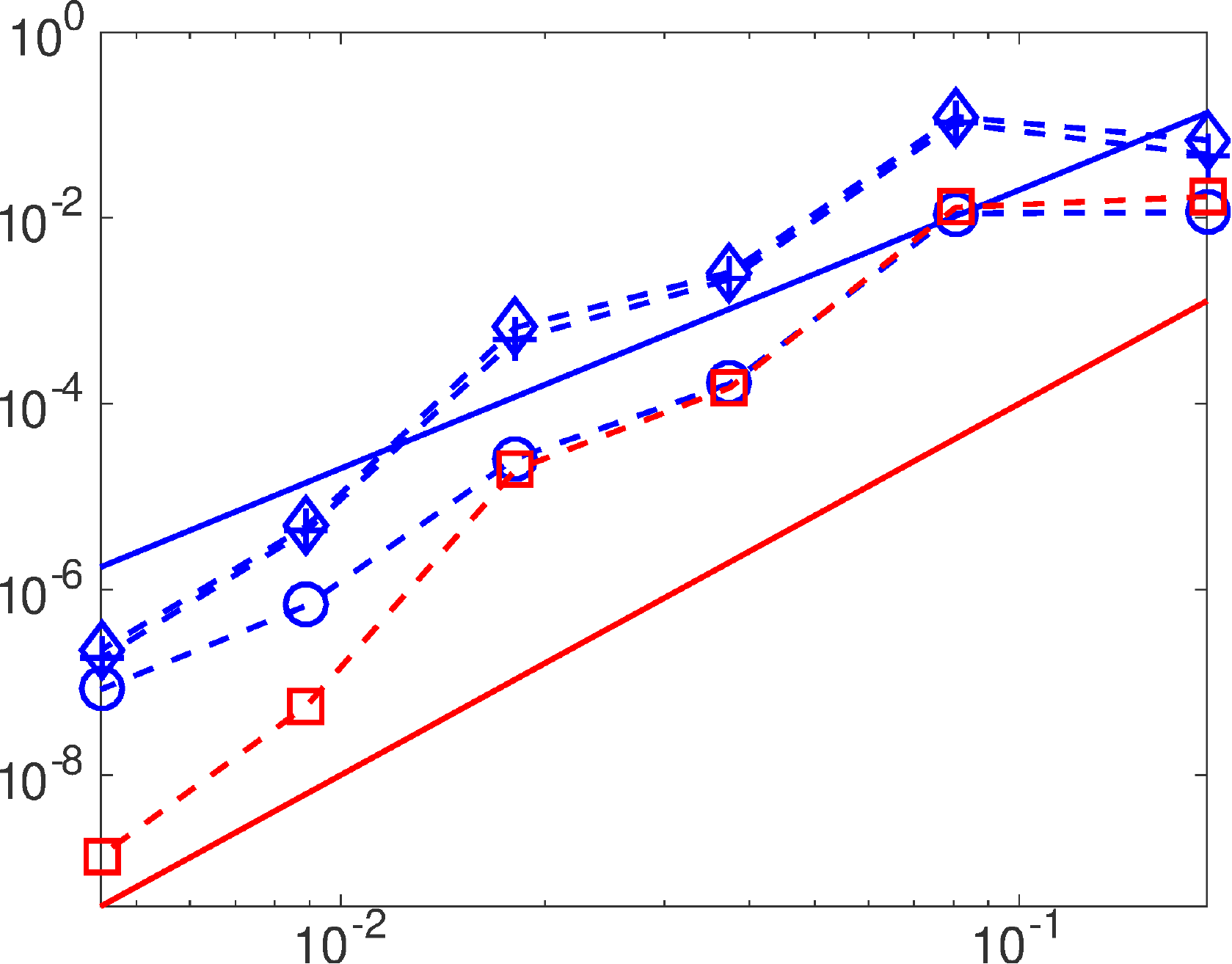}\\
\includegraphics[width=0.3\textwidth]{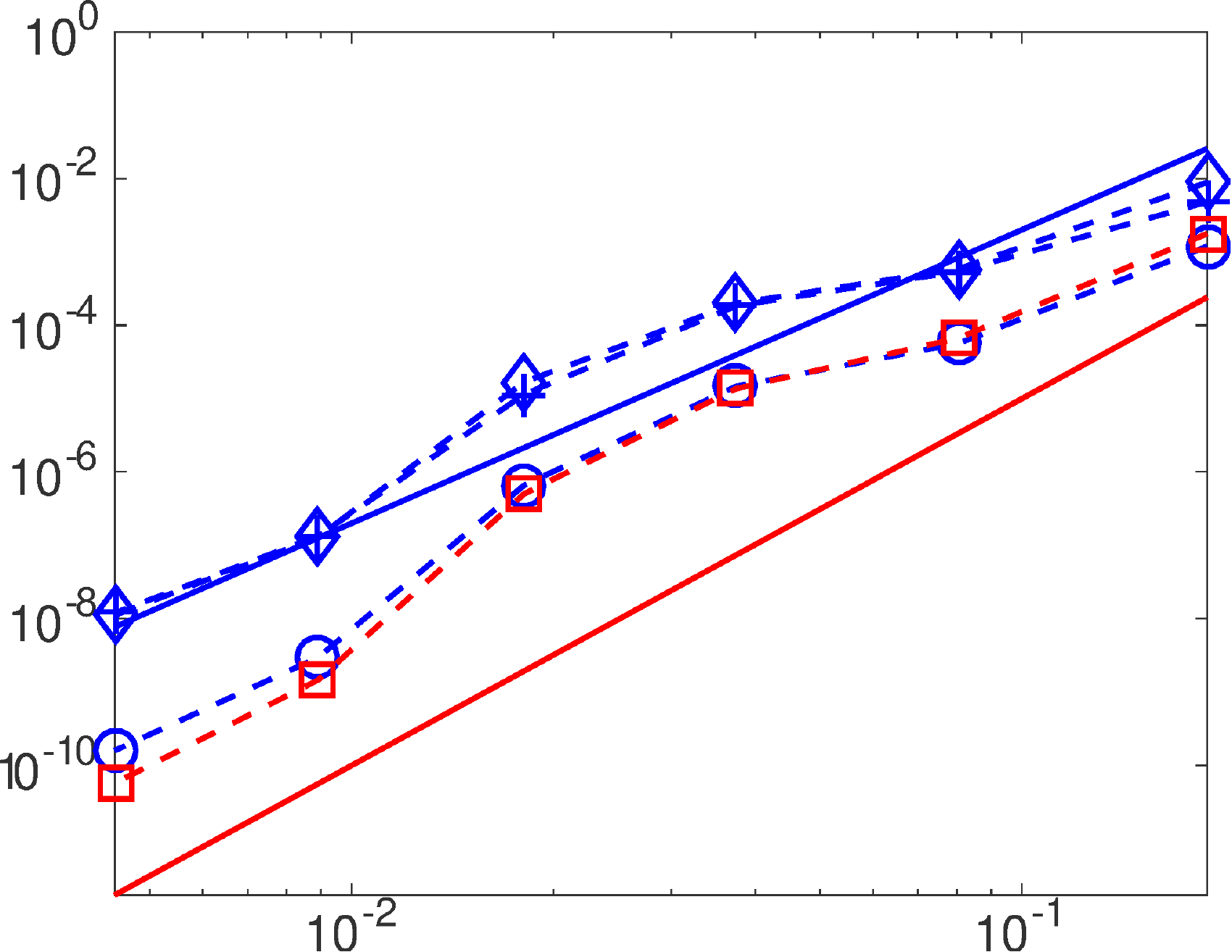}
  \caption{Errors in Example 3 with $\nu = 0.3$. The legend is the same as in Figure \ref{fig:Ex1}.}\label{fig:Ex3a}
\end{figure}

\begin{figure}[ht!]
  \centering
\includegraphics[width=0.3\textwidth]{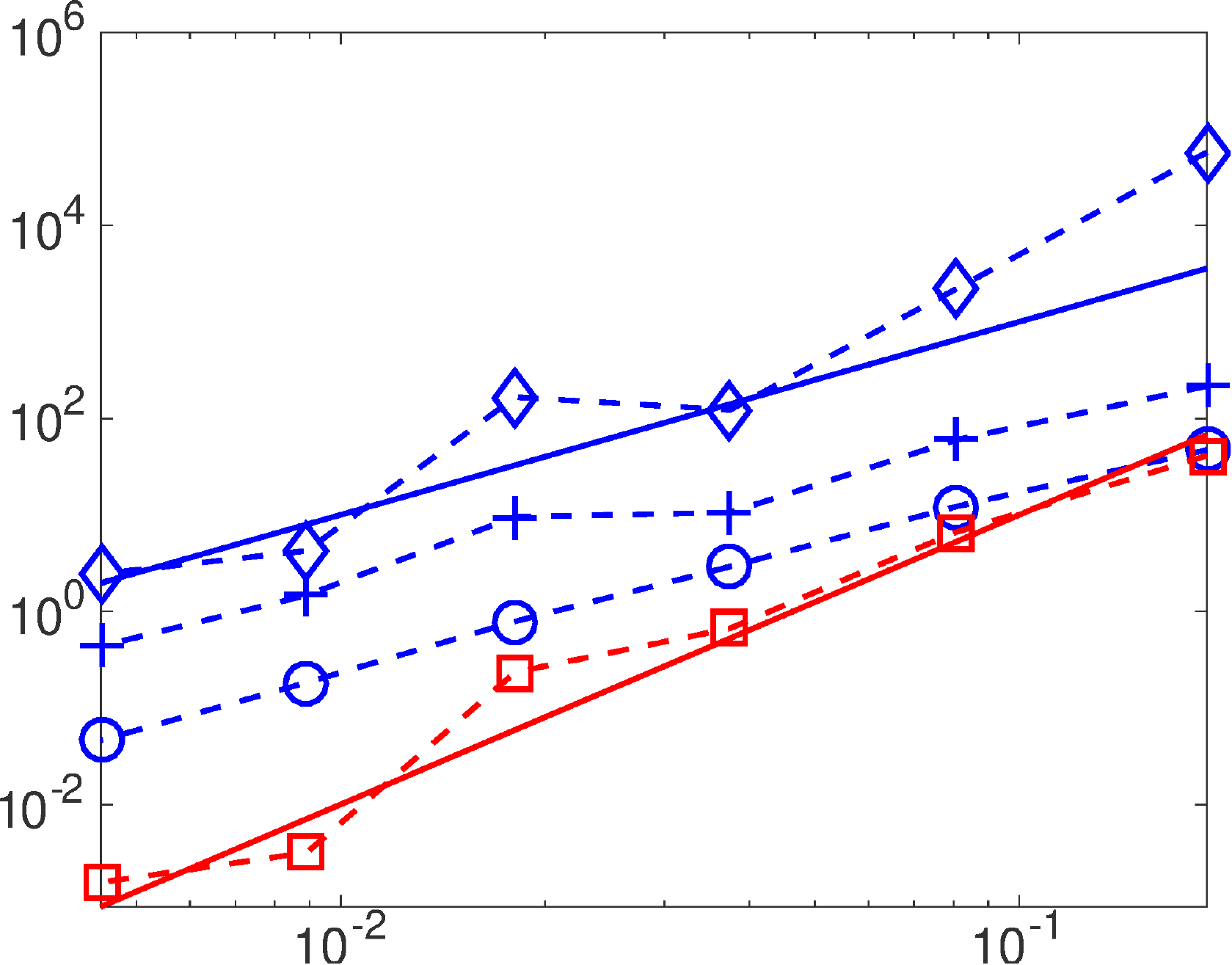}
\includegraphics[width=0.3\textwidth]{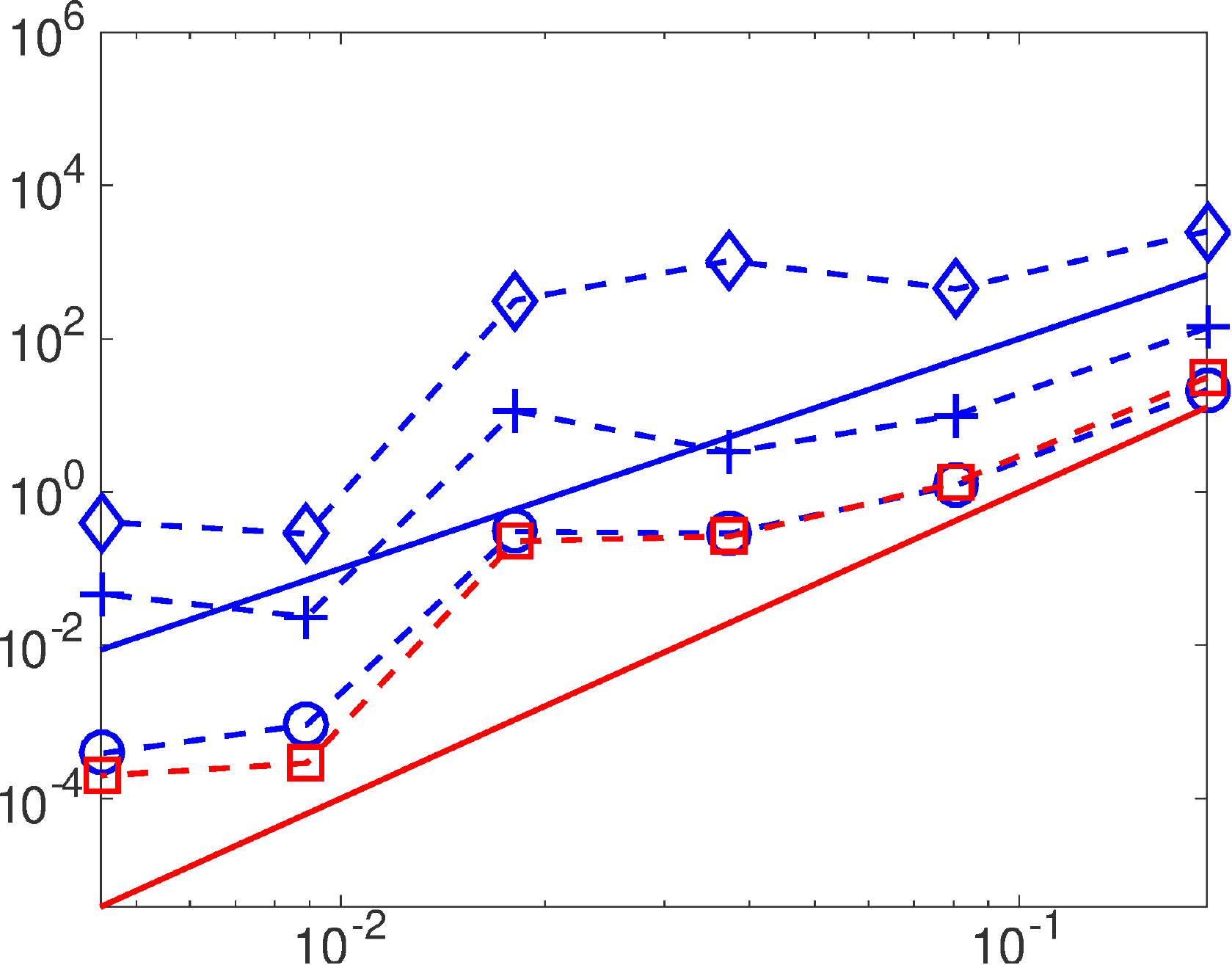}\\
\includegraphics[width=0.3\textwidth]{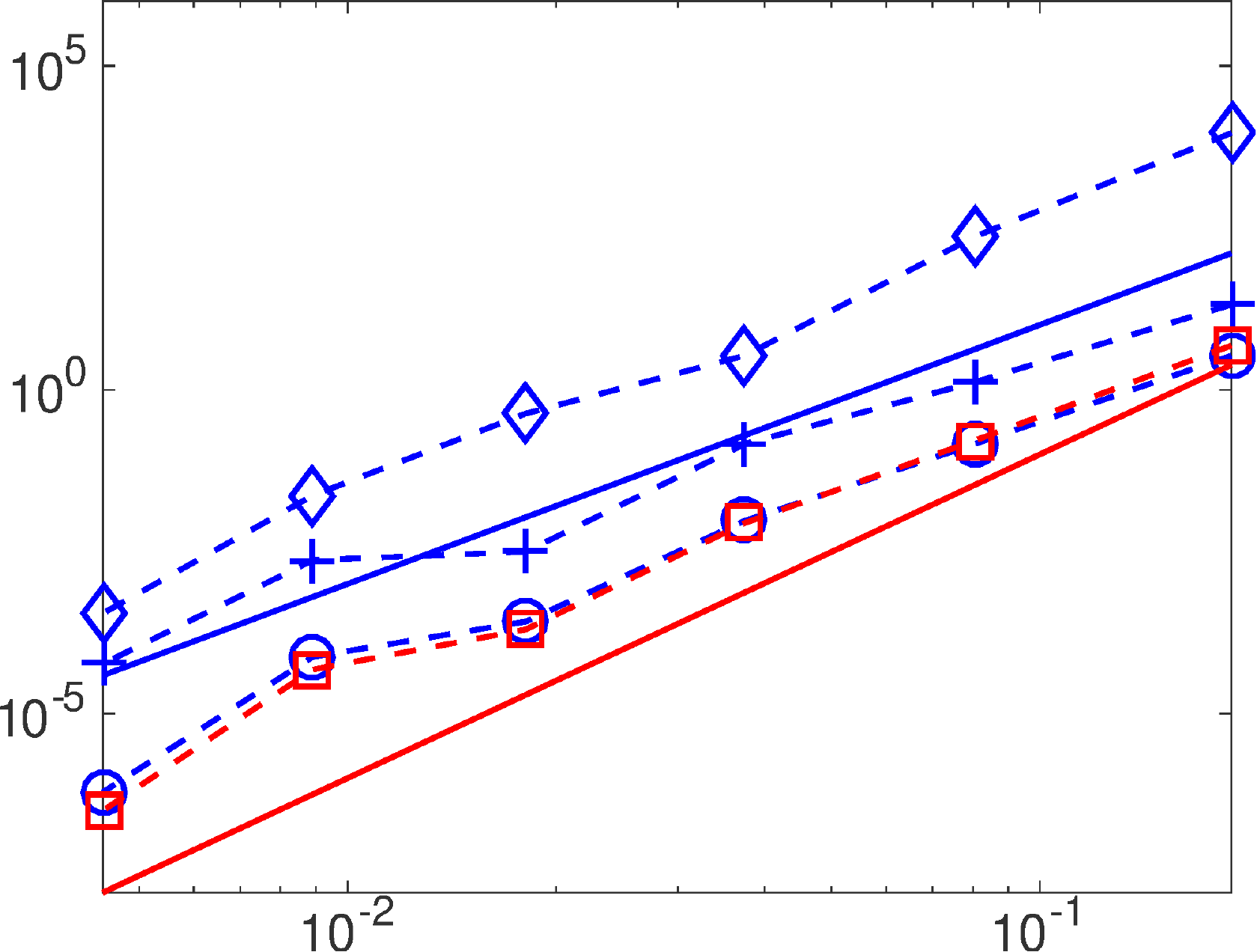}
  \caption{Errors in Example 3 with $\nu = 0.4999$. The legend is the same as in Figure \ref{fig:Ex1}.}\label{fig:Ex3b}
\end{figure}

\section*{Acknowledgements}
This work was supported by ANID--Chile through Fondecyt 1200569 and by Centro de Modelamiento Matemático (CMM), ACE210010 and FB210005, BASAL funds for center of excellence from ANID-Chile

\bibliographystyle{acm}
\bibliography{references}

\begin{thebibliography}{10}

\bibitem{Scovazzi3}
{\sc Atallah, N.~M., Canuto, C., and Scovazzi, G.}
\newblock The shifted boundary method for solid mechanics.
\newblock {\em International Journal for Numerical Methods in Engineering 122},
  20 (2021), 5935--5970.

\bibitem{Dual}
{\sc Bacuta, C., and Bramble, J.~H.}
\newblock Regularity estimates for solutions of the equations of linear
  elasticity in convex plane polygonal domains.
\newblock {\em Zeitschrift f{\"u}r angewandte Mathematik und Physik ZAMP 54}, 5
  (Sep 2003), 874--878.

\bibitem{CaSo2021}
{\sc Camargo, L., and Solano, M.}
\newblock A high order unfitted hdg method for the helmholtz equation with
  first order absorbing boundary condition.
\newblock {\em Preprint 2021-027, Centro de Investigaci\'on en Ingenier\'ia
  Matem\'atica (CI$^{2}$MA), Universidad de Concepci\'on, Chile\/} (2021).

\bibitem{xHDG2}
{\sc Ceren~Gürkan, M.~K., and Fernández-Méndez, S.}
\newblock {e}{X}tended hybridizable discontinuous {G}alerkin with heaviside
  enrichment for heat bimaterial problems.
\newblock {\em Journal of Scientific Computing 72\/} (2017), 542–567.

\bibitem{xHDG1}
{\sc Ceren~Gürkan, Esther Sala-Lardies, M.~K., and Fernández-Méndez, S.}
\newblock {eX}tended hybridizable discontinous {G}alerkin {(X-HDG)} for void
  problems.
\newblock {\em Journal of Scientific Computing 66\/} (2016), 1313–1333.

\bibitem{CoFu2017}
{\sc Cockburn, B., and Fu, G.}
\newblock {Devising superconvergent HDG methods with symmetric approximate
  stresses for linear elasticity by {M}-decompositions}.
\newblock {\em IMA Journal of Numerical Analysis 38}, 2 (06 2017), 566--604.

\bibitem{diffusion}
{\sc Cockburn, B., Gopalakrishnan, J., and Sayas, F.}
\newblock A projection-based error analysis of {HDG} methods.
\newblock {\em Math. Comput. 79}, 271 (2010), 1351--1367.

\bibitem{Cockburn9}
{\sc Cockburn, B., Gupta, D., and Reitich, F.}
\newblock Boundary-conforming discontinuous {G}alerkin methods via extensions
  from subdomains.
\newblock {\em J. Sci. Comput. 42}, 1 (2010), 144--184.

\bibitem{Soon}
{\sc Cockburn, B., Guzm{\'{a}}n, J., Soon, S., and Stolarski, H.~K.}
\newblock An analysis of the embedded discontinuous {G}alerkin method for
  second-order elliptic problems.
\newblock {\em {SIAM} J. Numerical Analysis 47}, 4 (2009), 2686--2707.

\bibitem{CQS}
{\sc Cockburn, B., Qiu, W., and Solano, M.}
\newblock A priori error analysis for {HDG} methods using extensions from
  subdomains to achieve boundary conformity.
\newblock {\em Math. Comput. 83}, 286 (2014), 665--699.

\bibitem{Cockburn10}
{\sc Cockburn, B., Sayas, F., and Solano, M.}
\newblock Coupling at a distance {HDG} and {BEM}.
\newblock {\em {SIAM} J. Scientific Computing 34}, 1 (2012), A28–A47.

\bibitem{Ke}
{\sc Cockburn, B., and Shi, K.}
\newblock {Superconvergent HDG methods for linear elasticity with weakly
  symmetric stresses}.
\newblock {\em IMA Journal of Numerical Analysis 33}, 3 (10 2012), 747--770.

\bibitem{CS}
{\sc Cockburn, B., and Solano, M.}
\newblock Solving {D}irichlet boundary-value problems on curved domains by
  extensions from subdomains.
\newblock {\em {SIAM} J. Scientific Computing 34}, 1 (2012), A28–A47.

\bibitem{CoSol2}
{\sc Cockburn, B., and Solano, M.}
\newblock Solving convection-diffusion problems on curved domains by extensions
  from subdomains.
\newblock {\em J. Sci. Comput. 59}, 2 (2014), 512--543.

\bibitem{DuSA2020}
{\sc Du, S., and Sayas, F.-J.}
\newblock New analytical tools for {HDG} in elasticity, with applications to
  elastodynamics.
\newblock {\em Mathematics of Computations 89\/} (2020), 1745--1782.

\bibitem{FuCoSt2015}
{\sc Fu, G., Cockburn, B., and Stolarski, H.}
\newblock Analysis of an hdg method for linear elasticity.
\newblock {\em International Journal for Numerical Methods in Engineering 102},
  3-4 (2015), 551--575.

\bibitem{Fu}
{\sc Fu, G., Lehrenfeld, C., Linke, A., and Streckenbach, T.}
\newblock Locking-free and gradient-robust {H}(div)-conforming {HDG} methods
  for linear elasticity.
\newblock {\em Journal of Scientific Computing 86}, 39 (2021), 427--454.

\bibitem{GR}
{\sc Girault, V., and Raviart, P.-A.}
\newblock {\em Finite element methods for {N}avier-{S}tokes equations. Theory
  and algorithms}, vol.~5 of {\em Springer Series in Computational
  Mathematics}.
\newblock Springer-Verlag, Berlin, 1986.

\bibitem{Guz}
{\sc Guzm{\'a}n, J.}
\newblock A unified analysis of several mixed methods for elasticity with weak
  stress symmetry.
\newblock {\em Journal of Scientific Computing 44}, 2 (Aug 2010), 156--169.

\bibitem{xHDG3}
{\sc Han, Y., Chen, H., Wang, X.-P., and Xie, X.}
\newblock Extended {HDG} methods for second order elliptic interface problems.
\newblock {\em ArXiv e-prints\/} (2019).
\newblock https://arxiv.org/abs/1910.09769.

\bibitem{han2021interfaceboundaryunfitted}
{\sc Han, Y., Wang, X.-P., and Xie, X.}
\newblock An interface/boundary-unfitted e{X}tended {HDG} method for linear
  elasticity problems.
\newblock {\em ArXiv e-prints\/} (2021).
\newblock https://arxiv.org/abs/2004.06275.

\bibitem{Lew1}
{\sc Lew, A.~J., and Buscaglia, G.~C.}
\newblock A discontinuous-{G}alerkin-based immersed boundary method.
\newblock {\em International Journal for Numerical Methods in Engineering 76},
  4 (2008), 427--454.

\bibitem{Scovazzi1}
{\sc Main, A., and Scovazzi, G.}
\newblock The shifted boundary method for embedded domain computations. {P}art
  {I}: {P}oisson and {S}tokes problems.
\newblock {\em Journal of Computational Physics 372\/} (2018), 972 -- 995.

\bibitem{Scovazzi2}
{\sc Main, A., and Scovazzi, G.}
\newblock The shifted boundary method for embedded domain computations. {P}art
  {II}: Linear advection–diffusion and incompressible {N}avier–{S}tokes
  equations.
\newblock {\em Journal of Computational Physics 372\/} (2018), 996 -- 1026.

\bibitem{QiShSh2018}
{\sc Qiu, W., Shen, J., and Shi, K.}
\newblock {HDG} method for linear elasticity with strong symmetric stresses.
\newblock {\em Mathematics of Computations 87\/} (2018), 69--93.

\bibitem{Patrick}
{\sc Qiu, W., Solano, M., and Vega, P.}
\newblock A high order {HDG} method for curved-interface problems via
  approximations from straight triangulations.
\newblock {\em Journal of Scientific Computing 69}, 3 (2016), 1384--1407.

\bibitem{Lew2}
{\sc Rangarajan, R., Lew, A., and Buscaglia, G.~C.}
\newblock A discontinuous-{G}alerkin-based immersed boundary method with
  non-homogeneous boundary conditions and its application to elasticity.
\newblock {\em Computer Methods in Applied Mechanics and Engineering 198}, 17
  (2009), 1513--1534.

\bibitem{10.1093/imanum/drab059}
{\sc Solano, M., Terrana, S., Nguyen, N.-C., and Peraire, J.}
\newblock {An HDG method for dissimilar meshes}.
\newblock {\em IMA Journal of Numerical Analysis\/} (08 2021).
\newblock drab059.

\bibitem{Vargas}
{\sc Solano, M., and Vargas, F.}
\newblock A high order {HDG} method for {S}tokes flow in curved domains.
\newblock {\em Journal of Scientific Computing 79\/} (2019), 11505–1533.

\bibitem{Vargas2}
{\sc Solano, M., and {Vargas M.}, F.}
\newblock An unfitted {HDG} method for {O}seen equations.
\newblock {\em Journal of Computational and Applied Mathematics 399\/} (2022),
  113721.

\end{thebibliography}

\end{document}